\newcommand{\vc}[1]{\ensuremath{\vcenter{\hbox{#1}}}}
\newcommand{\oururl}{\url{http://lidicky.name/pub/co2/}}
\DeclareMathOperator{\Img}{Im}
\DeclareMathOperator*{\argmax}{arg\,max}
\newtheorem{theo}{Theorem}
\newtheorem{prop}[theo]{Proposition}
\newtheorem{lemma}[theo]{Lemma}
\newtheorem{ques}[theo]{Question}
\newtheorem{corl}[theo]{Corollary}
\newtheorem{conj}[theo]{Conjecture}
\newtheorem{claim}[theo]{Claim}
\theoremstyle{definition}
\newtheorem{defn}[theo]{Definition}
\numberwithin{theo}{section}
\tikzset{unlabeled_vertex/.style={inner sep=1.7pt, outer sep=0pt, circle, fill}} 
\tikzset{labeled_vertex/.style={inner sep=2.2pt, outer sep=0pt, rectangle, fill=yellow, draw=black}} 
\tikzset{edge_color0/.style={color=black,line width=1.2pt,opacity=0.5}} 
\tikzset{edge_color1/.style={color=red,  line width=1.2pt,opacity=1}} 
\tikzset{edge_color2/.style={color=blue, line width=1.2pt,opacity=1}} 
\tikzset{edge_color3/.style={color=green,line width=1.2pt}} 
\tikzset{edge_color4/.style={color=red,  line width=1.2pt,dotted}} 
\tikzset{edge_color5/.style={color=blue, line width=1.2pt,dotted}} 
\tikzset{edge_color6/.style={color=green, line width=1.2pt,dotted}} 
\tikzset{edge_color7/.style={color=orange, line width=1.2pt}} 
\tikzset{edge_color8/.style={color=gray, line width=1.2pt}} 
\tikzset{edge_thin/.style={color=black}} 
\tikzset{edge_hidden/.style={color=black,dotted,opacity=0}} 
\tikzset{vertex_color1/.style={inner sep=1.7pt, outer sep=0pt, draw, circle, fill=red}} 
\tikzset{vertex_color2/.style={inner sep=1.7pt, outer sep=0pt, draw, circle, fill=blue}} 
\tikzset{vertex_color3/.style={inner sep=1.7pt, outer sep=0pt, draw, circle, fill=green}} 
\tikzset{labeled_vertex_color1/.style={inner sep=2.2pt, outer sep=0pt, draw, rectangle, fill=red}} 
\tikzset{labeled_vertex_color2/.style={inner sep=2.2pt, outer sep=0pt, draw, rectangle, fill=blue}} 
\tikzset{labeled_vertex_color3/.style={inner sep=2.2pt, outer sep=0pt, draw, rectangle, fill=green}}
\tikzset{
vtx/.style={inner sep=1.1pt, outer sep=0pt, circle, fill,draw}, 
vtxl/.style={inner sep=1.1pt, outer sep=0pt, rectangle, fill=yellow,draw=black}, 
hyperedge/.style={fill=pink,opacity=0.5,draw=black}, 
}
\newcommand{\Kfourthree}{\vc{
\begin{tikzpicture}
\draw
(0,0) coordinate(1) node[vtx](a){}
(1,0) coordinate(2) node[vtx](b){}
(1,1) coordinate(3) node[vtx](c){}
(0,1) coordinate(4) node[vtx](d){}
;
\draw[hyperedge] (1) to[out=30,in=90] (2) to[out=100,in=260] (3) to[out=260,in=30] (1);
\draw[hyperedge] (1) to[out=90,in=150] (2) to[out=150,in=280] (4) to[out=280,in=90] (1);
\draw[hyperedge] (1) to[out=60,in=200] (3) to[out=200,in=330] (4) to[out=330,in=60,looseness=1.5] (1);
\draw[hyperedge] (2) to[out=120,in=230,,looseness=1.8] (3) to[out=230,in=300,looseness=1.8] (4) to[out=300,in=120,looseness=1.8] (2);
\end{tikzpicture}
}}
\newcommand{\Kfourthreeminus}{\vc{
\begin{tikzpicture}
\draw
(0,0) coordinate(1) node[vtx](a){}
(1,0) coordinate(2) node[vtx](b){}
(1,1) coordinate(3) node[vtx](c){}
(0,1) coordinate(4) node[vtx](d){}
;
\draw[hyperedge] (1) to[out=30,in=90] (2) to[out=100,in=260] (3) to[out=260,in=30] (1);
\draw[hyperedge] (1) to[out=90,in=150] (2) to[out=150,in=280] (4) to[out=280,in=90] (1);
\draw[hyperedge] (1) to[out=60,in=200] (3) to[out=200,in=330] (4) to[out=330,in=60,looseness=1.5] (1);
\end{tikzpicture}
}}
\title{Solving Tur\'an's Tetrahedron Problem for the $\ell_2$-Norm}
\begin{document}

\author{%
  J\'ozsef Balogh \footnote{Department of Mathematics, University of Illinois at Urbana-Champaign, Urbana, Illinois 61801, USA, and Moscow Institute of Physics and Technology, Russian Federation. E-mail: \texttt{jobal@illinois.edu}. Research is partially supported by NSF Grant DMS-1764123, Arnold O. Beckman Research
Award (UIUC Campus Research Board RB 18132), the Langan Scholar Fund (UIUC), and the Simons Fellowship.}
\and Felix Christian Clemen \footnote {Department of Mathematics, University of Illinois at Urbana-Champaign, Urbana, Illinois 61801, USA, E-mail: \texttt{fclemen2@illinois.edu}.}
 \and Bernard Lidick\'{y} \footnote {Iowa State University, Department of Mathematics, Iowa State University, Ames, IA., E-mail: \texttt{ lidicky@} \texttt{iastate.edu}. Research of this author is partially supported by NSF grant DMS-1855653.}
}
\date{\today}
\maketitle
\abstract{Tur\'an's famous tetrahedron problem is to compute the Tur\'an density of the tetrahedron $K_4^3$. This is equivalent to determining the maximum $\ell_1$-norm of the codegree vector of a $K_4^3$-free $n$-vertex $3$-uniform hypergraph.  
We introduce a new way for measuring extremality of hypergraphs and determine asymptotically the extremal function of the tetrahedron in our notion.

The codegree squared sum, $\textup{co}_2(G)$, of a $3$-uniform hypergraph $G$ is the sum of codegrees squared $d(x,y)^2$ over all pairs of vertices $xy$, or in other words, the square of the $\ell_2$-norm of the codegree vector of the pairs of vertices. We define $\textup{exco}_2(n,H)$ to be the maximum $\textup{co}_2(G)$ over all $H$-free $n$-vertex $3$-uniform hypergraphs $G$. 
We use flag algebra computations to determine asymptotically the codegree squared extremal number for $K_4^3$ and $K_5^3$ and additionally prove stability results. 

In particular, we prove that the extremal 
$K_4^3$-free hypergraphs in $\ell_2$-norm have approximately the same structure as one of the conjectured extremal hypergraphs for Tur\'an's conjecture. Further, we prove several general properties about $\textup{exco}_2(n,H)$ including the existence of a scaled limit, blow-up invariance and a supersaturation result.}
\section{Introduction}
For a $k$-uniform hypergraph $H$ (shortly $k$-graph), the Tur\'an function (or extremal number) $\textup{ex}(n,H)$ is the maximum number of edges in an $H$-free $n$-vertex $k$-uniform hypergraph. The graph case, $k=2$, is reasonably well-understood. The classical Erd\H{o}s-Stone-Simonovits theorem \cite{ErdosStone,ErdosSimonovits} determines asymptotically the extremal number for graphs with chromatic number at least three. However, for general $k$, the problem of determining the extremal function is much harder and widely open. Despite enormous efforts, our understanding of Tur\'an functions is still limited. Even the extremal function of the \textit{tetrahedron} $K_4^3$, the $3$-graph on $4$ vertices with $4$ edges, is unknown. There are exponentially (in the number of vertices) many conjectured extremal hypergraphs which is believed to be the root of the difficulty of this problem. Brown~\cite{K43brown}, Kostochka~\cite{K43Kostochka}, Fon-der-Flaass~\cite{K43Fonderflaass} and Frohmader~\cite{MR2465761} constructed families of $K_4^3$-free $3$-graphs which they conjectured to be extremal. For an excellent survey on Tur\'an functions of cliques see \cite{MR1341481} by Sidorenko. 

Successively, the upper bound for extremal number of the tetrahedron has been improved by de Caen~\cite{TetrahedronCaen},  Giraud (unpublished, see \cite{ChungLutetrahedron}), Chung and Lu~\cite{ChungLutetrahedron}, and finally Razborov~\cite{RazbarovK43} and Baber~\cite{Baber}, both making use of Razborov's flag algebra approach~\cite{flagsRaz} (see also Baber and Talbot~\cite{BaberTalbot}). Another relevant result towards solving Tur\'an's tetrahedron problem is by Pikhurko~\cite{PikhurkoK43}. Building on a result by Razborov~\cite{RazbarovK43}, Pikhurko~\cite{PikhurkoK43} determined the exact ext\-re\-mal hypergraph when the induced $4$-vertex graph with one edge is forbidden in addition to the tetrahedron.  \par
In this paper we study a different notion of extremality and solve the tetrahedron problem asymptotically for this notion. It is interesting that the extremal 
 $K_4^3$-free hypergraphs in $\ell_2$-norm have approximately the same structure as one of the conjectured extremal hypergraphs for Tur\'an's conjecture. For an integer $n$, denote by $[n]$ the set of the first $n$ integers. Given a set $A$ and an integer $k$, we write $\binom{A}{k}$ for the set of all subsets of $A$ of size $k$. Let $G$ be an $n$-vertex $k$-uniform hypergraph. For $T\subset V(G)$ with $|T|=k-1$ we denote by $d_G(T)$ the \emph{codegree} of $T$, i.e., the number of edges in $G$ containing $T$. If the choice of $G$ is obvious, we will drop the index and just write $d(T)$. The \emph{codegree vector} of $G$ is the vector 
\begin{align*}
X \in \mathbb{Z}^{\binom{V(G)}{k-1}}, \text{ where } X(v_1,v_2,\ldots,v_{k-1}) = d(v_1,v_2,\ldots,v_{k-1})
\end{align*}
for every $\{v_1,v_2,\ldots,v_{k-1}\} \in \binom{V(G)}{k-1}$. The $\ell_1$-norm of the codegree vector, or to put it in other words, the sum of codegrees, is $k$ times the number of edges. Thus, Tur\'an's problem for $k$-graphs is equivalent to the question of finding the maximum $\ell_1$-norm for the codegree vector of $H$-free $k$-graphs. We propose to study this maximum with respect to other norms. A particular interesting case seems to be the $\ell_2$-norm of the codegree vector. We will refer to the square of the $\ell_2$-norm of the codegree vector as the \emph{codegree squared sum} denoted by $\textup{co}_2(G),$
\begin{align*}
\textup{co}_2(G)=\sum_{\substack{T \subset \binom{[n]}{k-1}\\ |T|=k-1} }d_G^2(T). \end{align*}
\begin{ques}
\label{questionco2}
Given a $k$-uniform hypergraph $H$, what is the maximum codegree squared sum a $k$-uniform $H$-free $n$-vertex hypergraph $G$ can have?
\end{ques}

Many different types of extremality in hypergraphs have been studied: 

The most related one is the minimum codegree-threshold. For a given $k$-graph, the \emph{minimum codegree-threshold} is the largest minimum codegree an $n$-vertex $k$-graph can have without containing a copy of $H$. This problem has not even been solved for $H$ being the tetrahedron. For a collection of results on the minimum codegree-threshold see \cite{codegreeconj, LoMark, Mubayicode, codF32falgas,codegreeFalgas, FalgasK4-, Sidorenkocode, MubayiFano, AllanZhao}. 

Reiher, R\"{o}dl and Schacht~\cite{MR3764068,MR3548293} introduced new variants of the Tur\'an density, which ask for the maximum density for which an $H$-free hypergraph with a certain quasirandomness property exists. Roughly speaking, a quasirandomness property is a property which holds for the random hypergraph with high probability. Reiher, R\"{o}dl and Schacht~\cite{MR3548293} determined such a variant for the tetrahedron.

In this paper we solve asymptotically Question~\ref{questionco2} for the tetrahedron. For a family $\mathcal{F}$ of $k$-uniform hypergraphs, we define $\textup{exco}_2(n,\mathcal{F})$ to be the maximum codegree squared sum a $k$-uniform $n$-vertex $\mathcal{F}$-free hypergraph can have, and the \emph{codegree squared density} $\sigma(F)$ to be its scaled limit, i.e.,
\begin{align}
\label{sigmalimit}
    \textup{exco}_2(n,\mathcal{F})= \displaystyle\max_{\substack{G \text{ is an } n \text{-vertex }\\ \mathcal{F} \text{-free} \\ k \text{-uniform hypergraph}}} \textup{co}_2(G) \quad \quad \text{ and } \quad \quad
   \sigma(\mathcal{F})&=\lim\limits_{n \rightarrow \infty} \frac{\textup{exco}_2(n,\mathcal{F})}{\binom{n}{k-1}(n-k+1)^2}.
\end{align}
We will observe in Proposition~\ref{coslim} that the limit in \eqref{sigmalimit} exists. Denote by $K_\ell^3$ the complete $3$-uniform hypergraph on $\ell$ vertices. Our main result is that we determine the codegree squared density asymptotically for $K_4^3$ and $K_5^3$, respectively. 
\begin{theo}
\label{K43K53}
We have 
\begin{align*}
    \sigma(K_4^3)=\frac{1}{3} \quad \quad \text{and} \quad \quad \sigma(K_5^3)=\frac{5}{8}.
\end{align*}
\end{theo}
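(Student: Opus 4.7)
The plan is to establish matching upper and lower bounds for $\sigma(K_4^3)$ and $\sigma(K_5^3)$ separately. The lower bounds come from explicit extremal constructions that I can analyze by direct computation; the upper bounds will come from flag algebra computations.

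For $\sigma(K_4^3) \geq 1/3$ I would use Tur\'an's cyclic three-partite construction: partition $[n]$ into parts $V_1, V_2, V_3$ of size $n/3$ each, and take as edges all triples with one vertex in each part together with all triples $\{v_i, v_i', v_{i+1}\}$ for $v_i, v_i' \in V_i$ and $v_{i+1} \in V_{i+1}$ (indices mod $3$). A case analysis on the profile $(|S \cap V_1|, |S \cap V_2|, |S \cap V_3|)$ of any $4$-set $S$ shows at most three of its four triples are edges, so the construction is $K_4^3$-free. Direct calculation yields $d(u,v) = n/3$ when $u,v$ lie in the same part and $d(u,v) = 2n/3 - 1$ when they lie in different parts, so that
\[
\textup{co}_2(G) = 3\binom{n/3}{2}(n/3)^2 + 3 (n/3)^2 (2n/3)^2 + o(n^4) = \frac{n^4}{6} + o(n^4),
\]
which normalizes to $1/3$. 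For $\sigma(K_5^3) \geq 5/8$ I would take the balanced bipartition $V = A \cup B$ with $|A| = |B| = n/2$ and as edges all triples meeting both sides. Any $5$-set contains three vertices on one side whose induced triple is missing, hence the construction is $K_5^3$-free. In-part pairs have codegree $n/2$ and cross pairs have codegree $n-2$, producing $\textup{co}_2(G) = 2\binom{n/2}{2}(n/2)^2 + (n/2)^2(n-2)^2 = 5n^4/16 + o(n^4)$, i.e.\ $\sigma(K_5^3) \geq 5/8$.

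For the matching upper bounds I would apply Razborov's flag algebra method~\cite{flagsRaz}. The first step is to write the normalized codegree squared sum as a linear functional on $4$-vertex densities: since $d(x,y)^2$ counts, up to lower-order terms, ordered pairs $(u,v) \in V^2$ with $\{x,y,u\}$ and $\{x,y,v\}$ both edges, classifying the induced sub-hypergraph on $\{x,y,u,v\}$ shows $\textup{co}_2(G)/(\binom{n}{2}(n-2)^2) = \sum_F c_F\, p(F,G) + o(1)$, where $F$ ranges over the eleven isomorphism classes of $3$-graphs on four vertices and $c_F$ is an explicit rational coefficient. Forbidding $K_\ell^3$ corresponds to the relation $p(K_\ell^3, G) = 0$ in the flag algebra. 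The upper bound $\sigma(K_\ell^3) \leq c$ then reduces to exhibiting a Cauchy-Schwarz combination of labeled-flag products, with positive semidefinite coefficients indexed by types of small size, whose sum dominates the rescaled objective minus $c$. In practice one solves the resulting SDP numerically with CSDP and rounds the approximate solution to an exact rational certificate.

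The main obstacle will be the flag algebra SDP itself, especially for $K_4^3$: the underlying Tur\'an tetrahedron problem is famously open, and the existence of many conjectural extremal structures in $\ell_1$-norm means the SDP has a subtle optimization landscape. It is precisely because the $\ell_2$ objective distinguishes the cyclic three-partite construction among the conjectural extremizers that a clean rational certificate becomes accessible here, but one still needs to choose types so that Cauchy-Schwarz slack vanishes on the claimed construction and to round the numerical solution faithfully. The computer-assisted verification is expected to be the most technically demanding step, with the reproducible certificate data posted at \oururl. Extracting the accompanying stability statement, namely that the extremal hypergraphs approximately have the cyclic three-partite structure, will additionally require a perturbation analysis of the rounded SDP's near-extremizers.
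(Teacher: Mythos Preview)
Your proposal is correct and follows essentially the same route as the paper: lower bounds from the explicit constructions $C_n$ and $B_n$, and upper bounds via a computer-assisted flag algebra SDP applied to the expression of $\textup{co}_2(G)/\bigl(\binom{n}{2}(n-2)^2\bigr)$ as a linear combination of $4$-vertex subgraph densities (the paper's equation~\eqref{eq:fa}). One small slip: there are five, not eleven, isomorphism classes of $3$-graphs on four vertices (the nonzero coefficients are $\tfrac16,\tfrac12,1$ on the $2$-, $3$-, and $4$-edge graphs respectively), but this does not affect your plan.
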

Denote by $C_n$ the $3$-uniform hypergraph\footnote{This hypergraph is often referred to as Tur\'an's construction.} on $n$ vertices with vertex set $V(C_n)=V_1\cup V_2 \cup V_3$ such that $||V_i|-|V_j||\leq 1$ for $i\neq j$ and edge set 
\begin{align*}
    E(C_n)=\{abc: a\in V_1,b\in V_2, c\in V_3 \} \cup \{abc: a,b\in V_1,c\in V_2 \} \\ \cup \ \{abc: a,b\in V_2,c\in V_3 \} \cup  \{abc: a,b\in V_3,c\in V_1 \}.  
\end{align*}
Further, denote by $B_n$ the balanced, complete, bipartite $3$-uniform hypergraph on $n$ vertices, that is the hypergraph where the vertex set is partitioned into two sets $A,B$ such that $||A|-|B||\leq 1$ and the edge set is the set of triples intersecting both $A$ and $B$. See Figure~\ref{fig:CnBn} for an illustration of $C_n$ and $B_n$.
The $3$-graphs $C_n$ and $B_n$ are among the asymptotically extremal hypergraphs in $\ell_1$-norm for $K_4^3$ and $K_5^3$ respectively. We conjecture that $C_n$ and $B_n$ are the unique extremal hypergraphs in $\ell_2$-norm.

\begin{figure}
\begin{center}
\tikzset{
vtx/.style={inner sep=1.1pt, outer sep=0pt, circle, fill,draw}, 
hyperedge/.style={fill=pink,opacity=0.5,draw=black}, 
vtxBig/.style={inner sep=12pt, outer sep=0pt, circle, fill=white,draw}, 
hyperedge/.style={fill=pink,opacity=0.5,draw=black}, 
}
\vc{
\begin{tikzpicture}[scale=1.4]
\draw (30:0.9) coordinate(x1) node[vtxBig]{};
\draw (150:0.9) coordinate(x2) node[vtxBig]{};
\draw (270:0.9) coordinate(x3) node[vtxBig]{};
\draw
(30:0.8) coordinate(1) 
(150:0.8) coordinate(2) 
(270:0.8) coordinate(3) 
;
\draw[hyperedge] (1) to[out=210,in=330] (2) to[out=330,in=90] (3) to[out=90,in=210] (1);
\foreach \ashift in {30,150,270}{
\draw
(\ashift:0.8)++(\ashift+60:0.1) coordinate(1) 
++(\ashift+60:0.3) coordinate(2) 
(\ashift+120:0.8)++(\ashift+120+90:-0.2) coordinate(3)
;
\draw[hyperedge] (1) to[bend left] (2) to[bend left=5] (3) to[bend left=5] (1);
}
\draw (30:1)  node{$V_1$};
\draw (150:1) node{$V_2$};
\draw (270:1) node{$V_3$};
\draw (0,-1.8) node{$C_n$};
\end{tikzpicture}
}
\hskip 2cm
\vc{
\begin{tikzpicture}[scale=2.0]
\draw (0,0) coordinate(x1) ellipse (0.25cm and 0.7cm);
\draw (1,0) coordinate(x1) ellipse (0.25cm and 0.7cm);
\draw
(0,-0.2) coordinate(1) 
(0,-0.4) coordinate(2) 
(0,0.3) coordinate(3) 
(1,0.2) coordinate(4) 
(1,-0.3) coordinate(5) 
(1,0.4) coordinate(6) 
;
\draw[hyperedge] (1) to[bend left] (2) to[bend left=5] (5) to[bend left=5] (1);
\draw[hyperedge] (4) to[bend left] (6) to[bend left=5] (3) to[bend left=5] (4);
\draw (0,0) node{$A$};
\draw (1,-0) node{$B$};
\draw (0.5,-1) node{$B_n$};
\end{tikzpicture}
}
\end{center}
\caption{Illustration of $C_n$ and $B_n$.}\label{fig:CnBn}
\end{figure}
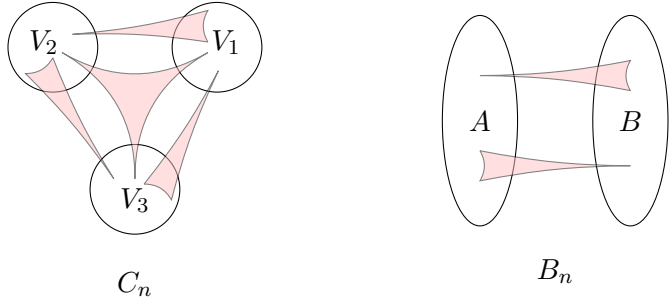

\begin{conj}
  There exists $n_0$ such that for all $n\geq n_0$
\begin{align*}
    \textup{exco}_2(n,K_4^3)= \textup{co}_2(C_n),
\end{align*}
and $C_n$ is the unique $K_4^3$-free $n$-vertex $3$-uniform hypergraph with codegree squared sum equal to $\textup{exco}_2(n,K_4^3)$. 
\end{conj}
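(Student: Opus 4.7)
The plan is to bootstrap from the asymptotic determination $\sigma(K_4^3)=1/3$ (Theorem~\ref{K43K53}) together with the associated stability result promised in the abstract, and then execute a ``stability-to-exact'' argument adapted to the $\ell_2$-norm setting. Let $G$ be an extremal $K_4^3$-free $n$-vertex $3$-graph, so $\textup{co}_2(G)\geq\textup{co}_2(C_n)$. Stability yields a partition $V(G)=V_1\cup V_2\cup V_3$ with $|V_i|=n/3+o(n)$ and such that $G$ and $C_n$ (on this partition) differ in $o(n^3)$ edges. The goal is to upgrade ``$o(n^3)$ edit distance'' to ``equality''.

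The crucial intermediate step is to promote edge-stability to \emph{codegree-stability}: one must show that for every pair $xy$, the codegree $d_G(x,y)$ agrees with the codegree prescribed by $C_n$ up to an $o(n)$ additive error. This is needed because, unlike in the $\ell_1$ (Tur\'an) problem where swapping one edge for another preserves $|E(G)|$, the contribution of adding the edge $xyz$ to $\textup{co}_2(G)$ equals $(2d(x,y)+1)+(2d(x,z)+1)+(2d(y,z)+1)$, which is sensitive to the current codegrees. I would establish codegree-stability by iteratively ``symmetrizing'' atypical vertices: for each $v\in V_i$, compute $\sum_{u\neq v} d(u,v)^2$ and compare it to the value this sum attains when the link of $v$ is exactly the Tur\'an link prescribed by the partition. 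If the link of $v$ deviates on $\Omega(n^2)$ pairs then, using the $K_4^3$-freeness to rule out forbidden local configurations, one can replace the link of $v$ with the Tur\'an link (or move $v$ to a better part) and show strict improvement in $\textup{co}_2(G)$, contradicting extremality.

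Once every vertex has the ``correct'' link up to $o(n)$ errors and all codegrees are tightly pinned near their $C_n$-values, the final step is a local edge-flip argument: for any edge $e\in E(G)\setminus E(C_n)$ or any missing edge $e\in E(C_n)\setminus E(G)$, flipping $e$ changes $\textup{co}_2$ by a quantity of the form $\pm(2d(x,y)+2d(x,z)+2d(y,z)+3)$, and by the codegree-stability step this can be computed exactly and shown to be of the wrong sign for extremality, provided the $K_4^3$-free condition still holds after the flip. Verifying the latter requires a careful case analysis of the four types of ``Tur\'an-edges'' (one transversal type plus three cyclic-pair types) and their interaction with any deviating triple.

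The main obstacle is the codegree-stability step, because at distance $o(n^3)$ from $C_n$ one might still have individual codegrees off by $\Theta(n)$, and the $\ell_2$-norm is genuinely sensitive to this: a single very atypical pair contributes quadratically. Overcoming this will likely require a delicate iterative cleaning, where the partition $V_1\cup V_2\cup V_3$ is refined at each stage so that the number of atypical vertices strictly decreases, and where the $K_4^3$-free hypothesis is used to forbid the local configurations that would otherwise allow a vertex to have a non-Tur\'an link without losing edges. A supersaturation-type lemma as referenced in the abstract should help ensure the iteration terminates before the partition becomes too unbalanced to apply the flip argument.
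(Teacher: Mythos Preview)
The statement you are attempting to prove is stated in the paper as a \emph{conjecture}, not a theorem; the paper contains no proof of it. Immediately after stating it (and the companion conjecture for $K_5^3$), the authors write: ``We believe that existing methods could prove these conjectures, though the potential proofs might be long and technical.'' So there is no proof in the paper to compare your proposal against.

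Your outline is in the right spirit and parallels the stability-to-exact argument the paper \emph{does} carry out for $F_{3,3}$ (Theorem~\ref{F33exact} via Theorem~\ref{F33exactmindeg}): use the stability theorem (here Theorem~\ref{K43stability}) to land near $C_n$, impose a minimum-$q(x)$ condition by iteratively deleting low-degree vertices, and then clean. However, what you have written is a plan, not a proof, and the parts you defer are precisely the hard parts. Two concrete issues:

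\textbf{(1)} The final ``edge-flip'' argument is not sound as stated. In the $F_{3,3}$ case the paper never flips individual edges; instead it shows that any bad edge forces a vertex $a_1$ with $q(a_1)<d(n)$, contradicting the imposed minimum condition. For $K_4^3$ the analogous step is genuinely harder: $C_n$ has two distinct codegree levels ($\approx n/3$ for same-class pairs, $\approx 2n/3$ for cross-class pairs in the cyclic direction), so the quantity $2d(x,y)+2d(x,z)+2d(y,z)+3$ varies with the type of triple, and adding a ``missing'' $C_n$-edge can create a $K_4^3$ with a nearby wrong edge. Your sentence ``Verifying the latter requires a careful case analysis'' is where the entire difficulty lies, and nothing in the proposal indicates how to carry it out.

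\textbf{(2)} The ``codegree-stability'' promotion you describe (replacing a vertex's link by the Tur\'an link) is delicate for $K_4^3$ because, unlike $B_n$, the hypergraph $C_n$ is not symmetric under swapping parts, and there are other $K_4^3$-free constructions (Kostochka's family) with the same edge count whose local links differ from those in $C_n$. The stability theorem rules these out globally, but your local replacement step must avoid re-introducing such structure, and you have not said how.

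In short: the paper leaves this open, and your proposal identifies a plausible strategy but does not close the gap the authors flag as ``long and technical.''
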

Note that Kostochka's~\cite{K43Kostochka} result suggests that in the $\ell_1$-norm there are exponentially many extremal graphs, $C_n$ is one of them. 
\begin{conj}
  There exists $n_0$ such that for all $n\geq n_0$
\begin{align*}
    \textup{exco}_2(n,K_5^3)= \textup{co}_2(B_n),
\end{align*}
and $B_n$ is the unique $K_5^3$-free $n$-vertex $3$-uniform hypergraph with codegree squared sum equal to $\textup{exco}_2(n,K_5^3)$. 
\end{conj}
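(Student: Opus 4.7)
The plan is to bootstrap from the asymptotic result $\sigma(K_5^3)=\tfrac{5}{8}$ of Theorem~\ref{K43K53} together with its companion stability statement to the exact extremality and uniqueness of $B_n$. First I would invoke the stability version: any $K_5^3$-free $n$-vertex $3$-graph $G$ with $\textup{co}_2(G)\ge (\tfrac{5}{8}-\varepsilon)\binom{n}{2}(n-2)^2$ admits a vertex bipartition $V(G)=A\cup B$ with $||A|-|B||\le 1$ such that $|E(G)\,\triangle\,E(B_n)|\le \delta n^3$, where $\delta\to 0$ as $\varepsilon\to 0$. This is the standard output of a flag algebra stability argument and should be extractable from the proof of Theorem~\ref{K43K53}. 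In particular, any extremal $G$ lies in such a $\delta n^3$ neighborhood of $B_n$.

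Next I would classify the vertices of an extremal $G$ as \emph{typical}, those whose link in $G$ matches the link of the corresponding vertex of $B_n$ on all but $o(n^2)$ pairs, or \emph{atypical}; a Markov-type estimate bounds the number of atypical vertices by $o(n)$. Then I would argue vertex by vertex: for each $v$ I compare the contribution to $\textup{co}_2(G)$ coming from the triples containing $v$ against the contribution in the modified hypergraph where those triples are replaced by the ones prescribed by $B_n$ for each of the two possible side assignments of $v$. The change in $\textup{co}_2$ decomposes into a linear term over pairs containing $v$ and a cross term over pairs $\{u,w\}$ with $v\notin\{u,w\}$ whose codegree is affected by the toggled triples at $v$; the typicality of all but $o(n)$ vertices makes the cross term controllable, and a direct calculation shows that one of the two relocations strictly increases $\textup{co}_2$ unless $v$ is already placed and connected as in $B_n$.

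After this clean-up, $G$ is bipartite with parts $A,B$ in the sense that every edge meets both parts; any such $3$-graph is automatically $K_5^3$-free, since among any five vertices at least three lie in the same part and form a non-edge. Completeness follows because any missing triple meeting both parts can be added without creating a $K_5^3$, and its addition strictly increases $\textup{co}_2$, contradicting extremality. Finally, balance of the parts follows from the elementary computation that $\textup{co}_2$ of the complete bipartite $3$-graph with parts of sizes $a$ and $b=n-a$ equals $ab\bigl(ab-\tfrac{n}{2}+(n-2)^2\bigr)$, which for fixed $n$ is strictly maximized at $|a-b|\le 1$, giving $G\cong B_n$.

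The main obstacle will be the vertex-relocation step. Unlike the $\ell_1$-norm setting, $\textup{co}_2$ is quadratic in the edge indicator, so editing triples at a single vertex alters codegrees of pairs that do not contain that vertex, and Zykov-style symmetrization does not transfer verbatim. A likely workaround is a fractional perturbation argument: place weights in $[0,1]$ on the triples, compute the discrete first and second variations of $\textup{co}_2$ at the characteristic vector of $B_n$, and verify that the Hessian restricted to the affine subspace of perturbations preserving the bipartite structure is negative definite modulo the obvious symmetries of $B_n$. Combined with stability, this local strict concavity would pin down the extremal hypergraph to be exactly $B_n$ for all sufficiently large $n$; making the argument quantitative will likely require sharpening the convergence rate in Theorem~\ref{K43K53}, possibly by encoding the exact bound directly into the flag algebra computation used to prove $\sigma(K_5^3)=\tfrac{5}{8}$.
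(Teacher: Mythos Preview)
The statement you are attempting to prove is a \emph{conjecture} in the paper, not a theorem; the paper gives no proof of it. Immediately after stating this conjecture and its $K_4^3$ companion, the authors write: ``We believe that existing methods could prove these conjectures, though the potential proofs might be long and technical.'' So there is nothing to compare your proposal against directly.

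That said, the paper does prove the exact analogue for $F_{3,3}$ (Theorem~\ref{F33exact}), where $B_n$ is again the extremal example, and that proof is the template the authors presumably have in mind. Your outline shares the overall architecture (stability $\rightarrow$ cleaning $\rightarrow$ bipartite $\rightarrow$ complete $\rightarrow$ balanced), and your endgame is fine: once $G$ is bipartite, completeness and balance follow exactly as you say, and your formula $\textup{co}_2=ab\bigl(ab-\tfrac{n}{2}+(n-2)^2\bigr)$ is correct. The divergence is in how to handle the quadratic nature of $\textup{co}_2$ during the cleaning step. You flag this as the main obstacle and propose a fractional Hessian argument. The paper's $F_{3,3}$ proof avoids this entirely: it introduces
\[
q(x)=\sum_{y\neq x}d(x,y)^2+2\sum_{\{v,w\}\in E(L(x))}d(v,w),
\]
which is precisely $\textup{co}_2(G)-\textup{co}_2(G-x)$, imposes the condition $q(x)\ge d(n)$ for all $x$ (achieved by iteratively deleting low-$q$ vertices, since $\textup{co}_2(B_n)-\textup{co}_2(B_{n-1})>d(n)$), and then shows by direct estimation that any vertex participating in a ``bad'' edge has $q$ below the threshold. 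This replaces vertex \emph{relocation} by vertex \emph{deletion}, which is linear in the sense you need and sidesteps the cross terms you worry about. Adapting that machinery to $K_5^3$ would require replacing the $F_{3,3}$-specific structural step (triangles in intersected link graphs) with a $K_5^3$-based argument, which is where the ``long and technical'' caveat presumably bites; your Hessian route is not obviously wrong, but it is both less concrete and further from the method the authors signal would work.
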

We believe that existing methods could prove these conjectures, though the potential proofs might be long and technical. 

In Section~\ref{sec:flaga} we observe that giving upper bounds on $\sigma(H)$ for some $3$-graph $H$ is equivalent to giving upper bounds on a certain linear combination of densities of $4$-vertex subgraphs in large $H$-free graphs, see \eqref{eq:fa}. By now it is a standard technique in the field to use the computer-assisted method of flag algebras to prove such bounds. If one gets an asymptotically tight upper bound from a flag algebra computation, it is typically the case that there is an essentially unique stable extremal example and that one can extract a stability result from the flag algebra proof. This also happens for $K_4^3$ and $K_5^3$. For $\varepsilon>0$, we say a given $n$-vertex $3$-graph $H$ is \textit{$\varepsilon$-near} to an $n$-vertex $3$-graph $G$ if there exists a bijection $\phi : V(G) \rightarrow V(H)$ such that the number of $3$-sets $xyz$ satisfying $xyz\in E(G),\phi(x)\phi(y)\phi(z)\not\in E(H)$ or $xyz\not\in E(G),\phi(x)\phi(y)\phi(z)\in E(H)$ is 
at most $\varepsilon |V(H)|^3$.

\begin{theo}
\label{K43stability}
For every $\varepsilon>0$ there exists $\delta>0$ and $n_0$ such that for every $n > n_0$, if $G$ is a $K_4^3$-free $3$-uniform hypergraph on $n$ vertices with 
\begin{align*}
\textup{co}_2(G)\geq \left(\frac{1}{3}-\delta \right) \frac{n^4}{2}, 
\end{align*}
then $G$ is $\varepsilon$-near to $C_n$.
\end{theo}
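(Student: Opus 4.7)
My plan is to bootstrap stability from the flag algebra proof of $\sigma(K_4^3) \leq 1/3$ underlying Theorem~\ref{K43K53}. As explained in Section~\ref{sec:flaga}, $\textup{co}_2(G)$ admits a natural expression as a linear combination of induced $4$-vertex subhypergraph densities of $G$: the identity $d(x,y)^2 = \#\{(e_1, e_2) : \{x,y\} \subseteq e_1 \cap e_2\}$, after summing over pairs $\{x,y\}$ and grouping by the union $e_1 \cup e_2$, converts $\textup{co}_2(G)/\bigl(\binom{n}{2}(n-2)^2\bigr)$ into such a combination in the limit. Consequently, the bound $\sigma(K_4^3) \leq 1/3$ is furnished by a sum-of-squares certificate in the flag algebra of $K_4^3$-free $3$-graphs bounding this linear combination by $1/3$, and this certificate is the object from which stability will be extracted.

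Given a near-extremal $G$ with $\delta$ sufficiently small, standard slack analysis forces every induced $4$-vertex flag $F$ whose coefficient in the certificate strictly exceeds the value attained on $C_n$ to satisfy $p(F, G) \leq \eta$ for some $\eta = \eta(\delta)$ with $\eta \to 0$ as $\delta \to 0$; likewise every rooted sum-of-squares component vanishing on the limit of $(C_n)$ must be at most $\eta$ on typical roots in $G$. This produces a list of rare $4$-vertex induced subhypergraph patterns, together with near-equalities on the remaining patterns, all matching the values in $C_n$.

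The remaining task is to convert this local density information into the global statement that $G$ is $\varepsilon$-near to $C_n$. Since $C_n$ is a blow-up of a fixed $3$-vertex template, this reduces to producing a tripartition $V(G) = U_1 \cup U_2 \cup U_3$ with $|U_i| = (1/3 + o(1)) n$ whose edge distribution matches the four edge types of $C_n$. I would define an equivalence relation on $V(G)$ by comparing pair-links of vertices in normalized Hamming distance; the density constraints from the previous step force almost every vertex to lie in one of three classes of roughly equal size. Removing a sublinear set of bad vertices and editing at most $o(n^3)$ edges then yields a hypergraph that coincides with $C_n$ under a suitable bijection $\phi$, and choosing $\delta$ small makes the discrepancy below $\varepsilon n^3$.

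The main obstacle will be pinpointing the right witness patterns in the flag algebra certificate: the certificate is machine-generated and involves many flags and many rooted types, so extracting a minimal family of density inequalities that uniquely forces the cyclic blow-up $C_n$ requires careful inspection of the computation hosted at \oururl. A related subtlety is that Tur\'an's tetrahedron problem has exponentially many $\ell_1$-extremal constructions (Kostochka~\cite{K43Kostochka}), so one must verify that in the $\ell_2$-norm the cyclic construction is \emph{strictly} preferred over the others and that this strict separation surfaces explicitly in the certificate; otherwise the tripartition produced in the previous paragraph might a priori be any of the competing Kostochka-type blow-ups rather than $C_n$.
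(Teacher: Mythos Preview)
The paper's proof is quite different and much shorter: it reduces to Pikhurko's existing $\ell_1$-stability theorem (Theorem~\ref{K43stabilityPik}) rather than building the tripartition from the certificate. From the flag computation (Lemma~\ref{K43flag}) only one fact is extracted: in a near-extremal $G$ the $4$-vertex pattern with exactly one edge has density $o(1)$. The induced removal lemma (Theorem~\ref{removallemma}) then yields a $G'$ that is $o(1)$-near to $G$, is $K_4^3$-free, and contains \emph{no} $4$-set spanning exactly one edge. A short calculation combining \eqref{eq:fa}, the bound $P(K_4^{3-},G')\leq 16/27+o(1)$ of Falgas-Ravry and Vaughan~\cite{MR2988862}, and Razborov's bound $|E(G')|/\binom{n}{3}\leq 5/9+o(1)$ from~\cite{RazbarovK43} forces $|E(G')|/\binom{n}{3}\geq 5/9-o(1)$, after which Theorem~\ref{K43stabilityPik} applies directly to give that $G'$, hence $G$, is $\varepsilon$-near to $C_n$. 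In particular, the Kostochka obstacle you raise is dispatched not by inspecting the certificate but by Pikhurko's theorem, whose extra hypothesis (no $4$-set with exactly one edge) is precisely what separates $C_n$ from the other Kostochka-type constructions.

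Your plan to build the partition directly from certificate slacks is not unreasonable in spirit, but as stated it has real gaps. You never invoke the induced removal lemma, and without it ``rare'' patterns are not ``absent'' patterns; your proposed equivalence relation on vertices via normalized Hamming distance of pair-links is not a standard device, and nothing in your outline explains why near-equal links must yield exactly three classes with the \emph{cyclic} edge rules of $C_n$ rather than one of the competing blow-ups. You also restrict attention to $4$-vertex flags, whereas the structural separation you need either comes from larger patterns (the paper's Lemma~\ref{K43flag} controls $4$-, $5$-, and $6$-vertex densities) or from an external theorem. The insight you are missing is that near-extremality in $\textup{co}_2$, together with the absence of the single-edge $4$-pattern, already forces near-extremality in edge count, so the $\ell_1$-stability literature applies off the shelf and the hand-built tripartition is unnecessary.
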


\begin{theo}
\label{K53stability}
For every $\varepsilon>0$ there exists $\delta>0$ and $n_0$ such that for every $n > n_0$, if $G$ is a $K_5^3$-free $3$-uniform hypergraph on $n$ vertices with 
\begin{align*}
\textup{co}_2(G)\geq \left(\frac{5}{8}-\delta \right) \frac{n^4}{2}, 
\end{align*}
then $G$ is $\varepsilon$-near to $B_n$.
\end{theo}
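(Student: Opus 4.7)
The plan is to mirror the argument for Theorem~\ref{K43stability}. The starting point is the identity
\[
\textup{co}_2(G)=\sum_{T\in\binom{V(G)}{2}}d(T)^2 = \sum_{T}d(T) + 2\sum_{T}\binom{d(T)}{2},
\]
so that $\textup{co}_2(G)/\bigl(\binom{n}{2}(n-2)^2\bigr)$ is, up to $o(1)$, a fixed nonnegative linear combination of the induced densities of the $3$-graphs on $4$ vertices. In Section~\ref{sec:flaga} a flag algebra sum-of-squares (SOS) certificate yields the matching upper bound $5/8$ under $K_5^3$-freeness. I would reuse that certificate to drive the stability argument.

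First, I would set up a compactness/limit argument: if the claim fails, there exist $\varepsilon>0$ and a sequence $(G_n)$ of $K_5^3$-free $3$-graphs on $n$ vertices with $\textup{co}_2(G_n)\ge (5/8-o(1))\,n^4/2$ such that no $G_n$ is $\varepsilon$-near to $B_n$. Passing to a convergent subsequence in the flag algebra sense produces a limit $\Phi$ on which every squared term of the SOS certificate vanishes. This determines the asymptotic densities of all relevant $4$-vertex configurations in $\Phi$, forcing them to coincide with the densities of the sequence $(B_n)$.

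Second, I would convert these density constraints into a structural statement about $G_n$. In $B_n$, the codegree of a monochromatic pair is $n/2+o(n)$ and that of a bichromatic pair is $n+o(n)$, so the forced $4$-vertex densities imply that, after discarding $o(n^2)$ pairs, every pair in $G_n$ has codegree close to one of these two values. Moreover, $K_5^3$-freeness combined with the forbidden excess of the $4$-vertex $3$-graphs on two and three edges (whose limiting densities are pinned by the SOS) implies that the pairs of codegree close to $n$ form an almost complete bipartite graph on $V(G_n)$, whose sides $A,B$ give the bipartition. A standard clean-up step, exactly as in the proof of Theorem~\ref{K43stability}, then shows that all but $o(n^3)$ triples of $G_n$ agree with $B_n$ on the partition $V(G_n)=A\cup B$ with $\lvert|A|-|B|\rvert\le 1$, contradicting the assumption that $G_n$ is not $\varepsilon$-near to $B_n$.

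The main obstacle is the passage from SOS-exact $4$-vertex densities on the limit $\Phi$ to an explicit bipartition of $V(G_n)$. The $B_n$ target is simpler than the cyclic $C_n$ target appearing in Theorem~\ref{K43stability}, so I expect the extraction to be cleaner: a pair $\{u,v\}$ with codegree close to $n$ cannot have both endpoints on the same side, since together with roughly $n/2$ common neighbours on the other side and any two further vertices from the first side one would build a copy of $K_5^3$. Once the bipartition $A\cup B$ is identified, the edit-distance bound follows directly from the codegree profile established above.
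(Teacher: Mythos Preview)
Your outline diverges from the paper's route and has a genuine gap at the step where you extract the bipartition. The paper does not argue via codegree profiles at all. It first invokes Lemma~\ref{K53flag}, a flag-algebra computation showing that in any near-extremal $K_5^3$-free $G$ the density of every $3$-graph on $4$, $5$, or $6$ vertices that is \emph{not} an induced subgraph of $B_n$ is at most $\delta_1$. It then applies the induced hypergraph removal lemma (Theorem~\ref{removallemma}) to pass to a $G'$ that is $\delta_2$-near to $G$ and contains none of these forbidden induced configurations. The bipartition of $G'$ is now obtained by anchoring on a single non-edge $abc$ and classifying each remaining vertex by the number of edges it spans with $\{a,b,c\}$; the absence of bad $4$-, $5$-, and $6$-vertex subgraphs forces this to be an honest $2$-colouring (Claim~\ref{H2color}), after which a short weight count (Claim~\ref{edgesH'1}) gives the edit distance to $B_n$.

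Your plan omits the removal lemma and attempts to read the bipartition off the codegree distribution. The argument you offer for this is circular: you assert that a pair of codegree near $n$ ``cannot have both endpoints on the same side'', but no sides exist yet. The $K_5^3$ you propose to build on $\{u,v,w_1,w_2,w_3\}$ requires all ten triples to be present, in particular $w_1w_2w_3$ and the six triples $uw_iw_j$, $vw_iw_j$; high codegree of the pair $uv$ controls only the three triples $uvw_i$ and says nothing about the remaining seven. More broadly, SOS-tightness at the level of $4$-vertex densities does not by itself pin down the structure: the paper genuinely needs the $5$- and $6$-vertex information so that, after removal, the anchoring construction goes through (for instance, ruling out $F_{3,3}$ on six vertices is exactly what makes one of the two colour classes independent in Claim~\ref{H2color}). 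Without the removal step or a concrete substitute for it, the passage from ``correct aggregate densities'' to ``explicit bipartition of $V(G_n)$'' is essentially the entire content of the theorem, and your sketch does not supply it.
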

There is another $K_5^3$-free $3$-graph~\cite{Sidorenko1981SystemsOS} with the same edge density as $B_n$, namely $H_5$. The vertex set of $H_5$ is divided into $4$ parts $A_1, A_2, A_3, A_4$ with $||A_j|-|A_i||\leq 1$ for all $1\leq i\leq j\leq 4$ and say a triple $e$ is not an edge of $H_{5}$ iff there is some $j$ ($1\leq j \leq 4$) such that $|e \cap A_{j}|\geq 2$ and $|e\cap A_{j}|+|e\cap A_{j+1}|=3$, where $A_{5}=A_1$, see Figure~\ref{fig:H5F33} for an illustration of the complement of $H_5$. While $H_5$ is conjectured to be one of the asymptotically extremal hypergraphs in $\ell_1$-norm, it is not an extremal hypergraph in $\ell_2$-norm, because $B_n$ has an asymptotically higher codegree squared sum.

Besides giving asymptotic result for cliques, we prove an exact result for $F_{3,3}$. Denote by $F_{3,3}$ the $3$-graph on $6$ vertices with edge set $\{ 123, 145, 146, 156, 245, 246, 256, 345,$ $346, 356 \}$, see Figure~\ref{fig:H5F33}. We prove that the codegree squared extremal hypergraph of $F_{3,3}$ is the balanced, complete, bipartite hypergraph $B_n$. Keevash and Mubayi~\cite{MR2912791} and independently Goldwasser and Hansen~\cite{MR3048207} proved that $B_n$ is also extremal for the $\ell_1$-norm.

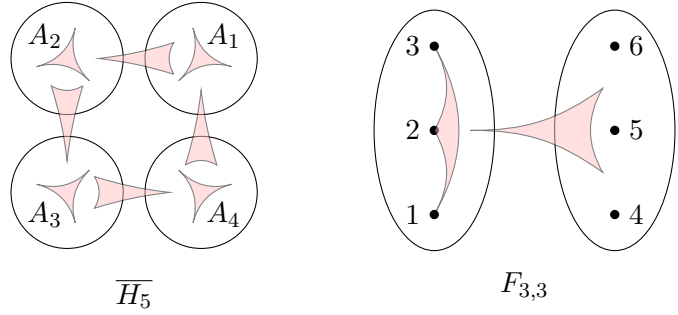
\begin{figure}
    \begin{center}
\tikzset{
vtx/.style={inner sep=1.1pt, outer sep=0pt, circle, fill,draw}, 
hyperedge/.style={fill=pink,opacity=0.5,draw=black}, 
vtxBig/.style={inner sep=15pt, outer sep=0pt, circle, fill=white,draw}, 
hyperedge/.style={fill=pink,opacity=0.5,draw=black}, 
}
\vc{
\begin{tikzpicture}[scale=1.4]
\draw (45:0.9) coordinate(x1) node[vtxBig]{};
\draw (135:0.9) coordinate(x2) node[vtxBig]{};
\draw (225:0.9) coordinate(x3) node[vtxBig]{};
\draw (315:0.9) coordinate(x4) node[vtxBig]{};
\foreach \ashift in {45,135,225,315}{
\draw
(\ashift:0.9)
+(\ashift+135-30:0.3) coordinate(2) 
+(\ashift+135+30:0.3) coordinate(1) 
(\ashift+90:0.9)++(\ashift-45:0.3) coordinate(3)
;
\draw[hyperedge] (1) to[bend left] (2) to[bend left=5] (3) to[bend left=5] (1);
\draw
(\ashift:0.9)
+(\ashift+180:0.3) coordinate(1) 
+(\ashift+180+120:0.3) coordinate(2) 
+(\ashift+180+240:0.3) coordinate(3) 
;
\draw[hyperedge] (1) to[bend left] (2) to[bend left] (3) to[bend left] (1);
}
\draw (45:1.2)  node{$A_1$};
\draw (135:1.2) node{$A_2$};
\draw (225:1.2) node{$A_3$};
\draw (315:1.2) node{$A_4$};
\draw (0,-1.6) node{$\overline{H_5}$};
\end{tikzpicture}
}
        \hskip 3em
\vc{
\begin{tikzpicture}[scale=1.6]
\draw
(0,-0.7) coordinate(1) node[vtx,label=left:1](a){}
(0,0) coordinate(2) node[vtx,label=left:2](b){}
(0,0.7) coordinate(3) node[vtx,label=left:3](c){}
(1.5,-0.7) coordinate(4) node[vtx,label=right:4](d){}
(1.5,0.0) coordinate(5) node[vtx,label=right:5](d){}
(1.5,0.7) coordinate(6) node[vtx,label=right:6](d){}
;
\draw[hyperedge] (1) to[bend right] (2) to[bend right] (3) to[bend left] (1);
\draw
(0.3,0) coordinate(1)
(1.5-0.1,0.35) coordinate(2)
(1.5-0.1,-0.35) coordinate(3)
(0,0) ellipse (0.5 cm and 1 cm)
(1.5,0) ellipse (0.5 cm and 1 cm)
;
\draw[hyperedge] (1) to[bend right=18] (2) to[bend right] (3) to[bend right=18] (1);
\draw (0.75,-1.3) node {$F_{3,3}$};
\end{tikzpicture}
}
    \end{center}
    \caption{Left: The complement of $H_5$. Right: A sketch of $F_{3,3}$, which has 6 vertices and edge set $\{ 123, 145, 146, 156, 245, 246, 256, 345,$ $346, 356 \}$.}
    \label{fig:H5F33}
\end{figure}

\begin{theo}
\label{F33exact}
There exists $n_0$ such that for all $n\geq n_0$
\begin{align*}
 \textup{exco}_2(n,F_{3,3})= \textup{co}_2(B_n). 
\end{align*}
Furthermore, $B_n$ is the unique $F_{3,3}$-free $3$-uniform hypergraph $G$ on $n$ vertices satisfying \begin{align*}
\textup{co}_2(G)   =\textup{exco}_2(n,F_{3,3}). 
\end{align*}
\end{theo}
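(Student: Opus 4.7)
The plan is to prove Theorem~\ref{F33exact} via a stability-plus-perturbation argument, mirroring the $\ell_1$-norm treatment of Keevash--Mubayi~\cite{MR2912791} and Goldwasser--Hansen~\cite{MR3048207}. The first step is to establish a stability companion: $\sigma(F_{3,3}) = 5/8$ and every $F_{3,3}$-free $n$-vertex hypergraph $G$ with $\textup{co}_2(G) \geq (5/8 - \delta)n^4/2$ is $\varepsilon$-near $B_n$. Since $F_{3,3}$ admits no proper vertex $2$-coloring (by pigeonhole some pair in $\{4,5,6\}$ shares a color, then each $i \in \{1,2,3\}$ is forced to the opposite color by the corresponding edge, making $\{1,2,3\}$ monochromatic), $B_n$ is $F_{3,3}$-free, so the lower bound $\sigma(F_{3,3})\geq 5/8$ is immediate; the matching upper bound with stability will come from a flag algebra computation parallel to those underlying Theorems~\ref{K43stability} and \ref{K53stability}.

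Suppose now $G$ is $F_{3,3}$-free with $\textup{co}_2(G) \geq \textup{co}_2(B_n)$. By stability, fix a partition $V = A \cup B$ minimizing the edit distance to $B_n^{A,B}$; then $|A|, |B| = n/2 \pm o(n)$. Let $W_A, W_B$ be the \emph{wrong} edges (entirely in $A$ or entirely in $B$), and $M_1, M_2$ the \emph{missing} cross triples (with $1$ or $2$ vertices in $A$ respectively). For each wrong edge $e = \{a,b,c\} \subset A$ and each $\{x,y,z\} \subset B \setminus e$, $F_{3,3}$-freeness forces at least one of the nine cross triples $\{p,q,r\}$ with $p \in e$, $\{q,r\} \subset \{x,y,z\}$ to be missing. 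Equivalently, the three auxiliary graphs $G_p$ on $B \setminus e$ defined by $qr \in E(G_p) \Leftrightarrow \{p,q,r\} \notin E(G)$ satisfy that $\overline{G_a \cup G_b \cup G_c}$ is triangle-free; Mantel's theorem then gives
\[
m_1(a) + m_1(b) + m_1(c) \;\geq\; |E(G_a \cup G_b \cup G_c)| \;\geq\; \frac{|B|^2}{4} - O(n),
\]
where $m_1(v) = |\{\{q,r\}\subset B : \{v,q,r\} \in M_1\}|$, with a symmetric bound for wrong edges in $W_B$.

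Expanding $\textup{co}_2$ pair-by-pair, writing $d_G(T) = d_{B_n^{A,B}}(T) + \Delta(T)$, yields
\[
\textup{co}_2(G) - \textup{co}_2(B_n^{A,B}) \;=\; 2|B|(3|W_A|-|M_2|) + 2|A|(3|W_B|-|M_1|) - 4(n-2)(|M_1|+|M_2|) + Q,
\]
where $Q \geq 0$ collects the squared-$\Delta$ terms. Summing the Mantel bound over $e \in W_A$ gives $\sum_v m_1(v)\, d_{W_A}(v) \geq |W_A|\, |B|^2/4$, and a case analysis on the structure of $W_A$ (separating concentrated clusters of wrong edges at high-degree vertices from a spread-out remainder), combined with the stability bound $|W|,|M| = o(n^3)$, shows that the displayed right-hand side is strictly negative whenever $W \neq \emptyset$. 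Hence $W_A = W_B = \emptyset$, so every edge of $G$ crosses $A$ and $B$; then $\textup{co}_2(G) \leq \textup{co}_2(B_n^{A,B}) \leq \textup{co}_2(B_n)$, with both inequalities strict unless $G = B_n^{A,B}$ and the partition $(A,B)$ is balanced (the second inequality being a direct convexity calculation in $|A|$ versus $|B|$). Therefore $G = B_n$, as claimed, with the uniqueness statement also following.

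The main obstacle lies in the quantitative gain-versus-loss comparison in the preceding paragraph: the first-order $\textup{co}_2$-gain per wrong edge is $\approx 3n$ and the loss per missing cross triple is $\approx 5n$, so a single wrong edge, which forces $\Omega(n^2)$ missing edges via Mantel, is immediately ruled out. The delicate case is when the wrong edges are organized into a dense substructure on few vertices (e.g.\ a star or near-clique) so that their forced missing edges overlap heavily. Handling this requires a careful degree-regularization argument on the $3$-uniform hypergraphs $W_A, W_B$ --- a Cauchy--Schwarz estimate on $\sum_v m_1(v)\, d_{W_A}(v)$ paired with a nontrivial upper bound on $\max_v d_{W_A}(v)$ obtained from the stability hypothesis --- in order to exclude the truly adversarial configurations.
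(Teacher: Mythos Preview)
Your high-level plan---flag-algebra stability followed by a cleaning step---matches the paper exactly; your first paragraph is the paper's Lemma~\ref{flag F33} and Theorem~\ref{F33stability}, proved the same way.

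The divergence, and the gap, is in the exact step. The paper does \emph{not} attempt your global gain-versus-loss expansion of $\textup{co}_2(G)-\textup{co}_2(B_n)$. Instead it introduces the vertex functional
\[
q(x)=\sum_{y\neq x} d(x,y)^2+2\!\!\sum_{\{v,w\}\in E(L(x))}\!\!d(v,w),
\]
which is (up to lower order) the drop in $\textup{co}_2$ upon deleting $x$, and first reduces by iterated vertex deletion---comparing $q(x)$ against $\textup{co}_2(B_n)-\textup{co}_2(B_{n-1})$---to the case where every vertex satisfies $q(x)\geq \tfrac{5}{4}n^3-6n^2$. Under this minimum-$q$ hypothesis the paper then argues one vertex at a time: if some wrong edge $a_1a_2a_3\subset A$ survives, a direct upper bound on $q(a_1)$ (using your same triangle-freeness observation in the common $B$-link, but localised to $L(a_1)$) forces $q(a_1)<\tfrac{5}{4}n^3-6n^2$, a contradiction. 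The vertex-deletion reduction is precisely the device that dissolves the ``dense cluster'' obstruction you flag at the end.

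Your proposed workaround for that obstruction does not go through as stated. Stability gives only $|W_A|=o(n^3)$, which places no constraint on $\max_v d_{W_A}(v)$: a single vertex $a^\star$ can carry $\Theta(n^2)$ wrong edges. In the extreme instance where every triple $\{a^\star,a,a'\}\subset A$ is present and every triple $\{a^\star,b,b'\}$ with $b,b'\in B$ is absent, your Mantel constraint is saturated entirely by $m_1(a^\star)$, and your linear term and the quadratic remainder $Q$ are each of order $n^3$ with opposite signs and cancel to leading order; the sign of your displayed right-hand side is therefore not determined by the inequalities you have written. You can absorb this particular example by re-optimising the partition (the paper does this too, choosing $A\cup B$ to minimise $e(A)+e(B)$ and thereby enforcing $|L_B(x)|\geq |L_A(x)|$ for $x\in A$), but that still leaves the general problem of bounding $Q$, which you note is nonnegative but never control. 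The paper's $q(x)$ reduction is exactly the missing ingredient that lets one avoid ever comparing these two $\Theta(n\cdot|W|)$ quantities globally.
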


We also prove some general results for $\sigma$. First, we prove that the limit in \eqref{sigmalimit} exists.
\begin{prop}
\label{coslim}
Let $\mathcal{F}$ be a family of $k$-graphs. Then,
$ \frac{\textup{exco}_2(n,\mathcal{F})}{\binom{n}{k-1}(n-k+1)^2}$ is non-increasing as $n$ increases. In particular, it tends to a limit $\sigma(\mathcal{F})$ as $n\rightarrow \infty$.
\end{prop}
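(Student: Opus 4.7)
The plan is a vertex-deletion / averaging argument, analogous to the classical proof that $\textup{ex}(n,H)/\binom{n}{k}$ is non-increasing in $n$, but with one extra step to handle the codegree-squared functional. Let $G$ be an extremal $n$-vertex $\mathcal{F}$-free $k$-graph, and for each $v\in V(G)$ let $G_v:=G-v$. Each $G_v$ is an $(n-1)$-vertex $\mathcal{F}$-free $k$-graph, so
\begin{align*}
\sum_{v\in V(G)}\textup{co}_2(G_v)\ \le\ n\cdot \textup{exco}_2(n-1,\mathcal{F}).
\end{align*}

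The next step is an averaging identity evaluating the left-hand side. Fix a $(k-1)$-set $T\subseteq V(G)$: for $v\notin T$, $d_{G_v}(T)$ equals $d_G(T)-1$ if $T\cup\{v\}\in E(G)$ and $d_G(T)$ otherwise; exactly $d_G(T)$ of the $n-k+1$ choices of $v$ fall into the first case. Expanding the square, summing over $v\notin T$, then summing over $T$ and using $\sum_T d_G(T)=k\,e(G)$ yields
\begin{align*}
\sum_{v\in V(G)}\textup{co}_2(G_v)\ =\ (n-k-1)\,\textup{co}_2(G)\ +\ k\,e(G).
\end{align*}

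Unlike the $\ell_1$ case, the right-hand side carries an extra $k\,e(G)$ term that must be absorbed, and this is the one non-routine point. The key observation is the trivial codegree bound $d_G(T)\le n-k+1$, which gives $\textup{co}_2(G)=\sum_T d_G(T)^2\le(n-k+1)\sum_T d_G(T)=(n-k+1)\,k\,e(G)$, hence $k\,e(G)\ge \textup{co}_2(G)/(n-k+1)$. Combining,
\begin{align*}
n\cdot \textup{exco}_2(n-1,\mathcal{F})\ \ge\ \left(n-k-1+\frac{1}{n-k+1}\right)\textup{co}_2(G)\ =\ \frac{(n-k)^2}{n-k+1}\,\textup{co}_2(G).
\end{align*}

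Finally, using the elementary identity $\binom{n}{k-1}(n-k+1)=n\binom{n-1}{k-1}$, a short calculation gives $\binom{n}{k-1}(n-k+1)^2\big/\bigl(\binom{n-1}{k-1}(n-k)^2\bigr)=n(n-k+1)/(n-k)^2$, so the inequality above rearranges exactly to
\begin{align*}
\frac{\textup{exco}_2(n,\mathcal{F})}{\binom{n}{k-1}(n-k+1)^2}\ \le\ \frac{\textup{exco}_2(n-1,\mathcal{F})}{\binom{n-1}{k-1}(n-k)^2}.
\end{align*}
Since the ratio lies in $[0,1]$ (note that $d_G(T)\le n-k+1$ gives $\textup{co}_2(G)\le \binom{n}{k-1}(n-k+1)^2$), monotonicity forces convergence, yielding the claimed limit $\sigma(\mathcal{F})$.
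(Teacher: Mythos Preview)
Your proof is correct and follows the same vertex-deletion averaging idea as the paper: take an extremal $n$-vertex $G$, average the codegree squared sum over all one-vertex deletions, and compare to $\textup{exco}_2(n-1,\mathcal{F})$. The resulting inequality is identical to the paper's.

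The difference is purely in how the lower bound on $\sum_v \textup{co}_2(G_v)$ is obtained. The paper phrases the averaging probabilistically and applies Jensen's inequality term-by-term, $\mathbb{E}[d_{G[S]}(T)^2\mid T\subset S]\ge (\mathbb{E}[d_{G[S]}(T)\mid T\subset S])^2$, which directly yields the factor $\bigl(\tfrac{n-k}{n-k+1}\bigr)^2$. You instead compute the sum exactly, picking up the residual term $k\,e(G)$, and then absorb it via $k\,e(G)\ge \textup{co}_2(G)/(n-k+1)$; the algebra $(n-k-1)+\tfrac{1}{n-k+1}=\tfrac{(n-k)^2}{n-k+1}$ recovers the same factor. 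Your route is slightly more elementary (no convexity needed) but requires spotting that extra inequality; the paper's route is more conceptual but hides the same computation inside Jensen. Either way, the final monotonicity inequality and the conclusion are the same.
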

A classical result in extremal combinatorics is the supersaturation phenomenon, discovered by Erd\H{o}s and Simonovits \cite{Supersaterdos}. For hypergraphs it states, that when the edge density of a hypergraph $H$ exceeds the Tur\'an density of a hypergraph $G$, then $H$ contains many copies of $G$. Proposition~{\ref{cossupersat}} shows that the same phenomenon holds for $\sigma$.

\begin{prop}
\label{cossupersat}
Let $F$ be a $k$-graph on $f$ vertices. For every $\varepsilon>0$, there exists $\delta=\delta(\varepsilon,f)>0$ and $n_0$ such that every $n$-vertex $k$-uniform hypergraph $G$ with $n>n_0$ and $\textup{co}_2(G)>(\sigma(F)+\varepsilon) \binom{n}{k-1}n^2$ contains at least  $\delta\binom{n}{f}$ copies of $F$. 
\end{prop}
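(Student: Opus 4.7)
The plan is to adapt the classical averaging argument of Erd\H{o}s and Simonovits to the codegree squared setting. The heuristic is that if $\textup{co}_2(G)$ exceeds the $\sigma(F)$ threshold, then a random large-enough induced subgraph $G[S]$ still has $\textup{co}_2(G[S])$ above the $F$-free threshold with positive probability, and hence must contain a copy of $F$; averaging over all choices of $S$ then yields the desired lower bound on the number of copies of $F$ in $G$.

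More precisely, fix $\varepsilon>0$. By Proposition~\ref{coslim}, the ratio $\textup{exco}_2(m,F)/(\binom{m}{k-1}(m-k+1)^2)$ is monotone non-increasing in $m$ with limit $\sigma(F)$, so we may fix an integer $m\geq f$ with
\begin{align*}
\textup{exco}_2(m,F) \leq \bigl(\sigma(F)+\tfrac{\varepsilon}{3}\bigr)\binom{m}{k-1}(m-k+1)^2.
\end{align*}
Let $S$ be a uniformly random $m$-subset of $V(G)$. By linearity, $\mathbb{E}[\textup{co}_2(G[S])]$ decomposes into a sum over $(k-1)$-sets $T\subset V(G)$ of $\Pr[T\subset S]\cdot \mathbb{E}[d_{G[S]}(T)^2\mid T\subset S]$, where $\Pr[T\subset S]=\binom{m}{k-1}/\binom{n}{k-1}$ and, conditioned on $T\subset S$, the random variable $d_{G[S]}(T)$ is hypergeometric (choosing $m-k+1$ further vertices from $n-k+1$, of which $d_G(T)$ lie in $N_G(T)$). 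A short second-moment computation together with the hypothesis $\textup{co}_2(G) > (\sigma(F)+\varepsilon)\binom{n}{k-1}n^2$ shows that, for all $n$ sufficiently large in terms of $m$ and $\varepsilon$,
\begin{align*}
\mathbb{E}[\textup{co}_2(G[S])] \geq \bigl(\sigma(F)+\tfrac{2\varepsilon}{3}\bigr)\binom{m}{k-1}(m-k+1)^2.
\end{align*}

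Next, split this expectation according to whether $G[S]$ is $F$-free. In the $F$-free case we use $\textup{co}_2(G[S])\leq \textup{exco}_2(m,F)\leq (\sigma(F)+\varepsilon/3)\binom{m}{k-1}(m-k+1)^2$; otherwise we use the trivial bound $\textup{co}_2(G[S])\leq \binom{m}{k-1}(m-k+1)^2$. Comparing these with the lower bound on $\mathbb{E}[\textup{co}_2(G[S])]$ forces $\Pr[G[S]\text{ contains a copy of }F]\geq \varepsilon/3$. Finally, if $N(F,G)$ denotes the number of copies of $F$ in $G$, the standard double-count gives $\mathbb{E}[N(F,G[S])] = N(F,G)\cdot \binom{m}{f}/\binom{n}{f}$, and since this expectation is at least the probability that $G[S]$ contains a copy of $F$, we conclude $N(F,G)\geq \delta\binom{n}{f}$ with $\delta = \varepsilon/(3\binom{m}{f})$.

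The essentially only nontrivial point is the hypergeometric second-moment calculation: one must confirm that the dominant contribution equals $\textup{co}_2(G)\cdot \binom{m}{k-1}(m-k+1)^2/(\binom{n}{k-1}(n-k+1)^2)$ up to a $(1+o(1))$ factor, while the linear correction term contributes only $O(\binom{n}{k-1}n)$, which is lower-order compared to the assumed quadratic growth of $\textup{co}_2(G)$. Everything else is bookkeeping.
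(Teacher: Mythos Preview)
Your argument is correct, and in fact it is more direct than the paper's. The paper first establishes an auxiliary concentration-type lemma (Lemma~\ref{supersatlemma}): using a hypergeometric tail bound it shows that at least an $\varepsilon/4$ proportion of $m$-sets $S$ satisfy $\textup{co}_2(G[S]) > \alpha\binom{m}{k-1}(m-k+1)^2$. This is a stronger intermediate statement than what is needed for Proposition~\ref{cossupersat}; it is adapted from Mubayi--Zhao's argument for the minimum-codegree threshold, where an expectation argument would not suffice because one is controlling a minimum rather than a sum. For the codegree squared sum, by contrast, your expectation-plus-splitting approach works cleanly: the lower bound $\mathbb{E}[\textup{co}_2(G[S])]\geq \frac{\binom{m}{k-1}(m-k+1)^2}{\binom{n}{k-1}(n-k+1)^2}\,\textup{co}_2(G)$ is exactly the inequality proved in Proposition~\ref{coslim} via $\mathbb{E}[X^2]\geq (\mathbb{E}[X])^2$, and then the Markov-type split against $\textup{exco}_2(m,F)$ and the trivial maximum immediately yields $\Pr[F\subset G[S]]\geq \varepsilon/3$.

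One minor remark: the step you flag as the ``essentially only nontrivial point''---the hypergeometric second-moment calculation---is in fact just Jensen's inequality applied to the conditional codegree; you only need a lower bound on $\mathbb{E}[\textup{co}_2(G[S])]$, so no variance estimate or $(1+o(1))$ control is required. The variance term is nonnegative and can simply be dropped.
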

Supersaturation has been used to show that blowing-up a $k$-graph does not change its Tur\'an density \cite{Supersaterdos}. We will use our Supersaturation result, Proposition~\ref{cossupersat}, to show the same conclusion holds for $\sigma$: Blowing-up a $k$-graph also does not change the codegree squared density.\\
For a $k$-graph $H$ and $t\in \mathbb{N}$, the \emph{blow-up} $H(t)$ of $H$ is defined by replacing each vertex $x\in V(H)$ by $t$ vertices $x^1,\ldots,x^t$ and each edge $x_1 \cdots x_k\in E(H)$ by the $t^k$ edges $x_1^{a_1}\cdots x_k^{a_k}$ with $1 \leq  a_1,\ldots, a_k \leq t$.  

\begin{corl}
\label{blowup}
Let $H$ be a $k$-uniform hypergraph and $t\in \mathbb{N}$. Then,
$$ \sigma(H)=\sigma(H(t)).$$
\end{corl}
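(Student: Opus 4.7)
The plan is to prove the two inequalities separately; the direction $\sigma(H) \leq \sigma(H(t))$ is immediate, since $H$ is a sub-hypergraph of $H(t)$ and so every $H$-free hypergraph is also $H(t)$-free, giving $\textup{exco}_2(n,H) \leq \textup{exco}_2(n,H(t))$ for all $n$ and hence the inequality in the limit.

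For the reverse $\sigma(H(t)) \leq \sigma(H)$, I would argue by contradiction: if $\sigma(H(t)) > \sigma(H)$, fix $\varepsilon > 0$ so that for infinitely many $n$ there exists an $H(t)$-free $k$-graph $G$ on $n$ vertices with $\textup{co}_2(G) \geq (\sigma(H)+\varepsilon)\binom{n}{k-1}n^2$. By Proposition~\ref{cossupersat}, such a $G$ contains at least $\delta \binom{n}{f}$ copies of $H$, where $f = |V(H)|$. The task then reduces to extracting a copy of $H(t)$ from this abundance of $H$-copies, which would contradict the $H(t)$-freeness of $G$.

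To perform the extraction I would use a standard supersaturation-to-blow-up trick. Label $V(H) = \{v_1,\ldots,v_f\}$ and build an $f$-partite $f$-uniform auxiliary hypergraph $\mathcal{A}$ with parts $V_1,\ldots,V_f$, each a distinct copy of $V(G)$, placing a transversal edge $(x_1,\ldots,x_f)$ into $\mathcal{A}$ precisely when the assignment $v_i \mapsto x_i$ is an (injective) embedding of $H$ into $G$. Since every unlabeled copy of $H$ contributes between $1$ and $f!$ labeled embeddings, $|E(\mathcal{A})| = \Omega(n^f)$. By the Erd\H{o}s theorem on Tur\'an numbers of complete multipartite $f$-uniform hypergraphs (the multipartite analogue of the K\H{o}v\'ari--S\'os--Tur\'an theorem), for $n$ large $\mathcal{A}$ must then contain a complete multipartite sub-hypergraph $K^{(f)}_{t,\ldots,t}$, witnessed by sets $T_i \subseteq V_i$ of size $t$ such that every transversal $(x_1,\ldots,x_f) \in T_1 \times \cdots \times T_f$ is an edge of $\mathcal{A}$.

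Viewed as subsets of $V(G)$, the sets $T_1,\ldots,T_f$ are pairwise disjoint: for any $x_i \in T_i$ and $x_j \in T_j$ with $i \neq j$, there is a transversal containing both which represents an injective embedding of $H$, forcing $x_i \neq x_j$. The $tf$ distinct vertices $T_1 \cup \cdots \cup T_f$ therefore span a copy of the blow-up $H(t)$ in $G$, contradicting the $H(t)$-freeness of $G$. The main content is the supersaturation-plus-Erd\H{o}s-box argument, which is entirely standard; the only care needed is in translating the codegree-squared supersaturation of Proposition~\ref{cossupersat} into an $\Omega(n^f)$ count of labeled embeddings, which is routine book-keeping.
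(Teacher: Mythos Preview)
Your argument is correct and follows the same strategy as the paper: the trivial inclusion for one inequality, and supersaturation (Proposition~\ref{cossupersat}) followed by an Erd\H{o}s-type box argument in an auxiliary $f$-uniform hypergraph for the other. The only difference is cosmetic: the paper builds a \emph{non-partite} auxiliary $f$-graph on $V(G)$ whose edges are the $f$-sets carrying a copy of $H$, finds a large complete $f$-partite subhypergraph, and then needs an extra Ramsey/colouring step (colouring each edge by the ordering in which $H$ sits on it) to pass to $H(t)$; your $f$-partite auxiliary hypergraph on labelled embeddings bypasses that colouring step entirely, which is a small but genuine simplification. Incidentally, the paper's first sentence has a typo in the direction of the trivial inequality---you have it the right way round.
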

Similarly to the Tur\'an density~\cite{Erdos3partite}, the codegree squared density has a jump at $0$, i.e. it is strictly bounded away from $0$. Note that this phenomenon does not happen for the minimum codegree threshold~\cite{LoMark}.
\begin{prop}
\label{jumps}
Let $H$ be a $k$-uniform hypergraph. Then
\begin{itemize}
    \item[(i)] $(\pi(H))^2\leq \sigma(H)\leq \pi(H)$,
    \item[(ii)] $\sigma(H)=0$ or $\sigma(H)\geq \frac{(k-1)!}{k^k}$. 
\end{itemize} 
\end{prop}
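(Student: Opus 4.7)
The plan is to split along the two parts. Part (i) follows from two elementary inequalities applied to the codegree vector, and Part (ii) combines Part~(i) with Erd\H{o}s' theorem on $k$-partite Tur\'an densities together with a direct computation on the complete balanced $k$-partite $k$-graph.

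For Part~(i), I would first record two identities that make the normalization transparent: $\sum_T d_G(T) = k \cdot e(G)$ and $\binom{n}{k-1}(n-k+1) = k\binom{n}{k}$, so that the quantity $\textup{co}_2(G)/[\binom{n}{k-1}(n-k+1)^2]$ rewrites cleanly in terms of $e(G)/\binom{n}{k}$. For the upper bound $\sigma(H) \leq \pi(H)$, I use the pointwise estimate $d_G(T) \leq n-k+1$ to get $d_G(T)^2 \leq (n-k+1)\, d_G(T)$; summing over $T$ and normalizing reduces the ratio to exactly the edge density $e(G)/\binom{n}{k}$, so $\sigma \leq \pi$ after taking the limit via Proposition~\ref{coslim}. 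For the lower bound $\sigma(H) \geq \pi(H)^2$, I apply Cauchy--Schwarz in the form $\bigl(\sum_T d_G(T)\bigr)^2 \leq \binom{n}{k-1} \sum_T d_G(T)^2$, which rearranges to $\textup{co}_2(G) \geq k^2 e(G)^2 /\binom{n}{k-1}$. Evaluating this on an $H$-free $G$ with $e(G) = \textup{ex}(n,H)$ and normalizing yields $(e(G)/\binom{n}{k})^2$, which tends to $\pi(H)^2$.

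For Part~(ii), I would invoke Erd\H{o}s' classical theorem that a $k$-uniform hypergraph $H$ has $\pi(H) = 0$ if and only if $H$ is $k$-partite. In the $k$-partite case, Part~(i) immediately forces $\sigma(H) \leq \pi(H) = 0$. In the non-$k$-partite case, the balanced complete $k$-partite $k$-graph $T_k(n)$ (with parts of size $\lfloor n/k \rfloor$ or $\lceil n/k \rceil$) is $H$-free, and I would compute $\textup{co}_2(T_k(n))$ directly: the only $(k-1)$-tuples with positive codegree are those using exactly one vertex from $k-1$ distinct parts, and each such tuple has codegree equal to the size of the remaining part. Counting these tuples gives $\textup{co}_2(T_k(n)) \sim k(n/k)^{k+1} = n^{k+1}/k^{k}$, which after dividing by $\binom{n}{k-1}(n-k+1)^2 \sim n^{k+1}/(k-1)!$ yields $\sigma(H) \geq (k-1)!/k^k$.

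There is no real obstacle here; the argument is essentially bookkeeping combined with the classical dichotomy of Erd\H{o}s. The only points requiring attention are keeping the normalization clean, and handling the lower-order terms that appear when $n$ is not divisible by $k$; these vanish in the limit, and the monotonicity of Proposition~\ref{coslim} absorbs them.
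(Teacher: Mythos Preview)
Your proposal is correct and follows essentially the same route as the paper. For~(i) both arguments use Cauchy--Schwarz for the lower bound, and your pointwise estimate $d_G(T)^2\le (n-k+1)\,d_G(T)$ is just a minor (in fact slightly cleaner) variant of the paper's edge-weight identity $\textup{co}_2(G)=\sum_{e}w_G(e)\le kn\,|E(G)|$; for~(ii) both invoke Erd\H{o}s' theorem and compute directly on the balanced complete $k$-partite $k$-graph.
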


Our paper is organised as follows. In Section~\ref{cancellative}, as a warm up, we determine the maximum $\ell_2$-norm of cancellative\footnote{A hypergraph is called \emph{cancellative} if it is $\{F_4,F_5\}$-free. See Section~\ref{cancellative} for the definition of $F_4$ and $F_5$.} $3$-graphs, which is an analogue of a classical result of Bollob\'as~\cite{Bollobascancellative}. 
Next, in Section~\ref{preperation} we introduce terminology and give an overview of the tools we will be using. In Section~\ref{General results} we present our general results on maximal codegree squared sums. Section~\ref{cliques} is dedicated to proving our main results on cliques, i.e., proving Theorems~\ref{K43stability} and \ref{K53stability}. In Section~\ref{F33section} we present the proof of our exact result, Theorem~\ref{F33exact}. 

In a follow-up paper~\cite{BalCleLid}, we systematically study the codegree squared densities of several hypergraphs, including a longer discussion of related open problems.

\section{\texorpdfstring{Forbidding  $F_4$ and $F_5$}{TEXT}}
\label{cancellative}
In this section we will provide an example of how a classical Tur\'an-type result on the $\ell_1$-norm can imply a result for the $\ell_2$-norm. 
Denote by $F_4$ the $4$-vertex $3$-graph\footnote{This hypergraph is also knows as $K_4^{3-}$.} with edge set $\{123,124,234\}$ and $F_5$ the $5$-vertex $3$-graph with edge set $\{123,124,345\}$, see Figure~\ref{fig:f4f5}. The $3$-graphs which are $F_4$- and $F_5$-free are called \emph{cancellative hypergraphs}. Denote by $S_n$ the complete balanced $3$-partite $3$-graph on $n$ vertices. This is the $3$-graph with vertex partition $A\cup B \cup C$ with part sizes $|A|=\lfloor n/3\rfloor$, $|B|=\lfloor (n+1)/3\rfloor$ and $|C|=\lfloor (n+2)/3\rfloor$, where triples $abc$ are edges iff $a,b$ and $c$ are each from a different class. Bollob\'as~\cite{Bollobascancellative} proved that the $n$-vertex cancellative hypergraph with the most edges is $S_n$. Using his result and a double counting argument we show that $S_n$ is also the largest cancellative hypergraph in the $\ell_2$-norm. 

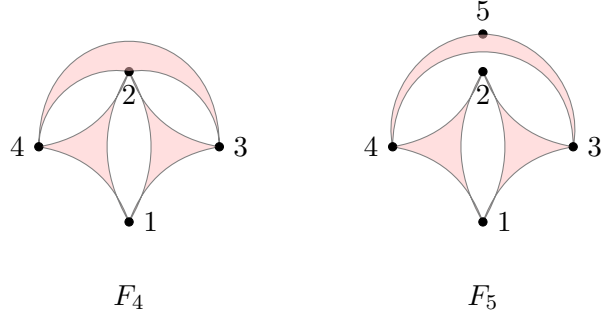
\begin{figure}
    \begin{center}
\tikzset{
vtx/.style={inner sep=1.1pt, outer sep=0pt, circle, fill,draw}, 
hyperedge/.style={fill=pink,opacity=0.5,draw=black}, 
hyperedgeg/.style={fill=gray,opacity=0.5,draw=black}, 
vtxBig/.style={inner sep=17pt, outer sep=0pt, circle, fill=white,draw}, 
hyperedge/.style={fill=pink,opacity=0.5,draw=black}, 
}    
\begin{tikzpicture}[scale=1]
\draw
(0,0) coordinate(1) node[vtx,label=right:1](a){}
(0,2) coordinate(2) node[vtx,label=below:2](b){}
(1.2,1) coordinate(3) node[vtx,label=right:3](c){}
(-1.2,1) coordinate(4) node[vtx,label=left:4](c){}
;
\draw[hyperedge] (1) to[bend right] (2) to[bend right] (3) to[bend right] (1);
\draw[hyperedge] (1) to[bend left] (2) to[bend left] (4) to[bend left] (1);
\draw[hyperedge] (3) to[bend right=50] (2) to[bend right=50] (4) to[bend left=90,looseness=2] (3);
\draw (0,-1) node {$F_4$};
\end{tikzpicture}
\hskip 3em
\begin{tikzpicture}[scale=1]
\draw
(0,0) coordinate(1) node[vtx,label=right:1](a){}
(0,2) coordinate(2) node[vtx,label=below:2](b){}
(1.2,1) coordinate(3) node[vtx,label=right:3](c){}
(-1.2,1) coordinate(4) node[vtx,label=left:4](c){}
(0,2.5) coordinate(5) node[vtx,label=above:5](b){}
;
\draw[hyperedge] (1) to[bend right] (2) to[bend right] (3) to[bend right] (1);
\draw[hyperedge] (1) to[bend left] (2) to[bend left] (4) to[bend left] (1);
\draw[hyperedge] (3) to[bend right=50] (5) to[bend right=50] (4) to[bend left=90,looseness=1.8] (3);
\draw (0,-1) node {$F_5$};
\end{tikzpicture}
    \end{center}
    \caption{The hypergraphs $F_4$ and $F_5$.}
    \label{fig:f4f5}
\end{figure}

\begin{theo}
\label{F4F5}Let $n\in \mathbb{N}$. We have
\begin{align*} 
\textup{exco}_2(n,\{F_4,F_5\})= \textup{co}_2(S_n),
\end{align*}
and therefore also 
$$\sigma(\{F_4,F_5\})=\frac{2}{27}.$$
The unique extremal hypergraph is $S_n$. 
\end{theo}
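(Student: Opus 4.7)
The plan is to combine a pair-codegree inequality derived from $F_4$-freeness with Bollob\'as's classical edge bound for cancellative hypergraphs. Concretely, I will first establish a key lemma: if $G$ is a cancellative $3$-graph on $n$ vertices and $abc \in E(G)$, then the three codegree neighborhoods
\begin{align*}
N_G(ab) = \{x : abx \in E(G)\}, \ N_G(ac), \ N_G(bc)
\end{align*}
are pairwise disjoint subsets of $V(G)$, and hence $d(ab)+d(ac)+d(bc) \leq n$. The proof of the lemma only uses $F_4$-freeness: any $x \in N(ab)\cap N(ac)$ must lie outside $\{a,b,c\}$, and then $\{abc, abx, acx\}$ is a copy of $F_4$ (three triples on four vertices sharing the common vertex $a$), a contradiction. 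The same argument excludes intersections of the other two pairs of neighborhoods.

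Given the lemma, a double counting argument exchanging the order of summation yields
\begin{align*}
\textup{co}_2(G) \;=\; \sum_{uv \in \binom{V(G)}{2}} d(uv)^2 \;=\; \sum_{e \in E(G)} \sum_{uv \subset e} d(uv) \;\leq\; n\cdot |E(G)|.
\end{align*}
By Bollob\'as's theorem~\cite{Bollobascancellative}, $|E(G)| \leq |E(S_n)|$ with equality if and only if $G = S_n$. A direct computation with the tripartition $A\cup B\cup C$ of $S_n$ (sizes summing to $n$) gives
\begin{align*}
\textup{co}_2(S_n) \;=\; |A||B||C|^2 + |A||C||B|^2 + |B||C||A|^2 \;=\; n\cdot |A||B||C| \;=\; n\cdot |E(S_n)|,
\end{align*}
so both inequalities are tight at $S_n$. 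Chaining them produces $\textup{co}_2(G) \leq n|E(G)| \leq n|E(S_n)| = \textup{co}_2(S_n)$, which is the desired bound.

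Uniqueness follows immediately: equality throughout forces $|E(G)| = |E(S_n)|$, so Bollob\'as's uniqueness statement yields $G = S_n$. The density statement $\sigma(\{F_4,F_5\}) = 2/27$ is then a routine substitution, since $\textup{exco}_2(n, \{F_4, F_5\}) = |A||B||C|\cdot n \sim n^4/27$ with balanced parts, while the normalization $\binom{n}{2}(n-2)^2 \sim n^4/2$. I do not expect any serious obstacle: the crux is the pair-disjointness lemma, which is a one-line observation, and the rest is double counting plus an appeal to Bollob\'as. Note that $F_5$-freeness plays no role in the codegree lemma itself, entering only implicitly through the edge-count bound.
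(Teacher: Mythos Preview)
Your proposal is correct and follows essentially the same approach as the paper: the paper likewise bounds the edge weight $w(e)=d(ab)+d(ac)+d(bc)\le n$ via $F_4$-freeness, uses the identity $\textup{co}_2(G)=\sum_{e}w(e)$, and then applies Bollob\'as's bound $|E(G)|\le |E(S_n)|$ together with $\textup{co}_2(S_n)=n|E(S_n)|$. Your explicit disjointness lemma and the verification that $\textup{co}_2(S_n)=n|A||B||C|$ are just slightly more detailed renderings of the same steps.
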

\begin{proof}
Let $G$ be an $F_4$- and $F_5$-free hypergraph with $n$ vertices.
For an edge $e=xyz\in E(G)$, we define its weight $w(e)=d(x,y)+d(x,z)+d(y,z)$. Then, $w(e)\leq n$; otherwise $G$ contains an $F_4$. 
Bollob\'as \cite{Bollobascancellative} proved that $|E(G)|\leq |E(S_n)|$ with equality iff $G=S_n$. This allows us to conclude
\begin{equation*}
\textup{co}_2(G)= \sum_{xy\in \binom{[n]}{2}}d(x,y)^2= \sum_{e\in E(G)}w(e) \leq n |E(G)|\leq n|E(S_n)|=\textup{co}_2(S_n).
\qedhere
\end{equation*}
\end{proof}
 Frankl and F\"uredi~\cite{F5Frankl} proved that for $F_5$-free $3$-graphs, $S_n$ is also the extremal example in the $\ell_1$-norm when $n\geq 3000$. 
 In a follow-up paper~\cite{BalCleLid} we prove that for $F_5$-free $3$-graphs, $S_n$ is also the extremal example in the $\ell_2$-norm provided $n$ is sufficiently large. However, this requires more work than the proof of Theorem~\ref{F4F5} and it is not derived by just applying the corresponding Tur\'an result.  

\section{Preliminaries}
\label{preperation}
\subsection{Terminology and notation}
Let $H$ be a $3$-uniform hypergraph, $x\in V(H)$ and $A,B\subseteq V(H)$ be disjoint sets. 
\begin{enumerate}
    \item $L(x)$ denotes the link graph of $x$, i.e., the graph on $V(H)\setminus \{x\}$ with $ab\in E(L(x))$ iff $abx\in E(H)$. 
    \item $L_A(x)=L(x)[A]$ denotes the induced link graph on $A$. 
    \item $L_{A,B}(x)$ denotes the subgraph of the link graph of $x$ containing only edges between $A$ and $B$. This means $V(L_{A,B}(x))=V(H)\setminus\{x\}$ and $ab\in E(L_{A,B}(x))$ iff $a\in A, b\in B$ and $abx\in E(H)$.
    \item $L^c_{A,B}(x)$ denotes the subgraph of the link graph of $x$ containing only non-edges between $A$ and $B$. This means $V(L_{A,B}(x))=V(H)\setminus\{x\}$ and $ab\in E(L^c_{A,B}(x))$ iff $a\in A, b\in B$ and $abx \not\in E(H)$.
    \item $e(A,B)$ denotes the number of cross-edges between $A$ and $B$, this means  $$e(A,B):=|\{xyz\in E(H): x,y\in A, z\in B \}|+ |\{xyz\in E(H): x,y\in B, z\in A \}|.$$
    \item $e^c(A,B)$ denotes the number of missing cross-edges between $A$ and $B$, this means  $$e^c(A,B):=\binom{|A|}{2}|B|+ \binom{|B|}{2}|A|-e(A,B).$$
    \item For an edge $e=xyz\in E(H)$, we define its \emph{weight} as \[w_H(e)=d(x,y)+d(x,z)+d(y,z).\] 
\end{enumerate}

\subsection{Tool 1: Induced hypergraph removal Lemma}
We will use the induced hypergraph removal lemma of R\"odl and Schacht \cite{indremoval}.

\begin{defn}
Let $\mathcal{F}$ be an arbitrary family of $k$-graphs and $\mathcal{P}$ be a familiy of $k$-graphs closed under relabeling of the vertices.
\begin{itemize}
    \item $\text{Forb}_{ind}(\mathcal{F})$ denotes the family of all $k$-graphs $H$ which contain no induced copy of any member of $\mathcal{F}$.
    \item For a constant $\mu \geq 0$ we say a given $k$-graph $H$ is $\mu$-\emph{far} from $\mathcal{P}$ if every $k$-graph $G$ on the same vertex set $V(H)$ with $|G \triangle H |\leq \mu |V(H)|^k$ satisfies $G\not \in \mathcal{P}$, where $G \triangle H$ denotes the symmetric difference of the edge sets of $G$ and $H$. Otherwise we call $H$ $\mu$-\emph{near} to $\mathcal{P}$.  
\end{itemize}
\end{defn}

\begin{theo}[R\"odl, Schacht \cite{indremoval}]
\label{removallemma}
For every (possibly infinite) family $\mathcal{F}$ of $k$-graphs and every $\mu>0$ there exist constants $c > 0, C > 0$, and $n_0\in \mathbb{N}$ such that the following holds.
Suppose $H$ is a $k$-graph on $n\geq n_0$ vertices. If for every $\ell = 1,\ldots,C$ and every $F \in \mathcal{F}$ on $\ell$ vertices, $H$ contains at most $cn^{\ell}$ induced copies of $F$, then $H$ is $\mu$-near to $\text{Forb}_{ind}(F)$.
\end{theo}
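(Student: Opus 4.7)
The plan is to follow the standard \emph{regularize, clean, count} paradigm for hypergraph removal results, extending the induced graph removal lemma of Alon, Fischer, Krivelevich, and Szegedy to $k$-uniform hypergraphs. Since Theorem~\ref{removallemma} is a qualitative result with an infinite-family quantifier, the argument will first reduce the problem to finitely many forbidden configurations of bounded size and then handle those via the hypergraph regularity method.

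First I would apply the strong hypergraph regularity lemma (R\"odl--Skokan, or equivalently Gowers) to $H$ in order to obtain, in complexity $T = T(\mu)$, a family of partitions $\mathcal{P}^{(j)}$ of $\binom{V(H)}{j}$ for $j = 1, \dots, k-1$ whose blocks are $(\delta_j, r)$-regular with parameters chosen in advance from $\mu$. I would set the threshold $C$ in the statement to be at least $T$, so that any induced configuration that actually survives in the reduced picture has bounded vertex count.

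Next I would clean $H$ by modifying every edge that (i) lies in an irregular block of $\mathcal{P}^{(k-1)}$, (ii) sits over a block whose density is too close to $0$ or $1$ to be deemed stable, or (iii) is supported on an exceptionally sparse cell. Standard accounting shows the total number of edge modifications is at most $\mu n^k$, so the resulting hypergraph $H'$ satisfies $|H \triangle H'| \leq \mu n^k$; it then suffices to show $H' \in \textup{Forb}_{ind}(\mathcal{F})$. Suppose toward a contradiction that $H'$ contains an induced copy of some $F \in \mathcal{F}$. A pigeonholing argument on $\mathcal{P}^{(1)}$, together with the blow-up-like structure forced by cleaning, lets me reduce to the case $|V(F)| \leq C$. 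Because the copy is now supported on dense, regular blocks, the \emph{induced counting lemma} for regular $k$-complexes forces at least $c' n^{|V(F)|}$ induced copies of $F$ to appear in $H$ itself, where $c' = c'(\mu) > 0$ depends only on the regularity parameters. Choosing $c$ smaller than every such $c'$ contradicts the hypothesis.

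The main obstacle is the induced counting lemma: even in its modern formulation it is the technically heaviest ingredient, because it must simultaneously track edges present and edges absent in $F$ using only averaged density and regularity information about the underlying complex. A secondary subtlety is the parameter cascade $\mu \leadsto (\delta_j, r) \leadsto T \leadsto (c, C)$, where the constants must be fixed in a single consistent order so that the cleaning step removes few edges while the counting output still dominates the prescribed $c$, and so that the reduction to bounded-size $F$ does not depend on the (possibly infinite) family $\mathcal{F}$. Once these ingredients are in place, the theorem follows from the contradiction described above.
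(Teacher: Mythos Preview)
The paper does not contain a proof of Theorem~\ref{removallemma}. It is quoted verbatim from R\"odl and Schacht~\cite{indremoval} as ``Tool~1'' in Section~\ref{preperation} and is used as a black box in the proofs of Theorems~\ref{K43stability}, \ref{K53stability}, and \ref{F33stability}. Consequently there is no ``paper's own proof'' to compare your proposal against.

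For what it is worth, your outline---hypergraph regularity lemma, cleaning of irregular/extreme cells, followed by an induced counting lemma to derive a contradiction---is indeed the standard route and is essentially the strategy R\"odl and Schacht use in the cited paper. If your goal were to reconstruct their proof, the sketch is on the right track; the delicate points you flag (the induced counting lemma for $k$-complexes and the parameter hierarchy) are exactly where the work lies. But none of this is needed for the present paper, which simply invokes the result.
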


\subsection{Tool 2: Flag Algebras}
\label{sec:flaga}
In this section we give an insight on how we apply Razborov's flag algebra machinery \cite{flagsRaz} for calculating the codegree squared density. 
The main power of the machinery comes from the possibility of formulating a problem as a semidefinite program and using a computer to solve it.

The method can be applied in various settings such as graphs~\cite{FAgraphs,FAgraphs2}, hypergraphs~\cite{BaberTalbot,codF32falgas}, oriented graphs~\cite{FATrounaments,FAdigraphs}, edge-coloured graphs~\cite{FAcolor,FAColor2}, permutations~\cite{PAPerm1,PAPerm2}, discrete geometry~\cite{FAGeometry,FAGeom2}, or phylogenetic trees~\cite{FAPhylogenetic}.
For a detailed explanation of the flag algebra method in the setting of $3$-uniform hypergraphs see \cite{RavryTuran}. Further, we recommend looking at the survey~\cite{MR3186665} and the expository note~\cite{MR3135939}, both by Razborov. 
Here, we will focus on the problem formulation rather than a formal explanation of the general method.

Let $F$ be a fixed $3$-graph. 
Let $\mathcal{F}$ denote the set of all $F$-free $3$-graphs up to isomorphism.
Denote by $\mathcal{F}_\ell$ all $3$-graphs in $\mathcal{F}$ on $\ell$ vertices.
For two $3$-graphs $F_1$ and $F_2$, denote by $P(F_1,F_2)$ the probability that $|V(F_1)|$ vertices chosen uniformly at random from $V(F_2)$ induce a copy of $F_1$.  
A sequence of $3$-graphs $(G_n)_{n \geq 1}$ of increasing orders is \emph{convergent}, if $\lim_{n \to \infty}P(H,G_n)$ exists for every $H \in \mathcal{F}$. Notice that if this limit exists, it is in $[0,1]$. 

For readers familiar with flag algebras and its usual notation, for a convergent sequence $(G_n)_{n \geq 1}$ of $n$-vertex $3$-graphs $G_n$, we get
\begin{align}\label{eq:fa}
\lim_{n \to \infty} \frac{\textup{co}_2(G_n)}{\binom{n}{2}(n-2)^2}
=
\left\llbracket
\left(
\vc{
\begin{tikzpicture}
\draw
(0,0) coordinate(1) node[vtxl,label=below:1](a){}
(1,0) coordinate(2) node[vtxl,label=below:2](b){}
(0.5,1) coordinate(3) node[vtx](c){}
;
\draw[hyperedge] (1) to[out=40,in=130] (2) to[out=120,in=280] (3) to[out=260,in=50] (1);
\draw
(1) node[vtxl]{}
(2) node[vtxl]{}
;
\end{tikzpicture}
}
\right)^2
\right\rrbracket_{1,2}
=
\frac{1}{6}
\vc{
\begin{tikzpicture}
\draw
(0,0) coordinate(1) node[vtx](a){}
(1,0) coordinate(2) node[vtx](b){}
(1,1) coordinate(3) node[vtx](c){}
(0,1) coordinate(4) node[vtx](d){}
;
\draw[hyperedge] (1) to[out=30,in=90] (2) to[out=100,in=260] (3) to[out=260,in=30] (1);
\draw[hyperedge] (1) to[out=90,in=150] (2) to[out=150,in=280] (4) to[out=280,in=90] (1);
\end{tikzpicture}
}
+
\frac{1}{2}
\Kfourthreeminus
+
\Kfourthree
,
\end{align}
where $\llbracket\cdot\rrbracket$ denotes the averaging operator and the terms on the right are interpreted as
\[
\lim_{n\to \infty} \frac{1}{6}P(K_4^{3=},G_n) + \frac{1}{2}P(K_4^{3-},G_n) + P(K_4^3,G_n), 
\]
where $K_4^{3=}$ is the $3$-graph with $4$ vertices and $2$ edges and $K_4^{3-}$ the $3$-graph with $4$ vertices and $3$ edges, also known as $F_4$.
It is a routine application of flag algebras to find an upper bound on the right-hand side of \eqref{eq:fa}.

For readers less familiar with flag algebras, the following paragraphs give a slightly less formal explanation of the problem formulation.
Let $G$ be a 3-graph. Let $\theta$ be an injective function $\{1,2\} \to V(G)$. 
In other words, $\theta$ labels two distinct vertices in $G$. 
We call the pair $(G,\theta)$ a \emph{labelled 3-graph} although only two vertices in $G$ are labelled by $\theta$.

Let $(H,\theta')$ and $(G,\theta)$ be two labelled $3$-graphs. 
Let $X$ be a subset of $V(G)\setminus \Img \theta$ of size $|V(H)|-2$ chosen uniformly at random.
By $P((H,\theta'),(G,\theta))$ we denote the probability that the labelled subgraph of $G$ induced by $X$ and the two labelled vertices, i.e., $(G[X\cup \Img \theta],\theta)$, is isomorphic to $(H,\theta')$, where the isomorphism maps $\theta(i)$ to $\theta'(i)$ for $i \in \{1,2\}$.

Let $E$ be a labelled 3-graph consisting of three vertices, two of them labelled, and one edge containing all three vertices.
Notice that $P(E,(G,\theta)) (n-2)$ is the codegree of $\theta(1)$ and $\theta(2)$ in a $3$-graph $G$.
The square of the codegree of $\theta(1)$ and $\theta(2)$ is $\left(P(E,(G,\theta)) (n-2)\right)^2$.
One of the tricks in flag algebras is that calculating $P(E,(G,\theta))^2$ in $G$ of order $n$ can be done within error $O(1/n)$ by selecting two distinct vertices in addition to $\theta(1)$ and $\theta(2)$ and examining subgraphs on four vertices instead.
In our case, it looks like the following, where $P(H, (G,\theta))$ is depicted simply as $H$.
\begin{align}\label{eq:faexpanded}
\left(
\vc{
\begin{tikzpicture}
\draw
(0,0) coordinate(1) node[vtxl,label=below:1](a){}
(1,0) coordinate(2) node[vtxl,label=below:2](b){}
(0.5,1) coordinate(3) node[vtx](c){}
;
\draw[hyperedge] (1) to[out=40,in=130] (2) to[out=120,in=280] (3) to[out=260,in=50] (1);
\draw
(1) node[vtxl]{}
(2) node[vtxl]{}
;
\end{tikzpicture}
}
\right)^2
=
\vc{
\begin{tikzpicture}
\draw
(0,0) coordinate(1) node[vtxl,label=below:1](a){}
(1,0) coordinate(2) node[vtxl,label=below:2](b){}
(1,1) coordinate(3) node[vtx](c){}
(0,1) coordinate(4) node[vtx](d){}
;
\draw[hyperedge] (1) to[out=30,in=90] (2) to[out=100,in=260] (3) to[out=260,in=30] (1);
\draw[hyperedge] (1) to[out=90,in=150] (2) to[out=150,in=280] (4) to[out=280,in=90] (1);
\draw
(1) node[vtxl]{}
(2) node[vtxl]{}
;
\end{tikzpicture}
}
+
\vc{
\begin{tikzpicture}
\draw
(0,0) coordinate(1) node[vtxl,label=below:1](a){}
(1,0) coordinate(2) node[vtxl,label=below:2](b){}
(1,1) coordinate(3) node[vtx](c){}
(0,1) coordinate(4) node[vtx](d){}
;
\draw[hyperedge] (1) to[out=30,in=90] (2) to[out=100,in=260] (3) to[out=260,in=30] (1);
\draw[hyperedge] (1) to[out=90,in=150] (2) to[out=150,in=280] (4) to[out=280,in=90] (1);
\draw[hyperedge] (1) to[out=60,in=200] (3) to[out=200,in=330] (4) to[out=330,in=60,looseness=1.5] (1);
\draw
(1) node[vtxl]{}
(2) node[vtxl]{}
;
\end{tikzpicture}
}
+
\vc{
\begin{tikzpicture}
\draw
(0,0) coordinate(1) node[vtxl,label=below:1](a){}
(1,0) coordinate(2) node[vtxl,label=below:2](b){}
(1,1) coordinate(3) node[vtx](c){}
(0,1) coordinate(4) node[vtx](d){}
;
\draw[hyperedge] (1) to[out=30,in=90] (2) to[out=100,in=260] (3) to[out=260,in=30] (1);
\draw[hyperedge] (1) to[out=90,in=150] (2) to[out=150,in=280] (4) to[out=280,in=90] (1);
\draw[hyperedge] (2) to[out=120,in=230,,looseness=1.8] (3) to[out=230,in=300,looseness=1.8] (4) to[out=300,in=120,looseness=1.8] (2);
\draw
(1) node[vtxl]{}
(2) node[vtxl]{}
;
\end{tikzpicture}
}
+
\vc{
\begin{tikzpicture}
\draw
(0,0) coordinate(1) node[vtxl,label=below:1](a){}
(1,0) coordinate(2) node[vtxl,label=below:2](b){}
(1,1) coordinate(3) node[vtx](c){}
(0,1) coordinate(4) node[vtx](d){}
;
\draw[hyperedge] (1) to[out=30,in=90] (2) to[out=100,in=260] (3) to[out=260,in=30] (1);
\draw[hyperedge] (1) to[out=90,in=150] (2) to[out=150,in=280] (4) to[out=280,in=90] (1);
\draw[hyperedge] (1) to[out=60,in=200] (3) to[out=200,in=330] (4) to[out=330,in=60,looseness=1.5] (1);
\draw[hyperedge] (2) to[out=120,in=230,,looseness=1.8] (3) to[out=230,in=300,looseness=1.8] (4) to[out=300,in=120,looseness=1.8] (2);
\draw
(1) node[vtxl]{}
(2) node[vtxl]{}
;
\end{tikzpicture}
}
+o(1)
\end{align}
The next step is to sum over all possible choices for $\theta$, there are $n(n-1)$ of them, and divide by $2$ since the codegree squared sum is over unordered pairs of vertices, unlike $\theta$. When summing over all possible $\theta$, one could look at all subsets of vertices of size $4$ of $G$ and see what the probability is that randomly labelling two vertices among these four by $\theta$ gives one of the labelled $3$-graphs from the right hand side of \eqref{eq:faexpanded}. 
This gives the coefficients on the right-hand side of \eqref{eq:fa}.

We use flag algebras to prove Lemmas~\ref{K43flag}, \ref{flag F33}, and \ref{K53flag}. 
The calculations are computer assisted. 
We use CSDP~\cite{csdp} for finding numerical solutions of semidefinite programs and SageMath~\cite{sagemath} for rounding the numerical solutions to exact ones.
The files needed to perform the corresponding calculations are available at \oururl.

\section{General results: Proofs of Propositions~\ref{coslim}, \ref{cossupersat} and \ref{blowup}}
\label{General results}

\subsection{The limit exists}
\begin{proof}[Proof of Proposition~\ref{coslim}]
Let $n\geq k$ be a positive integer and let $G$ be an $\mathcal{F}$-free $k$-graph on vertex set $[n]$ satisfying $\textup{co}_2(G)=\textup{exco}_2(n,\mathcal{F})$. Take $S$ to be a randomly chosen $(n-1)$-subset of $V(G)$. Now, we calculate the expectation of $\textup{co}_2(G[S])$,
\begin{align*}
\mathbb{E}[\textup{co}_2(G[S])] &= \sum_{T\in \binom{[n]}{k-1}}\mathbb{E}[ \mathbf{1}_{\{ T \subset S\}} d_{G[S]}^2(T)] =  \sum_{T\in \binom{[n]}{k-1}}\mathbb{P}(T\subset S) \mathbb{E}[ d_{G[S]}^2(T) \vert T\subset S ] \\ &= \sum_{T\in \binom{[n]}{k-1}}\frac{\binom{n-1}{k-1}}{\binom{n}{k-1}} \mathbb{E}[ d_{G[S]}^2(T) \vert T\subset S ] \geq \sum_{T\in \binom{[n]}{k-1}}\frac{\binom{n-1}{k-1}}{\binom{n}{k-1}} \mathbb{E}[ d_{G[S]}(T) \vert T\subset S ]^2 \\
&=  \sum_{T\in \binom{[n]}{k-1}} \frac{\binom{n-1}{k-1}}{\binom{n}{k-1}}\left(d_G(T) \frac{n-k}{n-k+1} \right)^2 = \frac{\binom{n-1}{k-1}}{\binom{n}{k-1}}\left(\frac{n-k}{n-k+1} \right)^2 \textup{co}_2(G).
\end{align*}
We used that $d_{G[S]}(T)$ conditioned on $T\subset S$ has hypergeometric distribution. By averaging, we conclude that there exists an $(n-1)$-vertex subset $S'\subset V(G)$ with $\textup{co}_2(G[S'])\geq \mathbb{E}[\textup{co}_2(G[S])]$. Thus, we conclude that $G[S']$ is an $(n-1)$-vertex $k$-graph satisfying 
\begin{align*}
\textup{co}_2(G[S'])\geq \frac{\binom{n-1}{k-1}}{\binom{n}{k-1}}\left(\frac{n-k}{n-k+1} \right)^2 \textup{co}_2(G).
\end{align*}
Therefore, since $G[S']$ is $\mathcal{F}$-free,
\[
\frac{\textup{exco}_2(n-1,\mathcal{F})}{\binom{n-1}{k-1}(n-k)^2}\geq \frac{\textup{co}_2(G[S'])}{\binom{n-1}{k-1}(n-k)^2}\geq \frac{\textup{co}_2(G)}{\binom{n}{k-1}(n-k+1)^2}= \frac{\textup{exco}_2(n,\mathcal{F})}{\binom{n}{k-1}(n-k+1)^2}.
\qedhere
\]
\end{proof}

\subsection{Supersaturation}
In this section we prove Proposition~\ref{cossupersat}. We will make use of the following tail bound on the hypergeometric distribution. 
\begin{lemma}[e.g.~\cite{Jansonhypergeometric}{ p.29}]
\label{hypergeometrictails}
Let $\beta,\lambda>0$ with $\beta+\lambda<1$. Suppose that $X\subseteq [n]$ and $|X|\geq (\beta+\lambda)n$. Then
\begin{align*}
\left\vert \left\{ S\in \binom{[n]}{m}: |S \cap X|\leq \beta m \right\} \right\vert \leq \binom{n}{m}e^{-\frac{\lambda^2m}{3(\beta+\lambda)}}\leq \binom{n}{m}e^{-\lambda^2m/3}.
\end{align*}
\end{lemma}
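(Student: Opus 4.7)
The plan is to prove this bound probabilistically. Let $S$ be a uniformly chosen $m$-element subset of $[n]$ and set $Y := |S \cap X|$. Then
\[
\left|\left\{S \in \binom{[n]}{m} : |S \cap X| \leq \beta m\right\}\right| \;=\; \binom{n}{m}\,\mathbb{P}(Y \leq \beta m),
\]
and $Y$ has the hypergeometric distribution with parameters $n$, $|X|$, and $m$. Writing $p := |X|/n \geq \beta + \lambda$, its mean is $\mathbb{E}[Y] = pm$, so the task reduces to a lower-tail concentration inequality for $Y$ around $pm$, after which we multiply through by $\binom{n}{m}$.

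The key step is to reduce to the binomial via Hoeffding's classical majorization: if $Z \sim \mathrm{Bin}(m, p)$, then for every convex $\varphi$ one has $\mathbb{E}[\varphi(Y)] \leq \mathbb{E}[\varphi(Z)]$. Applied to $\varphi(t) = e^{-st}$ for $s > 0$ this gives $\mathbb{E}[e^{-sY}] \leq \mathbb{E}[e^{-sZ}]$, so the standard Chernoff moment-generating-function argument for $\mathrm{Bin}(m,p)$ transfers verbatim to $Y$. In particular, the multiplicative Chernoff lower-tail bound yields
\[
\mathbb{P}(Y \leq (1-\delta)\, p m) \;\leq\; \exp\!\left(-\frac{\delta^2\, p m}{3}\right)
\]
for every $\delta \in (0,1)$, where the $1/3$ comes from the standard estimate $(1+x)\log(1+x) - x \geq x^2/3$ valid on $(-1,0)$.

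To finish, choose $\delta := 1 - \beta/p \in (0,1)$, so that $(1-\delta)pm = \beta m$. Setting $t := p - \beta \geq \lambda$, an elementary computation shows that $t \mapsto t^2/(t+\beta)$ is increasing on $(0,\infty)$ (its derivative is $(t^2+2t\beta)/(t+\beta)^2 > 0$), and hence
\[
\frac{\delta^2\, p m}{3} \;=\; \frac{(p-\beta)^2\, m}{3 p} \;=\; \frac{t^2}{3(t+\beta)}\, m \;\geq\; \frac{\lambda^2\, m}{3(\beta+\lambda)}.
\]
Combining these three displays gives the first claimed inequality; the weaker second bound follows immediately since $\beta+\lambda < 1$.

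The only mild subtlety is the choice of Chernoff constant: the $1/3$ version stated above is the most convenient for bookkeeping (and matches common references such as the one cited), even though a sharper $1/2$ is available in the lower tail and would yield a somewhat stronger bound. Everything else is routine: the probabilistic rephrasing at the start, Hoeffding's majorization from hypergeometric to binomial, and the one-line monotonicity check that converts the bound on $p$ into the clean exponent $\lambda^2 m / (3(\beta+\lambda))$.
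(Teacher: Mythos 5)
Your proposal is correct. Note that the paper does not prove this lemma at all: it is quoted with a pointer to the Janson--\L{}uczak--Ruci\'nski book, where the standard treatment is exactly of the kind you give, namely transferring binomial Chernoff bounds to the hypergeometric distribution. Your chain of steps checks out: the counting-to-probability rephrasing, Hoeffding's convex-order domination of sampling without replacement by $\mathrm{Bin}(m,p)$ with $p=|X|/n\geq\beta+\lambda$ (which lets the moment-generating-function argument go through verbatim), the lower-tail bound $\mathbb{P}(Y\leq(1-\delta)pm)\leq\exp(-\delta^2pm/3)$ (valid since $(1+x)\log(1+x)-x\geq x^2/2\geq x^2/3$ on $(-1,0)$, so your stated $1/3$ version is a weakening of a true bound), the choice $\delta=1-\beta/p\in(0,1)$, and the monotonicity of $t\mapsto t^2/(t+\beta)$ turning $p\geq\beta+\lambda$ into the exponent $\lambda^2m/(3(\beta+\lambda))$; the final inequality $e^{-\lambda^2m/(3(\beta+\lambda))}\leq e^{-\lambda^2m/3}$ indeed uses only $\beta+\lambda<1$. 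The only stylistic remark is that using the sharper lower-tail constant $1/2$ would give $\lambda^2m/(2(\beta+\lambda))$, which is stronger than what the lemma asserts, so nothing is lost by your bookkeeping choice; your argument establishes the lemma exactly as stated, which is all the paper's applications (Lemma~\ref{supersatlemma}) require.
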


Mubayi and Zhao \cite{Mubayicode} used Lemma~\ref{hypergeometrictails} to prove a supersaturation result for the minimum codegree threshold. We adapt their proof to our setting.
\begin{lemma}
\label{supersatlemma}
Let $\alpha>0$, $\varepsilon>0$ and $k\geq 3$. Then there exists $m_0$ such that the following holds. If $n\geq m \geq m_0$ and $G$ is a $k$-graph on $[n]$ with $\textup{co}_2(G)\geq (\alpha+\varepsilon) \binom{n}{k-1}(n-k+1)^2$, then the number of $m$-sets $S$ satisfying $\textup{co}_2(G[S]) > \alpha \binom{m}{k-1}(m-k+1)^2$ is at least $\frac{\varepsilon}{4} \binom{n}{m}$.   \end{lemma}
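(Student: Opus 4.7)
The plan is a clean averaging argument that only needs the expectation computation already implicit in the proof of Proposition~\ref{coslim}, together with the trivial pointwise bound on $\textup{co}_2$. The hypergeometric tail bound of Lemma~\ref{hypergeometrictails} is not strictly necessary here; it would become relevant only if one wanted to strengthen the conclusion from ``a positive fraction of $m$-sets are good'' to ``almost every $m$-set is good.''

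Let $S$ be a uniformly random $m$-subset of $V(G)=[n]$ and write $M := \binom{m}{k-1}(m-k+1)^2$, the largest possible value of $\textup{co}_2(H)$ for any $m$-vertex $k$-graph $H$. The first step is to lower-bound $\mathbb{E}[\textup{co}_2(G[S])]$. For each $T \in \binom{[n]}{k-1}$, conditional on $T \subseteq S$ the codegree $d_{G[S]}(T)$ is hypergeometric with mean $d_G(T)\cdot(m-k+1)/(n-k+1)$, so Jensen's inequality gives $\mathbb{E}[d_{G[S]}(T)^2 \mid T \subseteq S] \geq d_G(T)^2\bigl((m-k+1)/(n-k+1)\bigr)^2$. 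Multiplying by $\mathbb{P}(T \subseteq S) = \binom{m}{k-1}/\binom{n}{k-1}$ and summing over $T$ (exactly the computation carried out in the proof of Proposition~\ref{coslim}) yields
\[
\mathbb{E}[\textup{co}_2(G[S])] \;\geq\; \frac{\binom{m}{k-1}(m-k+1)^2}{\binom{n}{k-1}(n-k+1)^2}\,\textup{co}_2(G) \;\geq\; (\alpha+\varepsilon)\,M,
\]
using the hypothesis on $\textup{co}_2(G)$ at the end.

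The second step is a one-line Markov-style argument. Let $p$ be the proportion of $m$-subsets $S$ with $\textup{co}_2(G[S]) > \alpha M$. Since $\textup{co}_2(G[S]) \leq M$ holds deterministically, splitting the expectation over ``good'' and ``bad'' $S$ gives $(\alpha+\varepsilon)\,M \leq p\,M + (1-p)\alpha\,M$, i.e.\ $p \geq \varepsilon/(1-\alpha)$. The hypothesis combined with the global bound $\textup{co}_2(G) \leq \binom{n}{k-1}(n-k+1)^2$ forces $\alpha+\varepsilon \leq 1$, hence in fact $p \geq \varepsilon \geq \varepsilon/4$. This produces the desired $\tfrac{\varepsilon}{4}\binom{n}{m}$ good $m$-sets.

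I do not anticipate any real obstacle: the expectation estimate is essentially already available from Proposition~\ref{coslim}, and the rest is a single averaging step. The parameter $m_0$ is needed only to guarantee $m \geq k$ so that the hypergeometric random variables are well-defined; in principle one could take $m_0 = k$. If one instead wanted the stronger statement that almost every $m$-subset is good (which is presumably why Lemma~\ref{hypergeometrictails} is recorded in the toolbox), one would apply the tail bound to each individual codegree $d_{G[S]}(T)$ and union-bound over the $\binom{m}{k-1}$ relevant $T$'s, which is where a genuine lower bound on $m$ enters.
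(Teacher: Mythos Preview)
Your argument is correct and in fact cleaner than the paper's. The paper proves Lemma~\ref{supersatlemma} via the hypergeometric tail bound (Lemma~\ref{hypergeometrictails}): it calls an $m$-set $S$ \emph{bad for $T$} if $|d(T)\cap S|$ falls noticeably below its expected value, union-bounds over all $T\in\binom{[n]}{k-1}$ to show at most $\tfrac{\varepsilon}{4}\binom{n}{m}$ sets are bad for some $T$, then separately double-counts $\sum_{T\subset S}d_G(T)^2$ to find at least $\tfrac{\varepsilon}{2}\binom{n}{m}$ sets with a large \emph{global}-codegree contribution; the good sets in the intersection are then shown to satisfy the required bound on $\textup{co}_2(G[S])$.

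You bypass the tail bound entirely by observing that the Jensen-based expectation estimate already used in the proof of Proposition~\ref{coslim} gives $\mathbb{E}[\textup{co}_2(G[S])]\ge(\alpha+\varepsilon)M$ directly, and then the deterministic ceiling $\textup{co}_2(G[S])\le M$ lets a single Markov split finish. This yields the sharper conclusion $p\ge \varepsilon/(1-\alpha)\ge\varepsilon$ and needs no real lower bound on $m$ beyond $m\ge k$, whereas the paper's argument genuinely consumes an $m_0$ large enough to kill the exponential error in the tail bound. Your remark that the concentration route would be needed for the stronger ``almost all $m$-sets'' statement is exactly right; for the lemma as stated, your shortcut suffices.
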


\begin{proof}
Given a $(k-1)$-element set $T\subset [n]$, we call an $m$-set $S$ with $T \subset S \subset [n]$ \textit{bad for} $T$ if $|d(T)\cap S|\leq \left( \frac{d(T)}{n-k+1}- \frac{\varepsilon}{6}\right) (m-k+1)$. An $m$-set is \textit{bad} if it is bad for some $T$. Otherwise, it is \textit{good}. We will show that there are only few bad sets. Denote by $\Phi$ the number of bad $m$-sets, and let $\Phi_T$ be the number of $m$-sets that are bad for $T$. Then, by applying Lemma~\ref{hypergeometrictails} with $\beta= \frac{d(T)}{n-k+1}- \frac{\varepsilon}{6}$ and $\lambda=\varepsilon/7$, we get
\begin{align*}
\Phi &\leq  \sum_{T \in \binom{[n]}{k-1}} \Phi_T = \sum_{T \in \binom{[n]}{k-1}} \left\vert \left\{ S'\in \binom{[n]\setminus T}{m-k+1}: |d(T) \cap S'|\leq \left( \frac{d(T)}{n-k+1}- \frac{\varepsilon}{6}\right) (m-k+1) \right\} \right\vert \\
&\leq \sum_{T \in \binom{[n]}{k-1}} \binom{n-k+1}{m-k+1} \exp \left(-\frac{\varepsilon^2(m-k+1)}{147}  \right) \leq 
 \binom{n}{k-1}\binom{n-k+1}{m-k+1} \exp \left(-\frac{\varepsilon^2(m-k+1)}{147}  \right) \\
 &= \binom{n}{m}\binom{m}{k-1} \exp \left(-\frac{\varepsilon^2(m-k+1)}{147}  \right) \leq \frac{\varepsilon}{4}\binom{n}{m},
\end{align*}
where the last inequality holds for $m$ large enough.
So the number of bad $m$-sets is at most $\frac{\varepsilon}{4}\binom{n}{m}$. 
Now let $\ell \binom{n}{m}$ be the number of $m$-sets $S$ satisfying 
\begin{align}
\label{good*mset}
    \sum_{T \in \binom{S}{k-1}} d_G^2(T)\geq \left( \alpha +\frac{\varepsilon}{2}\right) \binom{m}{k-1}(n-k+1)^2.  
\end{align}
On one side
\begin{align*}
\sum_{ |S|=m}  \sum_{T \in \binom{S}{k-1}} d_G^2(T)= \binom{n-k+1}{m-k+1} \textup{co}_2(G) = \binom{n-k+1}{m-k+1}\binom{n}{k-1}(n-k+1)^2 (\alpha+\varepsilon).
\end{align*}
On the other side,
\begin{align*}
\sum_{|S|=m}  \sum_{T \in \binom{S}{k-1}} d_G^2(T) &\leq \left(\alpha+\frac{\varepsilon}{2}\right) \binom{m}{k-1}(n-k+1)^2 \binom{n}{m} + \ell \binom{m}{k-1}(n-k+1)^2 \binom{n}{m}\\
&= \left(\alpha+\frac{\varepsilon}{2}+\ell\right) \binom{m}{k-1}(n-k+1)^2 \binom{n}{m}.
\end{align*}
By this double counting argument, we conclude $\ell\geq \varepsilon/2$. Since the number of bad $m$-sets is at most $\frac{\varepsilon}{4}\binom{n}{m}$, there are at least $\frac{\varepsilon}{4}\binom{n}{m}$ good $m$-sets satisfying \eqref{good*mset}. All of these $m$-sets satisfy

\begin{align*}
    \textup{co}_2(G[S]) &=\sum_{T \in \binom{S}{k-1}} d_{G[S]}^2(T) \geq \sum_{T \in \binom{S}{k-1}} \left( \left( \frac{d_G(T)}{n-k+1}- \frac{\varepsilon}{6}\right) (m-k+1)\right)^2 \\
    &= \frac{(m-k+1)^2}{(n-k+1)^2} \sum_{T \in \binom{S}{k-1}} \left( d_G(T)- \frac{\varepsilon}{6}(n-k+1)\right)^2  \\ &\geq 
    \frac{(m-k+1)^2}{(n-k+1)^2} \sum_{T \in \binom{S}{k-1}} \left(d_G^2(T)- \frac{\varepsilon}{3}(n-k+1)^2\right)\\
    &\geq \frac{(m-k+1)^2}{(n-k+1)^2}  \left( \left( \alpha +\frac{\varepsilon}{2}\right) \binom{m}{k-1}(n-k+1)^2 - \binom{m}{k-1}\frac{\varepsilon}{3}(n-k+1)^2 \right) \\ &> \alpha \binom{m}{k-1}(m-k+1)^2,
\end{align*}
proving the statement of this lemma.
\end{proof}
\begin{proof}[Proof of Proposition~\ref{cossupersat}]
This proof follows Erd\H{o}s and Simonovits's proof \cite{Supersaterdos} of the supersa\-tu\-ration result for the Tur\'an density.  \par
Let $F$ be a $k$-graph on $f$ vertices, $\varepsilon>0$ and $G$ be an $n$-vertex $k$-graph satisfying $\textup{co}_2(G)>(\sigma(F)+\varepsilon) \binom{n}{k-1}n^2$ for $n$ large enough. By Lemma~\ref{supersatlemma}, there exists an $m_0$ such that for $m\geq m_0$ the number of $m$-sets $S$ satisfying $\textup{co}_2(G[S])>(\sigma(F)+\varepsilon/2) \binom{m}{k-1}(m-k+1)^2$ is at least $\frac{\varepsilon}{8} \binom{n}{m}$. There exists some fixed $m_1\geq m_0$ such that $\textup{exco}_2(m_1,F)\leq (\sigma(F)+\varepsilon/2) \binom{m_1}{k-1}(m_1-k+1)^2$. Thus, there are at least $\frac{\varepsilon}{8} \binom{n}{m_1}$ $m_1$-sets $S$ such that $G[S]$ contains $F$. Each copy of $F$ may be counted at most $\binom{n-f}{m_1-f}$ times. Therefore, the number of copies for $F$ is at least
\begin{align*}
    \frac{\frac{\varepsilon}{8} \binom{n}{m_1}}{\binom{n-f}{m_1-f}}=\delta \binom{n}{f},
\end{align*}
for $\delta=\frac{\varepsilon}{8\binom{m_1}{f}}$.
\end{proof}

\subsection{Proof of Corollary~\ref{blowup} and Proposition~\ref{jumps}}
Now we use a standard argument to show that blowing-up a $k$-graph does not change the codegree squared density. We will follow the proof of the analogous Tur\'an result given in \cite{Keevashsurvey}. 
\begin{proof}[Proof of Corollary~\ref{blowup}]
Since $H\subset H(t)$, $\textup{exco}_2(n,H(t))\leq \textup{exco}_2(n,H)$ holds trivially. Thus, $\sigma(H(t))\leq \sigma(H)$.

For the other direction, let $\varepsilon>0$ and $G$ be an $n$-vertex $k$-uniform hypergraph satisfying $\textup{co}_2(G)/(\binom{n}{k-1}(n-k+1)^2)> \sigma(H)+\varepsilon$. Then, by Proposition~\ref{cossupersat}, $G$ contains at least $\delta \binom{n}{v(H)}$ copies of $H$ for $\delta=\delta(\varepsilon,k)>0$. We create an auxiliary $v(H)$-graph $F$ on the vertex set $V(G)$. A $v(H)$-set $A\subset V(G)$ is an edge in $F$ iff $G[A]$ contains a copy of $H$. The auxiliary hypergraph $F$ has density at least $\delta/v(H)!$. Thus, as it is well-known~\cite{Erdos3partite}, for any $t'>0$ as long as $n$ is large enough, $F$ contains a copy of $K_{v(H)}^{v(H)}(t')$, the complete $v(H)$-partite $v(H)$-graph with $t'$ vertices in each part. We choose $t'$ large enough such that the following is true. We colour each edge of $K_{v(H)}^{v(H)}(t')$ by one of $v(H)!$ colours, depending on which of the $v(H)!$ orders the vertices of $H$ are mapped to in the corresponding copy of $H$ in $G$. By a classical result in Ramsey theory (for a density version see \cite{Erdos3partite}), there is a monochromatic copy of $K_{v(H)}^{v(H)}(t)$, which contains a copy of $H(t)$ in $G$. We conclude $\sigma(H(t))\leq \sigma(H)+\varepsilon$ for all $\varepsilon>0$.      
\end{proof}

\begin{proof}[Proof of Proposition~\ref{jumps}]
Let $H$ be a $k$-graph. For any $k$-graph $G$, we have by the Cauchy-Schwarz inequality
\begin{align*}
    \textup{co}_2(G)= \sum_{T\in \binom{[n]}{k-1}}d_G(T)^2\geq \frac{\left(\sum_{T\in \binom{[n]}{k-1}}d_G(T)\right)^2}{\binom{n}{k-1}}=\frac{\left(k|E(G)|\right)^2}{\binom{n}{k-1}}.
\end{align*}
Applying this for an $H$-free hypergraph $G$, and scaling, we obtain $\sigma(H)\geq \pi(H)^2$. For \\ $\sigma(H)\leq\pi(H)$ we use
\begin{align*}
\textup{co}_2(G)= \sum_{T\in \binom{[n]}{k-1}}d_G(T)^2= \sum_{e\in E(G)}w_G(e) \leq kn |E(G)|,
\end{align*}
where $w_G(e):=\sum_{T\in \binom{e}{k-1}}d_G(T)$. After scaling this implies $\sigma(H)\leq \pi(H)$, completing the proof of part $(i)$. 

Erd\H{o}s~\cite{Erdos3partite} proved that the Tur\'an density of a $k$-partite $k$-graph is $0$. In this case, the codegree squared density is also $0$ by part (i). If $H$ is not $k$-partite, then the complete $k$-partite hypergraph is $H$-free providing a construction for lower bounds. Hence, as it was observed by Erd\H{o}s~\cite{Erdos3partite}, the Tur\'an density of $H$ is at least $k!/k^k$. Similarly, we get $\sigma(H)\geq (k-1)/k^k$.
\end{proof}


\section{Cliques}
\label{cliques}
In this section we will prove Theorems~\ref{K43stability} and \ref{K53stability}.

\subsection{Proof of Theorem~\ref{K43stability}}
Flag algebras give us the following results for $K_4^3$.
\begin{lemma}
\label{K43flag}
For all $\varepsilon>0$ there exists $\delta>0$ and $n_0$ such that for all $n\geq n_0$: if $G$ is a $K_4^3$-free $3$-uniform graph on $n$ vertices with $\textup{co}_2(G)\geq (1-\delta)\frac{1}{3} n^4/2$, then the densities of all $3$-graphs on $4,5$ and $6$ vertices in $G$ that are not contained in $C_n$ are at most $\varepsilon$. Additionally, 
\begin{align*}
    \sigma(K_4^3)=\frac{1}{3}.
\end{align*}
\end{lemma}
The flag algebra calculation proving Lemma~\ref{K43flag} is computer assisted.
The calculation is available at \oururl. For proving Theorem~\ref{K43stability} we will make use of the following stability result due to Pikhurko~\cite{PikhurkoK43}. 
\begin{theo}[Pikhurko~\cite{PikhurkoK43}]
\label{K43stabilityPik}
For every $\varepsilon>0$ there exists $\delta>0$ and $n_0$ such that for every $n > n_0$, if $G$ is a $K_4^3$-free $3$-uniform hypergraph on $n$ vertices not spanning exactly one edge on four vertices and with 
\begin{align*}
e(G)\geq \left(\frac{5}{9}-\delta \right) \binom{n}{3}, 
\end{align*}
then $G$ is $\varepsilon$-near to $C_n$.
\end{theo}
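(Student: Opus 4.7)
The plan is to derive a stability counterpart of Razborov's flag algebra proof that $\pi(K_4^3, J) = 5/9$, where $J$ denotes the induced $4$-vertex $3$-graph with exactly one edge; since $C_n$ is the unique asymptotically extremal $(K_4^3, J)$-free $3$-graph, the goal is to quantify how rapidly near-extremal hypergraphs must approach $C_n$ in edit distance as the edge density approaches $\tfrac{5}{9}\binom{n}{3}$.

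First I would extract quantitative slackness bounds from Razborov's SDP proof of $\pi(K_4^3, J) \leq 5/9$. The proof writes $5/9 - \pi(K_4^3, J)$ as a sum of squares of flag-density polynomials, so whenever $e(G) \geq \left(\tfrac{5}{9}-\delta\right)\binom{n}{3}$, each such square contributes at most $O(\delta)$ to the slack. Unpacking these inequalities in the standard way yields: for every fixed $k$ and every $k$-vertex $3$-graph $F$ that does not embed into $C_n$, the induced density $P(F,G)$ tends to $0$ as $\delta \to 0$. Choosing $k$ large enough to include every structural obstruction to the tripartite pattern of $C_n$, we obtain a finite family $\mathcal{F}$ of forbidden induced sub-hypergraphs whose induced density in $G$ is $o_\delta(1)$.

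Next I would apply the induced hypergraph removal lemma (Theorem~\ref{removallemma}) with forbidden family $\mathcal{F}$ to produce a hypergraph $G^*$ on the same vertex set, at edit distance $o(n^3)$ from $G$, containing no induced copy of any $F \in \mathcal{F}$. The heart of the argument is then a structural claim: every sufficiently large $\mathcal{F}$-induced-free hypergraph with edge density near $5/9$ must coincide with $C_n$ up to isomorphism. This is where forbidding the induced $4$-vertex $1$-edge hypergraph $J$ plays the decisive role; without $J$-freeness, Kostochka's construction~\cite{K43Kostochka} produces exponentially many $K_4^3$-free near-extremal hypergraphs, and the induced constraint is precisely what collapses this family to $C_n$ alone.

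The main obstacle is the structural characterization: recovering the tripartition $V_1 \cup V_2 \cup V_3$ of $V(G^*)$ and verifying each of the four edge types of $C_n$. A natural route is to select a typical vertex $v$ and analyze the link graph $L(v)$ of density near $5/9$; the interplay is delicate because the $K_4^3$-free condition does not translate into $K_4$-freeness of $L(v)$ but only into the relational condition that every triangle of $L(v)$ sits on a non-edge of $G$, so no clean local Erd\H{o}s-Simonovits stability for $K_4$-free graphs applies. Instead one leans on the global rigidity that every induced $4$-set of $G^*$ spans $0$, $2$, $3$, or $4$ edges (never $1$), combined with the $\mathcal{F}$-freeness, to force each $L(v)$ into the specific tripartite-plus-clique shape taken at a vertex of $C_n$, and then to glue these local partitions into a single tripartition of $V(G^*)$. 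Counting edges of each of the four $C_n$-types gives closeness of $G^*$ to $C_n$, which transfers to $\varepsilon$-nearness of $G$ to $C_n$ after absorbing the $o(n^3)$ edits introduced by the removal step.
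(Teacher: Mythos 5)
First, note that the paper does not prove this statement at all: it is quoted verbatim from Pikhurko~\cite{PikhurkoK43} and used as a black box in the proof of Theorem~\ref{K43stability}, so what you are really attempting is a reconstruction of Pikhurko's stability argument. As such, your outline follows the generally accepted strategy (flag-algebra tightness, induced removal, structure recovery), but it has two genuine gaps. The first is the claim that ``unpacking'' Razborov's SDP certificate yields, for \emph{every} fixed $k$, that every $k$-vertex configuration not embeddable in $C_n$ has induced density $o_\delta(1)$. A sum-of-squares certificate lives on a bounded number of vertices; near-extremality forces the specific squares in that certificate to be small, which gives information about particular flag densities of that bounded size, not about an arbitrary finite family chosen after the fact. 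You would either need a separate (and not automatically tight) computation for your family $\mathcal{F}$, or a combinatorial argument deducing the vanishing of larger configurations from the small ones --- and the latter is not automatic, since a large non-embeddable $F$ can in principle have all of its small induced subgraphs embeddable in $C_n$. You also do not establish that the hereditary class of induced subgraphs of the $C_n$'s is characterized by finitely many forbidden induced configurations, which your choice of $\mathcal{F}$ presupposes.

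The second and larger gap is the structure-recovery step, which you correctly identify as the heart of the matter but then only gesture at: ``force each $L(v)$ into the tripartite-plus-clique shape and glue these local partitions'' is a plan, not a proof, and gluing link partitions across vertices is exactly where such arguments tend to break (different vertices give a priori incompatible tripartitions). A route that actually closes this step --- and is the one the authors of this paper use for the analogous $\ell_2$ statement in an alternative proof of Theorem~\ref{K43stability}, and is close in spirit to Pikhurko's own argument --- is to work only with configurations on at most $6$ vertices: after the removal step, locate four vertices $a,b,c,d$ spanning exactly two edges, define $V_1,V_2,V_3$ explicitly by the link pattern of each vertex to $\{a,b,c,d\}$, show that every vertex falls into one of these classes (any exception creates a forbidden $\leq 5$-vertex induced configuration), and then verify independence of the parts and the absence of wrongly oriented edges using forbidden $4$- and $6$-vertex configurations, finishing with an edge count to pin down the part sizes. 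Until you either carry out such a cleaning argument or supply a genuine gluing lemma, the proposal does not constitute a proof of Theorem~\ref{K43stabilityPik}.
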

\begin{proof}[Proof of Theorem~\ref{K43stability}]
Let $\varepsilon>0$ be fixed. We choose $n_0$ sufficiently large for the following proof to work. We will choose constants \begin{align*}
    1 \gg \varepsilon \gg \delta_3 \gg \delta_2 \gg \delta_1 \gg \delta \gg 0 
\end{align*}
in order from left to right where each constant is a sufficiently small positive number depending only on the previous ones. Let $G$ be a $K_4^3$-free $3$-uniform hypergraph on $n\geq n_0$ vertices with \begin{align*}
\textup{co}_2(G)\geq \left(\frac{1}{3}-\delta \right) \frac{n^4}{2}.
\end{align*} 
By applying Lemma~\ref{K43flag}, we get that the density of the 4-vertex 3-graph with exactly one edge in $G$ is at most $\delta_1$. Now, we apply the induced hypergraph removal lemma, Theorem~\ref{removallemma}, to obtain $G'$ where $G'$ is $\delta_2$-near to $G$, and $G'$ is $K_4^3$-free and does not induce exactly one edge on four vertices. We have
\begin{align*}
\textup{co}_2(G')\geq \textup{co}_2(G)- 6\delta_2 n^4\geq  \left(\frac{1}{3}-\delta \right) \frac{n^4}{2} -6\delta_2 n^4 \geq (1-37\delta_2)\frac{1}{6}n^4,
\end{align*}
where the first inequality holds because when one edge is removed from a $3$-uniform hypergraph, then the codegree squared sum can go down by at most $6n$. By a result of Falgas-Ravry and Vaughan~\cite[Theorem 4]{MR2988862}, $P(K_4^{3-},G') \leq16/27+o(1)$. Let $x\in [0,1]$ such that $P(K_4^{3-},G')=16/27(1-x)+o(1)$. By \eqref{eq:fa} and the fact that $G'$ is $K_4^3$-free, we have 
\begin{align*}
    \frac{1}{3}(1-37\delta_2)\leq \frac{\textup{co}_2(G')}{\binom{n}{2}(n-2)^2}=\frac{1}{6}P(K_4^{3=},G') + \frac{1}{2}P(K_4^{3-},G')\leq \frac{1}{6}P(K_4^{3=},G') +\frac{8}{27}(1-x)+\delta_2.
\end{align*}
Thus, 
\begin{align}
\label{eq:K4G'}
P(K_4^{3=},G')\geq \frac{2+16x}{9}-80\delta_2.
\end{align}
Since $G'$ does not contain a $4$-set spanning exactly $1$ or $4$ edges, a result of Razborov~\cite{RazbarovK43} says
\begin{align}
\label{pik}
 \frac{|E(G')|}{\binom{n}{3}}\leq \frac{5}{9}+o(1).
\end{align}
The edge density can also expressed as
\begin{align}
\label{edgedensityg'}
    \frac{|E(G')|}{\binom{n}{3}}=  \frac{1}{2}P(K_4^{3=},G')+\frac{3}{4}P(K_4^{3-},G')+o(1).
\end{align}
By combining \eqref{eq:K4G'} and \eqref{edgedensityg'} we get
\begin{align*}
\label{}
 \frac{|E(G')|}{\binom{n}{3}}\geq  \frac{1}{2}P(K_4^{3=},G')+\frac{3}{4}P(K_4^{3-},G')-\delta_2\geq \frac{5+4x}{9}-41\delta_2.
\end{align*}
This implies that $x\leq 100\delta_2$. Thus, by Pikhurko's stability theorem (Theorem~\ref{K43stabilityPik}), $G'$ is $\delta_3$-near to $C_n$. Since $G'$ is $\delta_2$-near to $G$, we conclude that $G$ is $\varepsilon$-near to $C_n$. 
\end{proof}

\subsection{Proof of Theorem~\ref{K53stability}}
Flag algebras give us the following for $K_5^3$.  
\begin{lemma}
\label{K53flag}
For all $\varepsilon>0$ there exists $\delta>0$ and $n_0$ such that for all $n\geq n_0$: if $G$ is a $K_5^3$-free $3$-uniform graph on $n$ vertices with $\textup{co}_2(G)\geq (1-\delta)\frac{5}{8} n^4/2$, then the densities of all $3$-graphs on $4,5$ and $6$ vertices in $G$ that are not contained in $B_n$ are at most $\varepsilon$. In particular, 
\begin{align*}
    \sigma(K_5^3)=\frac{5}{8}.
\end{align*}
\end{lemma}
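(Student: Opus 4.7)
The plan is to follow the flag algebra strategy used for Lemma~\ref{K43flag}, now applied to $K_5^3$-free $3$-graphs. By identity \eqref{eq:fa}, for any convergent sequence of $K_5^3$-free $3$-graphs $(G_n)$ the quantity $\textup{co}_2(G_n)/(\binom{n}{2}(n-2)^2)$ converges to the same fixed linear combination of densities of the four $4$-vertex $3$-graphs, with all of them now permitted since only $K_5^3$ is forbidden. The matching lower bound $\sigma(K_5^3) \geq 5/8$ comes from a direct computation on $B_n$: pairs inside a part have codegree $\approx n/2$ while pairs across the bipartition have codegree $\approx n$, so summing gives $\textup{co}_2(B_n) = (5/8 + o(1))\binom{n}{2}(n-2)^2$.

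For the upper bound $\sigma(K_5^3) \leq 5/8$ I would set up a semidefinite program by choosing a collection of types $\sigma$ on small labelled vertex sets (of sizes $1$, $2$, $3$, and perhaps $4$) together with corresponding $\sigma$-flags on slightly larger ground sets, then impose Cauchy--Schwarz positivity constraints of the form $\llbracket f^{T} Q_\sigma f \rrbracket_\sigma \geq 0$ for positive semidefinite matrices $Q_\sigma$. CSDP would be used to solve the program numerically, and SageMath to round the output to an exact rational certificate matching $5/8$. Together with the explicit construction $B_n$, this pins down the value $\sigma(K_5^3) = 5/8$.

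The stability conclusion is extracted by re-examining the same rounded certificate. In any near-extremal sequence the slack in every SDP inequality must tend to zero, so each $4$-vertex $3$-graph whose coefficient in the certificate is strictly positive is forced to have density $o(1)$ in $G_n$. Applying the same logic to the Cauchy--Schwarz squares associated with the types of size $2$ and $3$ forces the densities of $5$- and $6$-vertex subgraphs absent from $B_n$ to vanish as well. The task is then to verify by inspection of the certificate that every $4$-, $5$-, and $6$-vertex $3$-graph which is not induced in $B_n$ actually receives strictly positive weight.

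The principal obstacle will be choosing the set of types and flags carefully enough that (i) the numerical SDP is tight at $5/8$ rather than some weaker bound, and (ii) after rounding, every small subgraph missing from $B_n$ appears with strictly positive coefficient, which is exactly what the stability statement demands. Unlike the $K_4^3$ case, where one could lean on Pikhurko's stability theorem to bypass listing the forbidden $5$- and $6$-vertex subgraphs, for $K_5^3$ no such black-box stability result is available, so the flag algebra certificate itself must supply the full list. All of this is computer assisted; the supporting files are available at \oururl.
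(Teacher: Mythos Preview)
Your proposal is correct and matches the paper's approach: the paper states only that the proof of Lemma~\ref{K53flag} is a computer-assisted flag algebra calculation with supporting files available at the given URL, and your outline of the SDP setup, rounding, and extraction of stability from the slack in the rounded certificate is exactly the standard methodology behind such a calculation. One small inaccuracy in your closing aside: Lemma~\ref{K43flag} as stated in the paper \emph{also} asserts the $5$- and $6$-vertex density bounds and is proved purely by flag algebras, so the $K_4^3$ certificate carries the same burden; Pikhurko's theorem enters only downstream, in the proof of Theorem~\ref{K43stability}.
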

Again, the flag algebra calculation proving Lemma~\ref{K53flag} is computer assisted and available at \oururl. We use this result to prove Theorem~\ref{K53stability}.  

\begin{proof}[Proof of Theorem~\ref{K53stability}]
Let $\varepsilon>0$. During the proof we will use the following constants: 
\begin{align*}
    1 \gg \varepsilon \gg \delta_2 \gg \delta_1 \gg \delta \gg 0. 
\end{align*}
The constants are chosen in this order and each constant is a sufficiently small positive number depending only on the previous ones. Apply Lemma~\ref{K53flag} and get $\delta=\delta(\delta_1)>0$ such that for all $n$ large enough: If $G$ is a $K_5^3$-free $3$-uniform graph on $n$ vertices with $\textup{co}_2(G)\geq (1-\delta)\frac{5}{8} n^4/2$, then the densities of all $3$-graphs on $4,5$ and $6$ vertices in $G$ that are not contained in $B_n$ are at most $\delta_1$.

Now, apply the induced hypergraph removal lemma Theorem~\ref{removallemma} to obtain $G'$ where $G'$ is $\delta_2$-near to $G$, and $G'$ contains only those induced subgraphs on $4,5$ or $6$ vertices which appear as induced subgraphs in $B_n$. Note that 
\begin{align*}
\textup{co}_2(G')\geq \textup{co}_2(G')- 6\delta_2 n^4\geq  (1-\delta)\frac{5}{8}\frac{n^4}{2} -6\delta_2 n^4 \geq (1-20\delta_2)\frac{5}{8}\frac{n^4}{2},
\end{align*}
because when one edge is removed the codegree squared sum can go down by at most $6n$. Next we show that $G'$ has to have the same structure as $B_n$. We say that a $3$-graph $H$ is \emph{$2$-colourable}, if there is a partition of the vertex set $V(H)=V_1 \cup V_2$ such that $V_1$ and $V_2$ are independent sets in $H$.
\begin{claim}
\label{H2color}
$G'$ is $2$-colourable.
\end{claim}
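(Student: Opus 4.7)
The plan is to extract a bipartition of $V(G')$ anchored at an arbitrary edge $xyz \in E(G')$, using the forbidden-subgraph structure of $G'$. Since every $4$-vertex induced subgraph of $B_n$ has $0$, $3$, or $4$ edges, every $5$-vertex induced subgraph of $B_n$ has $0$, $6$, or $9$ edges, and $G'$ is $K_5^3$-free, the cleaning step forces these edge-count restrictions on all corresponding induced subgraphs of $G'$.

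Fix any edge $xyz \in E(G')$ and, for each $w \in V(G') \setminus \{x,y,z\}$, record which of the three triples $xyw, xzw, yzw$ lies in $E(G')$. Applying the ``$0,\,3,\,4$ edges'' rule to the $4$-set $\{x,y,z,w\}$ forces at least two of these three triples to be edges, so $V(G') \setminus \{x,y,z\}$ partitions into four classes $V_x, V_y, V_z, V_{xyz}$, where the subscript names the ``apex'' vertex common to the two edges (for instance, $V_z$ consists of those $w$ with $xzw, yzw \in E(G')$ and $xyw \notin E(G')$, while $V_{xyz}$ consists of those $w$ for which all three triples are edges).

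Next I show that at most one of $V_x, V_y, V_z$ is nonempty. If $u \in V_x$ and $v \in V_z$, then the ``$0,\,3,\,4$ edges'' rule applied in turn to $\{x,y,u,v\}$, $\{x,z,u,v\}$, $\{y,z,u,v\}$ forces $xuv, yuv, zuv$ all to be edges. The induced $5$-vertex subgraph on $\{x,y,z,u,v\}$ then has exactly $8$ edges, which is impossible. The analogous argument handles the other two pairings, so up to relabeling I may assume $V_x = V_y = \emptyset$, and I propose the bipartition
\[
A \;:=\; \{x,y\} \cup V_z, \qquad B \;:=\; \{z\} \cup V_{xyz},
\]
mirroring the two sides of a bipartite $3$-graph incident to the edge $xyz$.

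It remains to verify that $A$ and $B$ are both independent in $G'$. For $A$: for $w_1, w_2 \in V_z$ the $4$-set $\{x,y,w_1,w_2\}$ already contains the two non-edges $xyw_1, xyw_2$, which forces $xw_1w_2, yw_1w_2 \notin E(G')$, and then the $4$-set $\{x,w_1,w_2,w_3\}$ forces $w_1w_2w_3 \notin E(G')$. For $B$ the delicate step is to show $zv_1v_2 \notin E(G')$ whenever $v_1, v_2 \in V_{xyz}$: if $V_z \neq \emptyset$, pick $w \in V_z$, use the $4$-set rule to force $xv_1v_2, yv_1v_2, wv_1v_2 \in E(G')$, and then invoke $K_5^3$-freeness on $\{x,z,w,v_1,v_2\}$ to conclude $zv_1v_2 \notin E(G')$; the triples $v_1v_2v_3$ are then non-edges by the $4$-set $\{z,v_1,v_2,v_3\}$. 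I expect the main obstacle to be the degenerate case $V_z = \emptyset$, where no witness $w$ is available and the $4$-set rule together with $K_5^3$-freeness only forces one of $xv_1v_2, yv_1v_2, zv_1v_2$ to be a non-edge without pinning down which one. To handle this I would use a consistency argument over pairs in $\binom{V_{xyz}}{2}$: inspecting the induced $5$-vertex subgraph on $\{x,y,v_1,v_2,v_3\}$ and related $5$-sets shows that the forced non-edge is always in the same coordinate, so after possibly relabeling the roles of $x, y, z$ the bipartition $(A,B)$ works uniformly and the claim follows.
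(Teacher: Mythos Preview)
Your approach is valid but takes a different route from the paper: the paper anchors at a \emph{non-edge} $abc$ and partitions the remaining vertices into $A_0$ (no edge with two vertices in $\{a,b,c\}$) and $A_3$ (all three such triples are edges), then uses the $6$-vertex edge-count constraint ($0$, $10$, $16$, or $18$ edges) to verify both parts are independent. This is cleaner because anchoring at a non-edge avoids your case split ``at most one of $V_x,V_y,V_z$ is nonempty'' and the degenerate case $V_z=\emptyset$ entirely; the symmetry among $x,y,z$ in the edge-anchored approach creates exactly the extra work that the non-edge trick sidesteps.

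Two steps in your outline need repair. First, the $4$-set rule alone does \emph{not} force $xv_1v_2,yv_1v_2,wv_1v_2\in E(G')$: applying it to $\{x,y,v_1,v_2\}$, $\{x,w,v_1,v_2\}$, $\{y,w,v_1,v_2\}$ only shows that at least one member of each pair $\{xv_1v_2,yv_1v_2\}$, $\{xv_1v_2,wv_1v_2\}$, $\{yv_1v_2,wv_1v_2\}$ is an edge, and this is consistent with, say, $yv_1v_2\notin E(G')$. You need the $5$-set rule on $\{x,y,w,v_1,v_2\}$, which already has six edges and the non-edge $xyw$, forcing nine edges total and hence all three of $xv_1v_2,yv_1v_2,wv_1v_2$. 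Second, $K_5^3$-freeness on $\{x,z,w,v_1,v_2\}$ does not yield $zv_1v_2\notin E(G')$ until you first know $zwv_1,zwv_2\in E(G')$; this in turn comes from the $5$-set rule on $\{x,y,z,w,v_i\}$, which has eight known edges and the non-edge $xyw$, forcing $zwv_i\in E(G')$. Once these two patches are in place, your argument---including the consistency argument for the degenerate case $V_z=\emptyset$, which does work as you outline---goes through.
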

\begin{proof}
Take an arbitrary non-edge $abc$ in $G'$. For $0\leq i \leq 4$, define $A_i$ to be the set of vertices $v\in V(G)\setminus \{a,b,c\}$ such that $G'$ induces $i$ edges on $\{a,b,c,v\}$. Then, $A_1=A_2=\emptyset$ because on $4$ vertices there are either $0,3$ or $4$ edges in $B_n$, hence in $G'$ as well. Further $A_4=\emptyset$, because $abc$ is a non-edge. Clearly, $A_0$ is an independent set, because if there is an edge $v_1v_2v_3$ in $G'[A_0]$, then the induced graph of $G'$ on $\{a,b,c,v_1,v_2,v_3\}$ spans a forbidden subgraph, i.e., a hypergraph which is not an induced subhypergraph of $B_n$. Similarly, $A_3$ is an independent set, otherwise $G'$ were to contain a copy of $F_{3,3}$, which is not an induced subhypergraph of $B_n$. Let $A'=A_0 \cup \{a,b,c\}$.~Then $V(G')=A_3 \cup A'$ and $A'$ also forms an independent set. To observe the second statement, let $v_1,v_2,v_3$ be three vertices in $A_0$. The number of edges induced on $\{v_1,v_2,v_3,a,b,c\}$ is at most nine, because every edge needs to be incident to exactly two vertices of $\{a,b,c\}$ by the definition of $A_0$. However, $6$-vertex induced subgraphs of $B_n$ have either $0,10,16$, or $18$ edges. We conclude that $\{v_1,v_2,v_3,a,b,c\}$ induces no edge in $G'$. Thus, $A'$ is also an independent set in $G'$ and therefore $G'$ is $2$-colourable.  
\end{proof}
\begin{claim}
\label{edgesH'1}
We have $|E(G')|\geq (1-2\sqrt{\delta_2}) \frac{n^3}{8}$.
\end{claim}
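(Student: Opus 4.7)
The plan is to exploit the $2$-colouring $V(G') = A' \cup A_3$ established in Claim~\ref{H2color}. Write $|A'| = \alpha n$ and $|A_3| = (1-\alpha)n$, and assume without loss of generality $\alpha \leq 1/2$. Since both parts are independent, every edge of $G'$ has either two vertices in $A'$ and one in $A_3$, or vice versa. This already imposes codegree bounds: $d(x,y) \leq (1-\alpha)n$ for $\{x,y\} \subseteq A'$, $d(x,y) \leq \alpha n$ for $\{x,y\} \subseteq A_3$, and $d(x,y) \leq n-2$ for a cross-pair.

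The first step is to force $\alpha$ close to $1/2$. Summing the squared codegree bounds above yields
\[
\textup{co}_2(G') \leq \binom{\alpha n}{2}\bigl((1-\alpha)n\bigr)^2 + \binom{(1-\alpha)n}{2}(\alpha n)^2 + \alpha(1-\alpha)n^2(n-2)^2 = \alpha(1-\alpha)\bigl(\alpha(1-\alpha)+1\bigr)n^4 + O(n^3).
\]
Setting $t := \alpha(1-\alpha) \in [0,1/4]$, the function $t(1+t)$ attains its unique maximum $5/16$ at $t = 1/4$. The hypothesis $\textup{co}_2(G') \geq (1-20\delta_2)(5/16)n^4$ combined with an elementary bound on $t(1+t)$ near $t = 1/4$ then gives $t \geq 1/4 - O(\delta_2)$, and since $\alpha(1-\alpha) = 1/4 - (\alpha - 1/2)^2$, this yields $|\alpha - 1/2| \leq c_1 \sqrt{\delta_2}$ for an absolute constant $c_1$.

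The second step converts the codegree squared lower bound into an edge-count lower bound via the weight identity
\[
\textup{co}_2(G') = \sum_{xyz \in E(G')} \bigl(d(x,y) + d(x,z) + d(y,z)\bigr).
\]
An edge $xyz$ with two vertices in $A'$ has weight at most $(1-\alpha)n + 2(n-2)$, and an edge of the opposite type has weight at most $\alpha n + 2(n-2)$, which is no larger since $\alpha \leq 1/2$. Therefore $\textup{co}_2(G') \leq (3-\alpha)n \cdot |E(G')|$. Inserting the bound $\alpha \geq 1/2 - c_1\sqrt{\delta_2}$ from the first step gives
\[
|E(G')| \;\geq\; \frac{\textup{co}_2(G')}{(3-\alpha)n} \;\geq\; \frac{(1-20\delta_2)(5/16)n^4}{(5/2 + c_1\sqrt{\delta_2})\,n} \;\geq\; (1 - 2\sqrt{\delta_2})\,\frac{n^3}{8},
\]
which is the desired inequality; the final step holds for $\delta_2$ sufficiently small since $20\delta_2 = o(\sqrt{\delta_2})$.

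The main subtlety to watch is that the deviation of $\alpha$ from $1/2$ scales like $\sqrt{\delta_2}$ rather than $\delta_2$, because $\alpha(1-\alpha)(\alpha(1-\alpha)+1)$ has a smooth interior maximum at $\alpha = 1/2$ with vanishing first derivative. This square-root loss is exactly what is reflected in the $2\sqrt{\delta_2}$ slack in the claim, and verifying that the numerical constants in the two applications (one to pin down $\alpha$, one to extract the edge count) balance to yield the clean factor $2$ is routine provided $\delta_2 \ll 1$.
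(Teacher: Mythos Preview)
Your proposal is correct and follows essentially the same approach as the paper: first use the $2$-colouring to upper-bound $\textup{co}_2(G')$ by a function of $\alpha(1-\alpha)$ and deduce $|\alpha-1/2|\le c\sqrt{\delta_2}$ (the paper gets the explicit constant $c=3$), then use the weight identity $\textup{co}_2(G')=\sum_{e} w_{G'}(e)$ together with the resulting bound $w_{G'}(e)\le (5/2+c\sqrt{\delta_2})n$ to extract the edge count. The only differences are cosmetic (your parametrisation via $t=\alpha(1-\alpha)$ and use of an unspecified constant $c_1$ versus the paper's direct simplification $\alpha(1-\alpha)+1\le 5/4$ and explicit $c=3$); carrying the constant through gives $c_1<5$, which is exactly what is needed for the final inequality with slack $2\sqrt{\delta_2}$.
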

\begin{proof}
By Claim~\ref{H2color}, $G'$ is $2$-colourable and we can partition the vertex set $V(G')=A\cup B$ such that $A$ and $B$ are independent sets. Let $a\in[0,1]$ such that $|A|=an$ and $|B|=(1-a)n$. We have 
\begin{align*}
    \left(1-20\delta_2\right)\frac{5}{8}\frac{n^4}{2} \leq \textup{co}_2(G') \leq \left( \frac{a^2}{2} (1-a)^2 + \frac{(1-a)^2}{2}a^2+a(1-a)\right) n^4 \leq \frac{5}{4}a(1-a)n^4. 
\end{align*}
Thus, $4a(1-a)\geq 1-20\delta_2$. We conclude        $1/2-3\sqrt{\delta_2}\leq a \leq 1/2+3\sqrt{\delta_2}$, otherwise 
\begin{align*}
    4a(1-a)< 4 \left( \frac{1}{2}-3\sqrt{\delta_2}\right) \left( \frac{1}{2}+3\sqrt{\delta_2}\right) = 1 - 36\delta_2,
\end{align*}
a contradiction. For every edge $e \in E(G')$, we have $w_{G'}(e)\leq \left(5/2 +3\sqrt{\delta_2}\right)n.$ Therefore,
\begin{align*}
    \left(1-20\delta_2\right)\frac{5}{8}\frac{n^4}{2} \leq \textup{co}_2(G')= \sum_{e\in E(G')}w_{G'}(e)\leq |E(G')|\left(\frac{5}{2} +3\sqrt{\delta_2}\right)n.
\end{align*}
Thus,
\begin{align*}|E(G')|\geq \frac{\left(1-20\delta_2\right)}{\left(1 +\frac{6}{5}\sqrt{\delta_2}\right)}\frac{n^3}{8}\geq (1-2\sqrt{\delta_2}) \frac{n^3}{8}. 
\end{align*}
\end{proof}
The $3$-graph $G$ is $\delta_2$-near to $G'$. By Claims~\ref{H2color} and \ref{edgesH'1}, $G'$ is $\varepsilon/2$-near to $B_n$. Therefore we can conclude that $G$ is $\delta_2 + \varepsilon/2 \leq \varepsilon$-near to $B_n$.   
\end{proof}

\subsection{Discussion on Cliques}
Keevash and Mubayi~\cite{Keevashsurvey} constructed the following family of $3$-graphs obtaining the best-known lower bound for the Tur\'an density of cliques. Denote by $\mathcal{D}_k$ the family of directed graphs on $k-1$ vertices that are unions of vertex-disjoint directed cycles. Cycles of length two are allowed, but loops are not. Let $D\in \mathcal{D}_k$ and $V=[n]=V_1\cup \ldots \cup V_{k-1}$ be a vertex partition with class sizes as balanced as possible, that is $||V_i|-|V_j||\leq 1$ for all $i\neq j$. Denote by $G(D)$ the $3$-graph on $V$ where a triple is a non-edge iff it is contained in some $V_i$ or if it has two vertices in $V_i$ and one vertex in $V_j$ where $(i,j)$ is an arc of $D$. The 3-graph $G(D)$ is $K_{k}^3$-free and has edge density $1-(2/t)^2+o(1)$. 
While all directed graphs $D\in\mathcal{D}_k$ give the same edge density for $G(D)$, up to isomorphism there is only one $D$ maximising the codegree squared sum $\textup{co}_2(G(D))$.
Let $D^*_k\in \mathcal{D}_k$ be the directed graph on $k-1$ vertices $v_1,\ldots,v_{k-1}$ such that if $k$ odd, then 
\begin{align*}
    (v_iv_{i+1}),(v_{i+1}v_i)\in E(D^*_k) \quad \text{for all odd $i$,}
\end{align*}
and if $k$ even, then
\begin{gather*}
    (v_iv_{i+1}),(v_{i+1}v_i)\in E(D^*_k) \quad \text{for all odd $i\leq k-5$} \\
    \text{and} \quad (v_{k-3}v_{k-2}),(v_{k-2}v_{k-1}),(v_{k-1}v_{k-3})\in E(D^*_k).
\end{gather*}
Note that $D^*_k$ is maximising the number of directed cycles. The 3-graph $G(D^*_4)$ is isomorphic to $C_n$ and $G(D^*_5)$ is isomorphic to $B_n$. See Figure~\ref{fig:Gl} for a drawing of $D^*_7,D^*_8$ and the complements $\overline{G(D^*_{7})}$ and $\overline{G(D^*_{8})}$ of $G(D^*_{7})$ and $G(D^*_{8})$, respectively. Next, we observe that among all directed graphs $D\in \mathcal{D}_k$, $D^*_k$ maximises the codegree squared sum of $G(D)$.

\begin{figure}[ht]
    \begin{center}
\tikzset{
vtx/.style={inner sep=1.1pt, outer sep=0pt, circle, fill,draw}, 
hyperedge/.style={fill=pink,opacity=0.5,draw=black}, 
vtxBig/.style={inner sep=15pt, outer sep=0pt, circle, fill=white,draw}, 
vtxBigC/.style={inner sep=12pt, outer sep=0pt, circle, fill=white,draw}, 
hyperedge/.style={fill=pink,opacity=0.5,draw=black}, 
}
\vc{
\begin{tikzpicture}[scale=1.4]
\begin{scope}[scale=1.3]
\draw (0,1) coordinate(x1) node[vtxBig]{};
\draw (0,0) coordinate(x2) node[vtxBig]{};
\draw (1,1) coordinate(x3) node[vtxBig]{};
\draw (1,0) coordinate(x4) node[vtxBig]{};
\draw (2,1) coordinate(x5) node[vtxBig]{};
\draw (2,0) coordinate(x6) node[vtxBig]{};
\end{scope}
\foreach \i/\j in {2/1,4/3,6/5}{
\draw(x\i)
+(270:0.3) coordinate(1) 
+(270+120:0.3) coordinate(2) 
+(270+240:0.3) coordinate(3) 
;
\draw[hyperedge] (1) to[bend left] (2) to[bend left] (3) to[bend left] (1);
\draw(x\j)
+(90:0.3) coordinate(1) 
+(90+120:0.3) coordinate(2) 
+(90+240:0.3) coordinate(3) 
;
\draw[hyperedge] (1) to[bend left] (2) to[bend left] (3) to[bend left] (1);
\draw (x\i)++(-0.2,-0.2)  node{$V_\i$};
\draw (x\j)++(-0.2,0.2)  node{$V_\j$};
\draw (x\i) +(-0.1,0.2) coordinate(1) 
(x\i) +(-0.3,0.2) coordinate(2) 
(x\j) +(-0.2,-0.2) coordinate(3) 
;
\draw[hyperedge] (1) to[bend right] (2) to[bend right=5] (3) to[bend right=5] (1);
\draw (x\j) +(0.1,-0.2) coordinate(1) 
(x\j) +(0.3,-0.2) coordinate(2) 
(x\i) +(+0.2,0.2) coordinate(3) 
;
\draw[hyperedge] (1) to[bend right] (2) to[bend right=5] (3) to[bend right=5] (1);
}

\begin{scope}[xshift = -5cm,yscale=1.2]
\draw (0,1) node[vtx,label=above:$v_1$](x1){};
\draw (0,0) node[vtx,label=below:$v_2$](x2){};
\draw (1,1) node[vtx,label=above:$v_3$](x3){};
\draw (1,0) node[vtx,label=below:$v_4$](x4){};
\draw (2,1) node[vtx,label=above:$v_5$](x5){};
\draw (2,0) node[vtx,label=below:$v_6$](x6){};
\foreach \i/\j in {1/2,3/4,5/6}{
\draw[-latex] (x\i) to[bend left] (x\j);
\draw[-latex] (x\j) to[bend left] (x\i);
}
\end{scope}
\draw(-4,-1) node{$D_7^*$};
\draw(1.5,-1) node{$\overline{G(D_7^*)}$};

\begin{scope}[yshift=-3.5cm]

\draw(-4,-1) node{$D_8^*$};
\draw(1.5,-1) node{$\overline{G(D_8^*)}$};

\begin{scope}[scale=1.3]
\draw (0,1) coordinate(x1) node[vtxBig]{};
\draw (0,0) coordinate(x2) node[vtxBig]{};
\draw (1,1) coordinate(x3) node[vtxBig]{};
\draw (1,0) coordinate(x4) node[vtxBig]{};
\end{scope}
\foreach \i/\j in {2/1,4/3}{
\draw(x\i)
+(270:0.3) coordinate(1) 
+(270+120:0.3) coordinate(2) 
+(270+240:0.3) coordinate(3) 
;
\draw[hyperedge] (1) to[bend left] (2) to[bend left] (3) to[bend left] (1);
\draw(x\j)
+(90:0.3) coordinate(1) 
+(90+120:0.3) coordinate(2) 
+(90+240:0.3) coordinate(3) 
;
\draw[hyperedge] (1) to[bend left] (2) to[bend left] (3) to[bend left] (1);
\draw (x\i)++(-0.2,-0.2)  node{$V_\i$};
\draw (x\j)++(-0.2,0.2)  node{$V_\j$};

\draw (x\i) +(-0.1,0.2) coordinate(1) 
(x\i) +(-0.3,0.2) coordinate(2) 
(x\j) +(-0.2,-0.2) coordinate(3) 
;
\draw[hyperedge] (1) to[bend right] (2) to[bend right=5] (3) to[bend right=5] (1);

\draw (x\j) +(0.1,-0.2) coordinate(1) 
(x\j) +(0.3,-0.2) coordinate(2) 
(x\i) +(+0.2,0.2) coordinate(3) 
;
\draw[hyperedge] (1) to[bend right] (2) to[bend right=5] (3) to[bend right=5] (1);
}

\begin{scope}[xshift=3cm,yshift=0.5*1.3cm]
\begin{scope}[rotate=90]
\draw (30:0.75) coordinate(x1) node[vtxBig]{};
\draw (150:0.75) coordinate(x2) node[vtxBig]{};
\draw (270:0.75) coordinate(x3) node[vtxBig]{};
\foreach \ashift in {30,150,270}{
\draw
(\ashift:0.4)++(\ashift+60:0.1) coordinate(1) 
++(\ashift+60:0.3) coordinate(2) 
(\ashift+120:0.4)++(\ashift+120+90:-0.2) coordinate(3)
;
\draw[hyperedge] (1) to[bend left] (2) to[bend left=5] (3) to[bend left=5] (1);
\draw
(\ashift:0.7)
+(\ashift+190:0.3) coordinate(1) 
+(\ashift+190+120:0.3) coordinate(2) 
+(\ashift+190+240:0.3) coordinate(3) 
;
\draw[hyperedge] (1) to[bend left] (2) to[bend left] (3) to[bend left] (1);
}
\draw
(30:1) node{$V_5$}
(150:1) node{$V_6$}
(270:1) node{$V_7$}
;
\end{scope}
\end{scope}

\begin{scope}[xshift = -5cm,yscale=1.2]
\draw (0,1) node[vtx,label=above:$v_1$](x1){};
\draw (0,0) node[vtx,label=below:$v_2$](x2){};
\draw (1,1) node[vtx,label=above:$v_3$](x3){};
\draw (1,0) node[vtx,label=below:$v_4$](x4){};
\draw (2,1) node[vtx,label=above:$v_5$](x5){};
\draw (2,0) node[vtx,label=below:$v_6$](x6){};
\draw (3,0.5) node[vtx,label=right:$v_7$](x7){};
\foreach \i/\j in {1/2,3/4}{
\draw[-latex] (x\i) to[bend left] (x\j);
\draw[-latex] (x\j) to[bend left] (x\i);
}
\draw[-latex] (x5) to[bend right] (x6);
\draw[-latex] (x6) to[bend right] (x7);
\draw[-latex] (x7) to[bend right] (x5);
\end{scope}

\end{scope}

\end{tikzpicture}
}

    \end{center}
    \caption{Representations of $D^*_{7},D^*_8$ and the complements $\overline{G(D^*_{7})}$ and $\overline{G(D^*_{8})}$ of $G(D^*_{7})$ and $G(D^*_{8})$, respectively.} 
    \label{fig:Gl}
\end{figure}
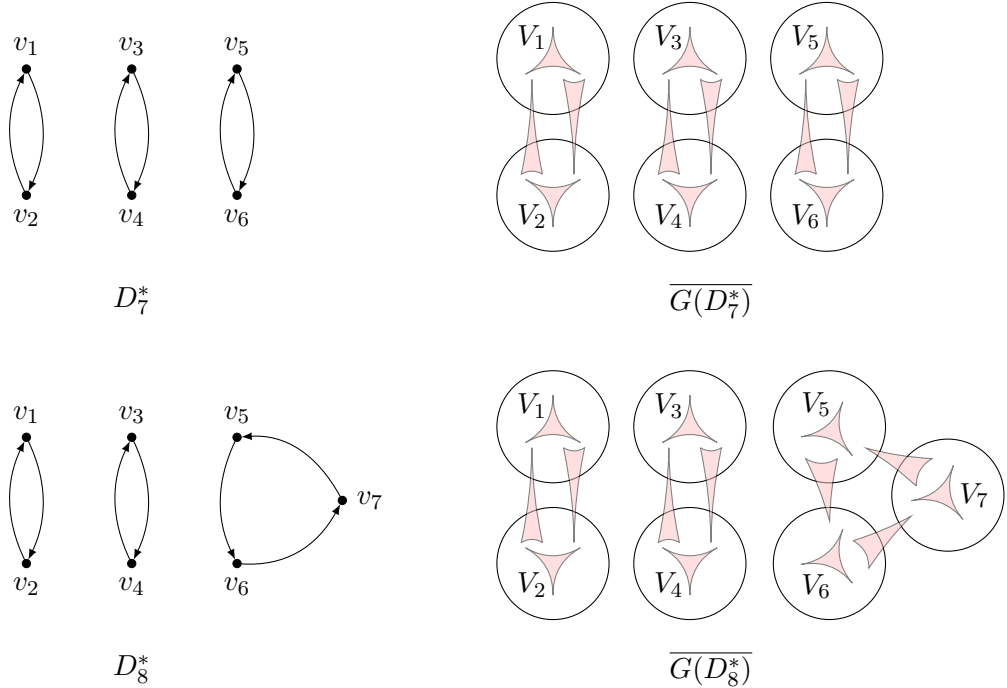
For a function $f: X \rightarrow \mathbb{R}$, and $S\subseteq X$, define 
\begin{align*}
\argmax_{x\in S}f(x):=\left\{x\in S : f(s)\leq f(x) \text{ for all } s\in S\right\}.    
\end{align*}
\begin{lemma}
\label{mubayiconstructions}
Let $k\geq 4$. For $n$ sufficiently large, $D^*_k$ is isomorphic to any directed graph in
\begin{align*}
    \argmax_{D \in \mathcal{D}_k}\textup{co}_2(G(D)).
\end{align*}
\end{lemma}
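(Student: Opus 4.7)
The plan is to compute $\textup{co}_2(G(D))$ by cases on where the pair $\{x,y\}$ sits with respect to the partition, extract its dependence on $D$, and then optimize. Let $\sigma(i)$ denote the unique out-neighbor of $i$ in $D$ (which exists since $D$ is a vertex-disjoint union of directed cycles without loops). If $x, y \in V_i$, then $xyz$ is a non-edge iff $z \in V_i$ or $z \in V_{\sigma(i)}$, giving $d(x,y) = n - |V_i| - |V_{\sigma(i)}|$. If $x \in V_i$ and $y \in V_j$ with $i \neq j$, the only non-edges $xyz$ arise when $z \in V_i \setminus \{x\}$ with $(i,j) \in E(D)$, or $z \in V_j \setminus \{y\}$ with $(j,i) \in E(D)$, so $d(x,y)$ equals $n - |V_i| - |V_j|$, $n - |V_i| - 1$, $n - |V_j| - 1$, or $n-2$ according to whether both arcs between $i$ and $j$ are present, only $(i,j)$ is, only $(j,i)$ is, or neither.

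Since every vertex of $D$ has in- and out-degree $1$ and the part sizes differ by at most $1$, the same-part contribution $\sum_i \binom{|V_i|}{2}(n - |V_i| - |V_{\sigma(i)}|)^2$ is independent of $D$ up to an $O(n^3)$ additive error. For the cross-part contribution, group the unordered pairs $\{i,j\}$ by their arc type in $D$: writing $c_2(D)$ for the number of $2$-cycles in $D$, there are $c_2(D)$ pairs where both arcs are present, $k-1-2c_2(D)$ pairs where exactly one arc is present (the edges of directed cycles of length $\geq 3$), and $\binom{k-1}{2}-(k-1)+c_2(D)$ pairs with no arc. Setting $t = \lfloor n/(k-1)\rfloor$, this yields, up to $O(n^3)$,
\begin{align*}
\textup{co}_2(G(D)) = A + t^2\Bigl[c_2 (n-2t)^2 + (k-1-2c_2)(n-t-1)^2 + \left(\binom{k-1}{2} - (k-1) + c_2\right)(n-2)^2\Bigr],
\end{align*}
where $A$ does not depend on $D$. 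The coefficient of $c_2$ inside the bracket equals $(n-2t)^2 - 2(n-t-1)^2 + (n-2)^2 = 2(t-1)^2$, by the second-difference identity for quadratics applied to the arithmetic progression $n-2,\,n-t-1,\,n-2t$. For $n$ large this is $\Theta(n^2)$, so each unit increase in $c_2$ raises $\textup{co}_2(G(D))$ by $\Theta(n^4)$, dominating the $O(n^3)$ perturbations.

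It follows that for $n$ sufficiently large, any maximizer $D$ must maximize $c_2(D)$ subject to $D$ being a vertex-disjoint union of directed cycles of length at least $2$ on $k-1$ vertices. If $k-1$ is even, the maximum $c_2 = (k-1)/2$ is attained iff $D$ is a disjoint union of $(k-1)/2$ directed $2$-cycles, which is unique up to isomorphism. If $k-1$ is odd, the vertices not covered by $2$-cycles must form directed cycles whose lengths sum to an odd number at least $3$, so $c_2 \leq (k-4)/2$, with equality iff those three uncovered vertices form a single directed $3$-cycle; again unique up to isomorphism. In both cases we recover $D^*_k$. The main technical point, routine but requiring care, is to verify that the $O(n^3)$ lower-order perturbations coming from unbalanced part sizes cannot overturn the $\Theta(n^4)$ gap between consecutive admissible values of $c_2$ once $n$ is sufficiently large, and to observe that all directed graphs realizing the optimal cycle type are indeed isomorphic to $D^*_k$.
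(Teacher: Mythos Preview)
Your proof is correct and takes a genuinely different route from the paper. The paper argues by local improvement: assuming $D$ is a maximiser, it shows that (i) any directed cycle of length $\ell\geq 4$ can be split into an $(\ell-2)$-cycle and a $2$-cycle with a strict $\Theta(n^4)$ gain in $\textup{co}_2$, and (ii) any two $3$-cycles can be replaced by three $2$-cycles with a similar gain; hence a maximiser has only $2$-cycles plus at most one $3$-cycle. You instead compute $\textup{co}_2(G(D))$ globally, observe that up to an $O(n^3)$ error it is an affine function of the single parameter $c_2(D)$ with positive leading coefficient $2t^2(t-1)^2=\Theta(n^4)$, and then maximise $c_2$ over $\mathcal{D}_k$ combinatorially. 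Your approach is slicker in that it isolates exactly which statistic of $D$ matters and handles all local moves at once; the paper's approach avoids the full codegree case analysis by only tracking the pairs whose codegree changes under a specific move. Both ultimately rest on the same convexity fact (your second-difference identity is precisely what makes each of the paper's two replacement inequalities strict).
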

\begin{proof}
 Let $D\in \argmax_{D \in \mathcal{D}}co_2(G(D))$. Suppose for contradiction that $D$ contains a directed cycle $v_1,v_2,\ldots,v_\ell$ of length $\ell\geq 4$. Construct a directed graph $D'$ by replacing that $\ell$-cycle with an $(\ell-2)$-cycle $v_1,v_4,\ldots,v_{\ell-2}$ and a 2-cycle $v_{2},v_{3}$. Let $V_1,V_2,\ldots,V_\ell$ be the corresponding classes in $G$. The only pairs of vertices $x,y$ for which the codegree changes by more than $O(1)$ are described in the following. 
 \begin{itemize}
     \item For $x\in V_{1},y\in V_{2}$, $d(x,y)$ increased from $n-n/(k-1)+O(1)$ to $n+O(1)$.
     \item For $x\in V_{3},y\in V_{4}$, $d(x,y)$ increased from $n-n/(k-1)+O(1)$ to $n+O(1)$.
     \item For $x\in V_{2},y\in V_{3}$, $d(x,y)$ decreased from $n-n/(k-1)+O(1)$ to $n-2n/(k-1)+O(1)$.
     \item For $x\in V_{1},y\in V_{4}$, $d(x,y)$ decreased from $n-n/(k-1)+O(1)$ to $n-2n/(k-1)+O(1)$ if $\ell=4$ or 
     from $n+O(1)$ to $n-n/(k-1)+O(1)$ if $\ell > 4$.
 \end{itemize}
Thus, if $\ell=4$
 \begin{align*}
     \textup{co}_2(G(D'))-\textup{co}_2(G(D))\geq O(1)+\frac{n^4}{(k-1)^2}\left(
     2- 4\left(1-\frac{1}{k-1}\right)^2+2\left(1-\frac{2}{k-1}\right)^2 
     \right)>0,
 \end{align*}
and if $\ell>4$
 \begin{align*}
     \textup{co}_2(G(D'))-\textup{co}_2(G(D))\geq O(1)+\frac{n^4}{(k-1)^2}\left(
     1- 2\left(1-\frac{1}{k-1}\right)^2+\left(1-\frac{2}{k-1}\right)^2 
     \right)>0,
 \end{align*}
a contradiction. Therefore, $D$ contains no cycle of length at least $4$. Next, towards a contradiction, suppose that $D$ contains at least two cycles of length $3$. Let $v_1,v_2,v_3$ and $v_4,v_5,v_6$ be the vertices of two 3-cycles. Let $D'$ be the directed graph constructed from $D$ by replacing those two 3-cycles with three 2-cycles $v_1,v_2$ and $v_3,v_4$ and $v_5,v_6$. Performing a similar analysis to the one above, we get that
 \begin{align*}
     \textup{co}_2(G(D'))-\textup{co}_2(G(D))=O(1)+\frac{n^4}{(k-1)^2}\left(3+3\left(1-\frac{2}{k-1}\right)^2-6\left(1-\frac{1}{k-1}\right)^2 \right)>0,
 \end{align*}
 a contradiction. Thus, we can conclude that $D$ contains at most one 3-cycle. Hence, $D$ is isomorphic to $D^*_k$.
\end{proof}
The directed graph $D_k^*$ contains a 3-cycle iff $k$ is odd.
Based on Lemma~\ref{mubayiconstructions} it seems reasonable to conjecture that in the case when $k$ is odd the hypergraph $G(D_k^*)$ could be an asymptotically extremal example in the $\ell_2$-norm. 
\begin{ques}
Let $k\geq 7$ odd and $\ell=(k-1)/2$. Is
\begin{align*}
\sigma(K_{k}^3)= \lim_{n \to \infty} \frac{\textup{co}_2(G(D^*_k))}{\binom{n}{2} (n-2)^2}= 1-\frac{2}{\ell^2} +\frac{1}{\ell^3} \ ?
\end{align*}
\end{ques}
The situation is slightly different for even $k$. In this case, it is better to consider an unbalanced version of $G(D^*_k)$ with parts of $G(D_k^*)$ corresponding to the unique 3-cycle receiving different weights to the parts involved in 2-cycles. Denote by $G^*(D^*_k)$ the $3$-graph with the largest codegree squared sum among the following 3-graphs $G$. Partition the vertex set of $G$ into $[n]=V_1\cup \ldots \cup V_{k-1}$, where the class sizes are balanced as follow:
\begin{itemize}
    \item $||V_i|-|V_j||\leq 1$ for all $i\neq j$ with $i,j\leq k-4$ and
    \item $||V_i|-|V_j||\leq 1$ for all $i\neq j$ with $ k-3 \leq i,j\leq k-1$.  
\end{itemize}
Again, a triple is a non-edge in $G^*(D^*_k)$ iff it is contained in some $V_i$ or if it has two vertices in $V_i$ and one vertex in $V_j$ where $(i,j)$ is an arc of $D^*_k$.

\begin{ques}
Let $k\geq 6$ even. Is
 \begin{align*}\sigma(K_{k}^3)=\lim_{n \to \infty} \frac{\textup{co}_2(G^*(D_k^*))}{\binom{n}{2} (n-2)^2} \ ?
 \end{align*}
 \end{ques}

\section{Proof of Theorem~\ref{F33exact}}
\label{F33section}
In this section we prove Theorem~\ref{F33exact}, i.e., we determine the codegree squared extremal number of $F_{3,3}$. Flag algebras give us the following corresponding asymptotical result and also a weak stability version.

\begin{lemma}
\label{flag F33} 
For all $\varepsilon>0$ there exists $\delta>0$ and $n_0$ such that for all $n\geq n_0$: if $G$ is an $F_{3,3}$-free 3-uniform graph on $n$ vertices with $\textup{co}_2(G)\geq (1-\delta)\frac{5}{8} n^4/2$, then the densities of all 3-graphs on $4,5$ and $6$ vertices in $G$ that are not contained in $B_n$ are at most $\varepsilon$. Additionally, 
\begin{align*}
    \sigma(F_{3,3})=\frac{5}{8}.
\end{align*}
\end{lemma}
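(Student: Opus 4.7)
The plan is to apply the flag algebra framework set up in Section~\ref{sec:flaga}. First I would verify the lower bound $\sigma(F_{3,3}) \geq 5/8$ by computing $\textup{co}_2(B_n)$ directly. With $|A|=\lfloor n/2\rfloor$ and $|B|=\lceil n/2\rceil$, the only pair codegrees are $|A|$ between two $B$-vertices, $|B|$ between two $A$-vertices, and $n-2$ across the partition, so $\textup{co}_2(B_n) = \binom{|A|}{2}|B|^2+\binom{|B|}{2}|A|^2+|A||B|(n-2)^2 = (\tfrac58+o(1))\tfrac{n^4}{2}$. One also checks that $B_n$ is $F_{3,3}$-free: the non-edge $\{4,5,6\}$ of $F_{3,3}$ forces that triple into a single part, say $A$, but then the edges $\{1,4,5\}$, $\{2,4,5\}$, $\{3,4,5\}$ force $1,2,3$ all into $B$, which contradicts that $\{1,2,3\}$ is an edge.

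Next I would run the flag algebra semidefinite program to obtain the matching upper bound, exactly as for Lemmas~\ref{K43flag} and \ref{K53flag}. Concretely, let $\mathcal{F}$ denote the family of $F_{3,3}$-free $3$-graphs. Using \eqref{eq:fa}, the question becomes one of bounding, along any convergent sequence in $\mathcal{F}$, the fixed linear combination $\tfrac16 P(K_4^{3=},G_n) + \tfrac12 P(K_4^{3-},G_n) + P(K_4^3,G_n)$ from above by $5/8$. Following the standard recipe, I fix a sufficiently large ambient vertex count $N$ (likely $N=7$) and enumerate the $F_{3,3}$-free flags over a handful of small types $\tau$. For each type one introduces a positive semidefinite matrix $M_\tau$ and subtracts the nonnegative averaged square $\llbracket F^\top M_\tau F \rrbracket_\tau$ from the target quantity. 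A numerical SDP solver such as CSDP~\cite{csdp} produces candidate matrices $M_\tau$, which are then rounded to exact rationals in SageMath~\cite{sagemath} so that the rearranged expression becomes a sum over $N$-vertex $F_{3,3}$-free $3$-graphs with nonnegative coefficients, totalling at most $5/8$. The source files at \oururl implement this computation.

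Finally, the stability clause is read off from the rounded SDP certificate. In a sequence of $F_{3,3}$-free $3$-graphs $G_n$ with $\textup{co}_2(G_n)\geq (1-\delta)\tfrac58 \tfrac{n^4}{2}$, the nonnegative linear combination produced above must be tight up to $O(\delta)$, so every small $F_{3,3}$-free $3$-graph whose coefficient in the expansion is strictly positive has density $O(\delta)$ in $G_n$. By inspecting which $F_{3,3}$-free $3$-graphs on $4$, $5$, and $6$ vertices appear as induced subgraphs of $B_n$ (namely those that are $2$-colourable in the sense that the non-edges form a union of two cliques) and verifying that all remaining ones receive positive coefficients in the SDP certificate, the stability statement follows. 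The principal obstacle is the same one the authors flag for their other clique lemmas: choosing types and flags so that the SDP both achieves the correct optimum and assigns positive slack to every small $F_{3,3}$-free $3$-graph not induced in $B_n$, and then performing the exact rational rounding; conceptually nothing beyond the machinery already used for Lemmas~\ref{K43flag} and \ref{K53flag} is required.
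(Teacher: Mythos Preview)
Your proposal is correct and follows exactly the computer-assisted flag algebra approach the paper uses (the paper gives no further argument beyond pointing to the certificate files). One small slip in your lower-bound check: the fact that $\{4,5,6\}$ is a non-edge of $F_{3,3}$ does not by itself force those vertices into a single part of $B_n$, since you only need a subgraph copy, not an induced one; instead observe by pigeonhole that two of $4,5,6$ share a part, which already forces $1,2,3$ into the other part via the edges through that pair and contradicts the edge $123$.
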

This result implies the following stability theorem.

\begin{theo}
\label{F33stability}
For every $\varepsilon > 0$ there is $\delta>0$ and $n_0$ such that if $G$ is an $F_{3,3}$-free 3-uniform hypergraph on $n \geq n_0$ vertices with $\textup{co}_2(G)\geq (1-\delta) \frac{5}{8} \frac{n^4}{2}$, then we can partition $V(G)$ as $A \cup B$ such that 
$e(A)+e(B)\leq \varepsilon n^3$ and $e(A,B)\geq \frac{1}{8} n^3-\varepsilon n^3$.

\end{theo}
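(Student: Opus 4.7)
The plan is to follow the proof of Theorem~\ref{K53stability} essentially verbatim, with $F_{3,3}$-freeness replacing $K_5^3$-freeness at the single step where it is needed. Fix a chain of constants $1\gg \varepsilon \gg \delta_2 \gg \delta_1 \gg \delta \gg 0$. First, apply Lemma~\ref{flag F33} with parameter $\delta_1$ to conclude that the induced density in $G$ of every $3$-graph on $4$, $5$ or $6$ vertices that is not an induced subgraph of $B_n$ is at most $\delta_1$. Then apply the induced hypergraph removal lemma (Theorem~\ref{removallemma}) to obtain an $F_{3,3}$-free hypergraph $G'$ that is $\delta_2$-near to $G$ and has no induced $4$-, $5$- or $6$-vertex subgraph outside the induced subgraph family of $B_n$. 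Since a single edge modification changes $\textup{co}_2$ by at most $6n$, one still has $\textup{co}_2(G')\geq (1-20\delta_2)\frac{5}{8}\frac{n^4}{2}$.

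Next, show that $G'$ is $2$-colourable by adapting Claim~\ref{H2color}. Fix any non-edge $abc$ of $G'$ and, for $0\leq i\leq 4$, let $A_i=\{v\in V(G')\setminus\{a,b,c\}:|E(G'[\{a,b,c,v\}])|=i\}$. Since the $4$-vertex induced subgraphs of $B_n$ have $0$, $3$ or $4$ edges and $abc$ is a non-edge, only $A_0$ and $A_3$ can be non-empty. The independence of $A_0$ and of $A':=A_0\cup\{a,b,c\}$ follows exactly as in the $K_5^3$ case, by ruling out $6$-vertex induced subgraphs whose edge counts fall outside $\{0,10,16,18\}$. The single point where $F_{3,3}$-freeness is used is to show that $A_3$ is independent: any edge $v_1v_2v_3\subset A_3$ together with $\{a,b,c\}$ would be a copy of $F_{3,3}$.

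With the $2$-colouring $V(G')=A\cup B$ in hand, the counting from Claim~\ref{edgesH'1} transfers directly. Writing $|A|=an$, the estimate
\begin{align*}
\textup{co}_2(G')\leq \left(\frac{a^2}{2}(1-a)^2+\frac{(1-a)^2}{2}a^2+a(1-a)\right)n^4\leq \frac{5}{4}a(1-a)n^4
\end{align*}
combined with $\textup{co}_2(G')\geq(1-20\delta_2)\frac{5}{8}\frac{n^4}{2}$ forces $a=1/2\pm O(\sqrt{\delta_2})$, and the resulting per-edge weight bound $w_{G'}(e)\leq(5/2+O(\sqrt{\delta_2}))n$ yields $|E(G')|\geq(1-2\sqrt{\delta_2})\frac{n^3}{8}$. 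Since $A$ and $B$ are independent in $G'$, every edge of $G'$ is a cross-edge, so $e_{G'}(A,B)\geq(1-2\sqrt{\delta_2})\frac{n^3}{8}$.

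Finally, transfer back to $G$: because $|E(G)\triangle E(G')|\leq\delta_2 n^3$, we obtain $e_G(A)+e_G(B)\leq\delta_2 n^3\leq\varepsilon n^3$ and $e_G(A,B)\geq e_{G'}(A,B)-\delta_2 n^3\geq\frac{n^3}{8}-\varepsilon n^3$ provided $\delta$ and $\delta_2$ are chosen sufficiently small relative to $\varepsilon$. I do not foresee any genuinely new obstacle here: the argument is essentially a transcription of the proof of Theorem~\ref{K53stability}, with Lemma~\ref{flag F33} replacing Lemma~\ref{K53flag}, and with $F_{3,3}$-freeness playing exactly the role that $K_5^3$-freeness did in the step of Claim~\ref{H2color} that forbids an edge inside $A_3$.
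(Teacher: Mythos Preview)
Your proposal is correct and follows exactly the approach the paper intends: the paper's own proof is the single sentence ``The proof is the same as the proof of Theorem~\ref{K53stability}, except instead of applying Lemma~\ref{K53flag} we apply Lemma~\ref{flag F33},'' and you have faithfully unpacked this. One small remark: in Claim~\ref{H2color} of the $K_5^3$ proof, the independence of $A_3$ is not obtained from $K_5^3$-freeness but from the fact that $G'$ has only induced subgraphs occurring in $B_n$ (an edge in $A_3$ forces a copy of $F_{3,3}$, and no $6$-vertex induced subgraph of $B_n$ contains $F_{3,3}$); so your phrase ``$F_{3,3}$-freeness playing exactly the role that $K_5^3$-freeness did'' slightly mislocates the mechanism, though your actual argument---using that $G'$ is $F_{3,3}$-free, which indeed follows from $G'\in\mathrm{Forb}_{\mathrm{ind}}(\mathcal F)$---is perfectly valid.
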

\begin{proof} 
The proof is the same as the proof of Theorem~\ref{K53stability}, except instead of applying Lemma~\ref{K53flag} we apply Lemma~\ref{flag F33}.
\end{proof}
We now determine the exact extremal number by using the stability result, Theorem~\ref{F33stability}, and a standard cleaning technique, see for example \cite{Fanoplane,PikhurkoK43,MubayiF5,FanoFuredi2}. To do so we will first prove the statement under an additional universal minimum-degree-type assumption.

\begin{theo}
\label{F33exactmindeg}
There exists $n_0$ such that for all $n\geq n_0$ the following holds. Let $G$ be an $F_{3,3}$-free $n$-vertex $3$-graph such that 

\begin{align}
\label{minqassumption}
    q(x):= \sum_{y\in V,y\neq x} d(x,y)^2 + 2\sum_{\{v,w\}\in E(L(x))}d(v,w) \geq \frac{5}{4}n^3-6n^2=:d(n)
\end{align}
for all $x\in V(G)$. Then, 
\begin{align*}
 \textup{co}_2(G) \leq \textup{co}_2(B_n)= \binom{\left\lceil\frac{n}{2}\right\rceil}{2} \left\lfloor \frac{n}{2} \right\rfloor^2+\binom{\left\lfloor\frac{n}{2}\right\rfloor}{2} \left\lceil \frac{n}{2} \right\rceil^2+ \left\lceil \frac{n}{2} \right\rceil \left\lfloor\frac{n}{2}\right\rfloor (n-2)^2.
 \end{align*}
Furthermore, $B_n$ is the unique such $3$-graph $G$ satisfying $\textup{co}_2(G)=\textup{exco}_2(n,F_{3,3})$. \end{theo}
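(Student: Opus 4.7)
The plan is to reduce $G$ to a spanning subgraph of a complete bipartite $3$-graph on two parts of sizes $a+b=n$; denote this $3$-graph by $B_{a,b}$, so that $B_n=B_{\lfloor n/2\rfloor,\lceil n/2\rceil}$. The result then follows immediately: removing an edge from a $3$-graph strictly decreases $\textup{co}_2$, and a direct calculation gives
\begin{align*}
\textup{co}_2(B_{a,b})=\binom{a}{2}b^2+\binom{b}{2}a^2+ab(n-2)^2=ab\left(ab+n^2-\tfrac{9n}{2}+4\right),
\end{align*}
a strictly increasing function of the product $ab$, hence strictly maximized over $a+b=n$ at the balanced split $\{a,b\}=\{\lfloor n/2\rfloor,\lceil n/2\rceil\}$. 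Combining these two observations yields $\textup{co}_2(G)\leq \textup{co}_2(B_n)$ with equality if and only if $G=B_n$.

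\textbf{From the hypothesis to stability.} First I would verify the identity $\sum_{x\in V(G)}q(x)=4\,\textup{co}_2(G)$: the first part contributes $\sum_x\sum_{y\neq x}d(x,y)^2=2\,\textup{co}_2(G)$ since each codegree squared is counted from both endpoints, while the second contributes $2\sum_{e\in E(G)}w_G(e)=2\,\textup{co}_2(G)$ after changing the order of summation (for each edge $e=\{x,v,w\}$, letting $x$ range over the three vertices of $e$ produces the contributions $d(v,w)+d(x,w)+d(x,v)=w_G(e)$, and $\sum_e w_G(e)=\textup{co}_2(G)$ by another double count). Summing the hypothesis therefore gives $\textup{co}_2(G)\geq \tfrac{n\cdot d(n)}{4}=\tfrac{5n^4}{16}-\tfrac{3n^3}{2}$, which for $n_0$ sufficiently large is within a $(1-\delta)$-factor of $\tfrac{5}{8}\cdot\tfrac{n^4}{2}$ for any prescribed $\delta>0$. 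Theorem~\ref{F33stability} then produces a partition $V(G)=A\cup B$ with $e(A)+e(B)\leq\varepsilon n^3$ and $e(A,B)\geq\tfrac{n^3}{8}-\varepsilon n^3$; using $e(A,B)\leq\tfrac{(n-2)|A||B|}{2}$ one further obtains $\bigl||A|-|B|\bigr|=O(\sqrt{\varepsilon}\,n)$ and $e^c(A,B)=O(\varepsilon n^3)$.

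\textbf{Cleaning and eliminating intra-class edges.} I would then iteratively swap any vertex whose present side contributes more to the symmetric difference $|E(G)\triangle E(B_{|A|,|B|})|$ than its opposite side would; each swap strictly decreases this edit distance, so the procedure terminates, and the global deficit stays $O(\varepsilon n^3)$. Since $G$ itself is unchanged by relabeling, the per-vertex hypothesis $q(x)\geq d(n)$ is preserved, and combining it with the stabilized partition yields per-vertex bounds on the number of missing cross-triples through each vertex. Now suppose towards a contradiction that some edge $\{a_1,a_2,a_3\}\subseteq A$ persists. Form the auxiliary graph $H$ on $B$ whose edges are those pairs $\{b,b'\}$ satisfying $\{a_i,b,b'\}\in E(G)$ for all $i\in\{1,2,3\}$: the global and per-vertex bounds together force $H$ to have density $1-o(1)$, hence $H$ contains a triangle $\{b_1,b_2,b_3\}$. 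Together with $\{a_1,a_2,a_3\}$ and the nine triples $\{a_i,b_j,b_k\}$ this exhibits a copy of $F_{3,3}$, contradicting $F_{3,3}$-freeness of $G$. A symmetric argument eliminates intra-class edges in $B$, giving $G\subseteq B_{|A|,|B|}$, and the overall strategy concludes the proof.

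\textbf{Main obstacle.} The delicate part is combining the cleaning step with the per-vertex hypothesis to rule out ``atypical'' vertices that a purely global stability argument cannot control, and then extracting the triangle in $H$ needed to produce the forbidden $F_{3,3}$. The numerical value $5n^3/4-6n^2$ in the hypothesis is calibrated so that $B_n$ itself satisfies the condition with a $\Theta(n^2)$ slack --- just enough to push the per-vertex bounds in the $F_{3,3}$-extraction through for all sufficiently large $n$.
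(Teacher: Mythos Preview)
Your overall strategy and the derivation of stability are correct and match the paper's route. The identity $\sum_x q(x)=4\,\textup{co}_2(G)$ is a clean way to reach the threshold for Theorem~\ref{F33stability}; the paper instead simply assumes $\textup{co}_2(G)\geq\textup{co}_2(B_n)$ (harmless, since otherwise there is nothing to prove) before invoking stability and then minimizes $e(A)+e(B)$ over partitions as you do.

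The gap is in your elimination of intra-class edges. You assert that the per-vertex hypothesis ``yields per-vertex bounds on the number of missing cross-triples through each vertex,'' and then that the common link $H=\bigcap_i L_B(a_i)$ has density $1-o(1)$ and hence contains a triangle. Neither step is justified. The hypothesis $q(x)\geq d(n)$ does \emph{not} force every vertex to be typical: a vertex $x\in A$ can have $|L_{A,B}^c(x)|\geq\sqrt{\varepsilon}\,n^2$ and still satisfy $q(x)\geq d(n)$ if its link inside $A$ or $B$ is large enough. So when the putative edge $\{a_1,a_2,a_3\}\subseteq A$ involves an atypical vertex, $H$ need not be dense and your triangle extraction fails. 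Your ``triangle in $H$'' argument is exactly the paper's easy Claim (that $A\setminus J_A$ is independent), but it only covers the case where all three $a_i$ are typical.

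The paper handles the remaining cases differently and this is where the real work is. It defines junk sets $J_A,J_B$ of atypical vertices, shows $|J_A|,|J_B|\leq 5\sqrt{\varepsilon}\,n$, and then proves two quantitative claims by directly upper-bounding $q(a_1)$ and contradicting~\eqref{minqassumption}: first, no edge has one vertex in $J_A$ and two in $A\setminus J_A$ (an involved calculation exploiting that the common link must miss a large independent set, leading to $q(a_1)\leq(\tfrac{2}{3\sqrt3}+\tfrac34+o(1))n^3<d(n)$); second, for any remaining edge in $A$ the triangle-free structure of $L_B(a_1)\cap L_B(a_2)\cap L_B(a_3)$ forces some $|L_B(a_i)|\leq\tfrac{5}{12}|B|^2$, and since by the previous steps $|L_A(a_i)|\leq|J_A||A|$, one again gets $q(a_i)<d(n)$. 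You correctly flag this as the main obstacle, but your sketch does not indicate how to carry it out, and the mechanism you describe (per-vertex missing-triple bounds feeding into a dense $H$) is not the one that works.
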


\begin{proof}
Let $G$ be a $3$-uniform $F_{3,3}$-free hypergraph which has a codegree squared sum at least $\textup{co}_2(G)\geq \textup{co}_2(B_n)$ and satisfies 
\eqref{minqassumption}. Choose $\varepsilon=10^{-10}$ and apply Theorem~\ref{F33stability}. We get a vertex partition $A \cup B$ with $e(A)+e(B)\leq \varepsilon n^3$ and $e^c(A,B)\leq \varepsilon n^3$. Among all such partitions choose one which minimises $e(A)+e(B)$. 
We can assume that $|L_B(x)|\geq |L_A(x)|$ for all $x\in A$ and $|L_A(x)|\geq |L_B(x)|$ for all $x\in B$, as otherwise we could switch a vertex from one class to the other class and strictly decrease both $e(A)+e(B)$ and $e^c(A,B)$, a contradiction. This is not possible, because we chose $A$ and $B$ minimising $e(A)+e(B)$. We start by making an observation about the class sizes. 

\begin{claim}
\label{classsizes2}
We have
\begin{align*}
\left\lvert |A|-\frac{n}{2} \right\lvert \leq 2\sqrt{\varepsilon}n  \quad \text{and} \quad \left\lvert |B|-\frac{n}{2} \right\lvert \leq 2\sqrt{\varepsilon}n . \quad 
\end{align*}
\end{claim}
\begin{proof}
Assume that $|A|< n/2 - 2\sqrt{\varepsilon}n$. Then, we have 
\begin{align*}
    e(A,B)&\leq \binom{|A|}{2}|B|+|A|\binom{|B|}{2}\leq \frac{1}{2}|A|(n-|A|)n \\
    &<\frac{1}{2}\left( \frac{n}{2}-2\sqrt{\varepsilon}n\right)\left( \frac{n}{2}+2\sqrt{\varepsilon}n\right)n < \frac{1}{8}n^3- \varepsilon n^3,
\end{align*}
a contradiction. Thus, $|A|\geq n/2-2\sqrt{\varepsilon}n$. Similarly, we get $|B|\geq n/2 - 2\sqrt{\varepsilon}n$. 
\end{proof}
Define \emph{junk} sets $J_A,J_B$ to be the sets of vertices which are not typical, i.e.,     
\begin{align*}
    J_A&:= \left\{ x\in A: |L^c_{A,B}(x)|\geq \sqrt{\varepsilon} n^2\right\} \cup \left\{ x\in A: |L_{A}(x)|\geq \sqrt{\varepsilon} n^2\right\}, \text{ and}\\
    J_B&:= \left\{ x\in B: |L^c_{A,B}(x)|\geq \sqrt{\varepsilon} n^2\right\} \cup \left\{ x\in B: |L_{B}(x)|\geq \sqrt{\varepsilon} n^2\right\}.
\end{align*}  
These junk sets need to be small.
\begin{claim}
We have $|J_A|,|J_B|\leq 5\sqrt \varepsilon n$.
\end{claim}
\begin{proof}
 Towards contradiction assume that $|J_A|>5\sqrt{\varepsilon}n$. Then the number of vertices $x\in J_A$ satisfying $|L^c_{A,B}(x)|\geq \sqrt{\varepsilon}n^2$ is at least $2\sqrt{\varepsilon}n$ or the number of vertices $x\in J_A$ satisfying $|L_{A}(x)|\geq \sqrt{\varepsilon}n^2$ is at least $3\sqrt{\varepsilon}n$. If the first case holds, then we get $e^c(A,B)>\varepsilon n^3$. In the second case we have $e(A)>\varepsilon n^3$. Both are in contradiction with the choice of the partition $A \cup B$. Thus, $|J_A|\leq 5\sqrt{\varepsilon}n$. The second statement of this claim, $|J_B|\leq 5\sqrt{\varepsilon}n$, follows by a similar argument. \end{proof}

\begin{claim}
\label{F5cleaning1}
$A\setminus J_A$ and $B\setminus J_B$ are independent sets. 
\end{claim}
\begin{proof}
If there is an edge $a_1a_2a_3$ with $a_1,a_2,a_3\in A\setminus J_A$, since all its vertices satisfy $|L_B^c(a_i)|\leq \sqrt{\varepsilon}n^2$, we can find a triangle in $L_B(a_1)\cap L_B(a_2)\cap L_B(a_3)$, call its vertices $b_1,b_2,b_3$. However, now $ \{b_1,b_2,b_3,a_1,\allowbreak a_2, a_3\}$ spans an $F_{3,3}$ in $G$, a contradiction. A similar proof gives that $B\setminus J_B$ is an independent set. 
\end{proof}
\begin{claim}
\label{typ1doesnot}
There is no edge $a_1a_2a_3$ with $a_1\in J_A$, $a_2,a_3\in A\setminus J_A$ or with $a_1\in J_B$, $a_2,a_3\in B\setminus J_B$.
\end{claim}
\begin{proof}
Let $a_1a_2a_3$ be an edge with $a_1\in J_A$, $a_2,a_3\in A\setminus J_A$. We show that $q(a_1)<d(n)$, to get a contradiction with \eqref{minqassumption}. Let $M_i$, for $i=2,3$, be the set of non-edges in $L_B(a_i)$ and $L_{A,B}(a_i)$. Set $K=L(a_1)-M_2-M_3$. Since $|M_2|,|M_3|\leq 2\sqrt{\varepsilon}n^2$, we have $|E(K)|\geq |L(a_1)|-4\sqrt{\varepsilon}n^2$.  Let 
\begin{align*}
\Delta = \frac{\displaystyle\max_{x\in A \setminus \{a_1,a_2,a_3\}} |N_K(x) \cap B|}{n}
\end{align*}
be the maximum size of a neighbourhood in the graph $K$ in $B$ of a vertex in $A$, scaled by $n$. We have $0\leq \Delta \leq |B|/n \leq 1/2 + \sqrt{\varepsilon}$. Let $z\in A \setminus \{a_1,a_2,a_3\}$ such that $|N_K(z)\cap B)|=\Delta n$.  Observe that $N_K(z) \cap B$ is an independent set in $K$, otherwise if $v,w\in N_K(z) \cap B$ with $vw\in E(K)$, then $\{v,w,z,a_1,a_2,a_3\}$ spans an $F_{3,3}$ in $G$. Now,
\begin{align}
\label{auxillaryaquation}
     \sum_{x\in V\setminus\{a_1\}} d(a_1,x)^2  &= \sum_{x\in V\setminus\{a_1\}}  \deg_{L(a_1)}(x)^2  \leq 16\sqrt{\varepsilon}n^3 + \sum_{x\in V(K)} \deg_{K}(x)^2,    
\end{align}
because for each edge removed from the link graph $L(a_1)$ the degree squared sum can go down by at most $4n$. Now, we bound the sum on the right hand side of \eqref{auxillaryaquation} from above. For $x\in A, \ \deg_k(x)\leq |A|+\Delta n$ and for $x\in N(z)\cap B,\ \deg_k(x)\leq n-\Delta n$. Thus, we get  
\begin{align}
\label{minassumptionstep4}
    & \quad \quad \sum_{x\in V\setminus\{a_1\}} d(a_1,x)^2  \leq 16\sqrt{\varepsilon}n^3+|A|(|A|+ \Delta n)^2+   \Delta n (n-\Delta n)^2+ (|B|-\Delta n)n^2   \nonumber  \\
    &\leq \left(\frac{n}{2} + 2\sqrt{\varepsilon}n\right) \left(\frac{n}{2} + 2\sqrt{\varepsilon}n+ \Delta n\right)^2 + \Delta n (n-\Delta n)^2   
    + \left(\frac{n}{2} +2\sqrt{\varepsilon}n-\Delta n\right)n^2 + 16\sqrt{\varepsilon}n^3 \nonumber \\
    &\leq n^3 \left ( \frac{1}{2} \left( \frac{1}{2} + \Delta \right)^2 + \Delta \left(1-\Delta \right)^2 + \left(\frac{1}{2}-\Delta \right) +25\sqrt{\varepsilon}    \right) 
    = n^3 \left(\frac{5}{8}+\frac{\Delta}{2} -\frac{3}{2}\Delta^2+\Delta^3 +25\sqrt{\varepsilon} \right).
\end{align}
Furthermore, we can give an upper bound for the second summand in $q(a_1)$:
\begin{align}
\label{minassumptionstep1}
    2\sum_{\{x,y\}\in E(L(a_1))}d(x,y) &\leq 8\sqrt{\varepsilon}n^3+2\sum_{\{x,y\}\in E(K)}d(x,y), 
\end{align}
where we used that for each edge removed from $G$, the sum on the left hand side in \eqref{minassumptionstep1} is lowered by at most $n$. Now, we will give an upper bound for the right hand side of \eqref{minassumptionstep1}. 
For edges $xy\in E(K[A])$ not incident to $J_A$ we have $d_G(x,y)\leq |J_A|+|B|$ because by Claim~\ref{F5cleaning1} they have no neighbour in $A\setminus J_A$. Similarly, for edges $xy\in E(K[B])$ not incident to $J_B$ we have $d_G(x,y)\leq |J_B|+|A|.$ For all other edges $xy\in E(K)$, we will use the trivial bound $d_G(x,y)\leq n$. We have 

\begin{align}
\label{minassumptionstep2}
    2\sum_{\{x,y\}\in E(L(a_1))}d(x,y) &\leq 8\sqrt{\varepsilon}n^3 + 2   \Big( e(K[A,B])n + e(K[A])(|J_A|+|B|)+ |J_A||A| n   \nonumber  \\
    &+ e(K[B]) (|J_B|+|B|)+ |J_B||B| n \Big). 
\end{align}
By the choice of our partition we have $|L_A(x_1)|\leq |L_B(x_1)|$ and thus $e(K[A])\leq e(K[B])+4\sqrt{\varepsilon}n^2$. Therefore, by upper bounding the right hand side in \eqref{minassumptionstep2} we get

\begin{align}
\label{minassumptionstep3}
    2\sum_{\{x,y\}\in E(L(a_1))}d(x,y) &\leq 2 \left(\Delta n^2 |A| + 2e(K[B])  \left(7\sqrt{\varepsilon}n+ \frac{n}{2}\right)+ 18\sqrt{\varepsilon}n^3 \right) \nonumber \\
    &\leq 2n^3\left( \frac{\Delta}{2} +  \frac{e(G[B])}{n^2} +30\sqrt{\varepsilon} \right) \nonumber \\
    &\leq 2n^3\left( \frac{\Delta}{2} +  \Delta \left(\frac{|B|}{n}-\Delta\right) + \frac{1}{4}\left(\frac{|B|}{n}-\Delta\right)^2 +30\sqrt{\varepsilon} \right) \nonumber \\
    &\leq 2n^3\left( \frac{\Delta}{2} +  \Delta \left(\frac{1}{2}-\Delta\right) + \frac{1}{4}\left(\frac{1}{2}-\Delta\right)^2 +40\sqrt{\varepsilon} \right) \nonumber \\
    &\leq n^3\left( -\frac{3}{2}\Delta^2 + \frac{3}{2}\Delta + \frac{1}{8}+ 80 \sqrt{\varepsilon}\right),
\end{align}
   where we used that $e(K[B])\leq \Delta n (|B|-\Delta n)+ \frac{(|B|-\Delta n)^2}{4}$, because $K[B]$ contains an independent set of size $\Delta n$ and is triangle-free. Now, we can combine \eqref{minassumptionstep4} and \eqref{minassumptionstep3} to upper bound $q(a_1)$.
\begin{align*}
   q(a_1) &\leq n^3 \left(\frac{5}{8}+\frac{\Delta}{2} -\frac{3}{2}\Delta^2+\Delta^3 +25\sqrt{\varepsilon} \right) + n^3\left( -\frac{3}{2}\Delta^2 + \frac{3}{2}\Delta + \frac{1}{8}+ 80 \sqrt{\varepsilon}\right) \\
   &= n^3 \left(\Delta^3-3\Delta^2+ 2\Delta +\frac{3}{4}+ 105\sqrt{\varepsilon} \right)\leq \left(\frac{2}{3\sqrt{3}}+\frac{3}{4}+105\sqrt{\varepsilon} \right) n^3 < \frac{5}{4}n^3-6n^2,
\end{align*}
contradicting \eqref{minqassumption}. In the second-to-last inequality we used that the polynomial $\Delta^3-3\Delta^2+ 2\Delta$ has its maximum in $[0,1]$ at $\Delta=1-\frac{1}{\sqrt{3}}$.
   \end{proof}
Now, we can make use of Claim~\ref{typ1doesnot} to show that there is no edge inside $A$, respectively inside $B$.   
\begin{claim}
\label{indset}
$A$ and $B$ are independent sets.
\end{claim}
\begin{proof}
Let $\{a_1,a_2,a_3\} \subset A$ span an edge. Again, $L_B(a_1) \cap L_B(a_2) \cap L_B(a_3)$ is triangle-free. Thus, $|L_B(a_1) \cap L_B(a_2) \cap L_B(a_3)|\leq |B|^2 /4$. By the pigeonhole principle, we may assume without loss of generality that $|L_B(a_1)|\leq 5|B|^2/12.$ Furthermore, by Claims~\ref{F5cleaning1} and \ref{typ1doesnot}, $|L_A(a_1)|\leq |J_A||A|\leq 5\sqrt{\varepsilon}n^2$. Again, our strategy will be to give an upper bound on $q(a_1)$. Let $L$ be the graph obtained from $L(a_1)$ by removing all edges inside $A$.
\begin{align}
   \sum_{x\in V\setminus\{a_1\}} d(a_1,x)^2  &= \sum_{x\in V\setminus\{a_1\}}  \deg_{L(a_1)}(x)^2  \leq 20\sqrt{\varepsilon }n^3+ \sum_{x\in V(L)} \deg_{L}(x)^2 \nonumber \\
    &\leq 20\sqrt{\varepsilon }n^3+ |B| n^2 + |A||B|^2 \leq n^3 \left( \frac{5}{8}+ 30\sqrt{\varepsilon}\right). \label{part1equal}
\end{align}
Furthermore, 
\begin{align}
    2\sum_{\{x,y\}\in E(L(a_1))}d(x,y) &\leq 10\sqrt{\varepsilon}n^3+ 2\sum_{xy\in E(L)}d(x,y) \nonumber \\
    &\leq 2\left(\frac{5}{12}|B|^2 \left(|A|+|J_B|\right) + 5\sqrt{\varepsilon}n^3 + |A||B|n \right) \nonumber \\
    &\leq 2n^3\left(\frac{5}{96}  + 20\sqrt{\varepsilon} + \frac{1}{4} \right)=n^3 \left(\frac{29}{48}+ 40 \sqrt{\varepsilon}\right). \label{part2equal}
\end{align}
Thus, by combining \eqref{part1equal} and \eqref{part2equal}, we give an upper bound on $q(a_1)$,
\begin{align*}
q(a_1)\leq \left( \frac{5}{8}+ 30\sqrt{\varepsilon}\right)n^3 + n^3 \left(\frac{29}{48}+ 40 \sqrt{\varepsilon}\right)=n^3\left(\frac{59}{48}+ 70\sqrt{\varepsilon} \right)< \frac{5}{4}n^3-6n^2,
\end{align*}
contradicting \eqref{minqassumption}. Therefore $A$ is an independent set. By a similar argument $B$ is also an independent set. 

\end{proof}
By Claim~\ref{indset}, $G$ is $2$-colourable. Since among all $2$-colourable $3$-graphs $B_n$ has the largest codegree squared sum, we conclude $\textup{co}_2(G)\leq \textup{co}_2(B_n)$. This completes the proof of Theorem~\ref{F33exactmindeg}.
  
\end{proof}

We now complete the proof of Theorem~\ref{F33exactmindeg} by showing that imposing the additional assumption \eqref{minqassumption} is not more restrictive.  

\begin{proof}[Proof of Theorem~\ref{F33exact}]
Let $G$ be an $n$-vertex $3$-uniform $F_{3,3}$-free hypergraph which has a codegree squared sum at least $\textup{co}_2(G)\geq \textup{co}_2(B_n)$. Set $d(n)=5/4 n^3-6n^2$ and note that $\textup{co}_2(B_n)-\textup{co}_2(B_{n-1})>d(n)+1$. We claim that we can assume that every vertex $x\in V(G)$ sa\-tis\-fies \eqref{minqassumption}.
Otherwise, we can remove a vertex $x$ with $q(x)<d(n)$ to get $G_{n-1}$ with $\textup{co}_2(G_{n-1})\geq \textup{co}_2(B_n)-d(n)\geq \textup{co}_2(B_{n-1})+1.$ By repeating this process as long as possible, we obtain a sequence of hypergraphs $G_m$ on $m$ vertices with $\textup{co}_2(G_m)\geq \textup{co}_2(B_m)+n-m$, where $G_m$ is the hypergraph obtained from $G_{m+1}$ by deleting a vertex $x$ with $q(x) \leq d(m+1)$. We cannot continue until we reach a hypergraph on $n_0=n^{1/4}$ vertices, as then $\textup{co}_2(G_{n_0})>n-n_0 > \binom{n_0}{2}(n_0-2)^2$ which is impossible. Therefore, the process stops at some $n'$ where $n\geq n'\geq n_0$ and we obtain the corresponding hypergraph $G_{n'}$ satisfying $q(x)\geq d(n')$ for all $x\in V(G_{n'})$ and $\textup{co}_2(G_{n'})\geq \textup{co}_2(B_{n'})$ (with strict inequality if $n>n'$). Hence, we can assume that $G$ satisfies $q(x)\geq d(n')$ for all $x\in V(G_{n'})$. Applying Theorem~\ref{F33exactmindeg} finishes the proof. 
\end{proof}

\section*{Acknowledgements}
We thank an anonymous referee for many useful comments and suggestions, in particular for pointing out a shorter proof of Theorem~\ref{K43stability}.

\bibliographystyle{abbrvurl}
\bibliography{codegreesquared}

\end{document}